\documentclass{amsart}
\usepackage[utf8]{inputenc}


\usepackage{amsfonts}
\usepackage{amsmath}
\usepackage{amssymb}
\usepackage{amsthm}
\usepackage{cite} 
\usepackage{dutchcal}
\PassOptionsToPackage{hyphens}{url}\usepackage[colorlinks=true,citecolor={blue},linkcolor = purple]{hyperref}
\usepackage{enumerate}
\usepackage{enumitem}
\usepackage{extarrows}
\usepackage[margin=1in]{geometry}
\usepackage{mathdots}
\usepackage{mathrsfs}
\usepackage{tikz}
\usepackage{tikz-cd}
\usepackage{todonotes}
\usepackage{verbatim}
\usepackage{mathtools} 

\usepackage{wasysym} 

\usepackage{comment} 


\interfootnotelinepenalty=10000 

\newcommand{\scr}[1]{\mathscr{#1}}

\DeclareFontFamily{U}{wncy}{}
    \DeclareFontShape{U}{wncy}{m}{n}{<->wncyr10}{}
    \DeclareSymbolFont{mcy}{U}{wncy}{m}{n}
    \DeclareMathSymbol{\Sha}{\mathord}{mcy}{"58}
    \DeclareMathSymbol{\Den}{\mathord}{mcy}{"44}
    \DeclareMathSymbol{\Num}{\mathord}{mcy}{"4E}

\theoremstyle{definition}
\newtheorem{theorem}{Theorem}[section]
\newtheorem{lemma}[theorem]{Lemma}

\newtheorem{corollary}[theorem]{Corollary}
\newtheorem{proposition}[theorem]{Proposition}

\newtheorem{definition}[theorem]{Definition}
\newtheorem{example}[theorem]{Example}
\newtheorem{remark}[theorem]{Remark}
\newtheorem{question}[theorem]{Question}

\newtheorem{situation}[theorem]{Situation}

\newtheorem{warning}[theorem]{Warning}

\newcommand{\End}{\text{End}}

\newcommand{\Afftrans}{\text{Aff}}

\newcommand{\UConf}{\text{UConf}}
\newcommand{\Conf}{\text{Conf}}

\newcommand{\OO}{\mathcal{O}}
\newcommand{\Aut}{\mathrm{Aut}}
\newcommand{\AAut}{\mathrm{Aut}}

\newcommand{\Spec}{\mathrm{Spec}\,}
\newcommand{\Proj}{\mathrm{Proj}\,}
\newcommand{\Tor}{\mathrm{Tor}}
\newcommand{\ZZ}{\mathbb{Z}}

\newcommand{\Aff}{\mathbb{A}}

\newcommand{\TMon}{\text{Twist}\Gen}

\newcommand{\Sch}[1]{(Sch/#1)}

\newcommand{\cal}[1]{\mathcal{#1}}

\renewcommand{\frak}[1]{\mathfrak{#1}}

\newcommand{\SSpec}[2]{\underline{\mathrm{Spec}}_{#1}\, #2}
\newcommand{\PProj}[2]{\underline{\mathrm{Proj}}_{#1}\, #2}

\newcommand{\localindex}{{\mathcal{i}}}
\newcommand{\indexsheaf}{{\mathcal{I}}}
\newcommand{\classicalindex}{{\mathrm{Index}}}

\definecolor{sebgreen1}{rgb}{0.019,0.317,0.149}
\definecolor{sebgreen2}{rgb}{0.784,0.952,0.780}

\newcommand{\Leo}[2][inline]{\todo[linecolor=purple,backgroundcolor=purple!25,bordercolor=purple,#1,shadow,author=Leo]{#2}} 

\renewcommand{\epsilon}{\varepsilon}


\newcommand{\e}{\epsilon}

\renewcommand{\hat}[1]{\widehat{#1}}


\newcommand{\gb}{{\mathfrak{b}}}
\newcommand{\gc}{{\mathfrak{c}}}

\newcommand{\gm}{{\mathfrak{m}}}

\newcommand{\gp}{{\mathfrak{p}}}
\newcommand{\gq}{{\mathfrak{q}}}

\newcommand{\gl}{{\mathfrak{l}}}




\def\Ocal{{\mathcal O}}



\renewcommand{\AA}{\mathbb{A}}

\newcommand{\CC}{\mathbb{C}}
\newcommand{\FF}{\mathbb{F}}
\newcommand{\GG}{\mathbb{G}}

\newcommand{\PP}{\mathbb{P}}
\newcommand{\QQ}{\mathbb{Q}}
\newcommand{\RR}{\mathbb{R}}

\counterwithin{figure}{section}

\newcommand{\stquot}[1]{{\left[ #1 \right]}}

\renewcommand{\tilde}[1]{\widetilde{#1}}

\usepackage{ifthen}
\newboolean{seriousversion}
\setboolean{seriousversion}{true}

\ifthenelse{\boolean{seriousversion}}{\excludecomment{joke}}{}

\renewcommand{\cong}{\simeq}



\newcommand{\dnd}{\nmid}
\newcommand{\id}{\mathrm{id}}

\newcommand{\ol}[1]{\overline{#1}}

\newcommand{\pr}{\mathrm{pr}}

\newcommand{\action}{\:\rotatebox[origin=c]{-90}{$\circlearrowright$}\:}

\newcommand{\Pic}{\text{Pic}}
\newcommand{\Hom}{\text{Hom}}
\newcommand{\HHom}{\underline{\text{Hom}}}
\newcommand{\Gen}{\mathcal{M}}
\newcommand{\NGen}{\mathcal{N}}
\newcommand{\PGen}{\mathbb{P}\Gen}

\newcommand{\VV}{\mathbb{V}}
\newcommand{\Sym}[1]{\text{Sym}^{#1}}

\newcommand{\WR}{\cal R}

\usepackage{stmaryrd}

\newcommand{\GL}{\text{GL}}
\newcommand{\PGL}{\text{PGL}}

\newcommand{\pb}{\ar[phantom, dr, very near start, "\ulcorner"]}

\newcommand{\jetsp}[1]{J_{#1}}

\newcommand{\DAfftrans}{\ensuremath{\Aff^k \rtimes \GG_m}}

\newcommand{\Ints}{\mathbb{Z}} 

\newcommand{\num}[1]{\langle #1 \rangle}

\newcommand{\IIsom}{\underline{\text{Isom}}}
\newcommand{\Mon}{\Gen}

\newcommand{\disc}{\operatorname{Disc}}


\makeatletter
\@namedef{subjclassname@2020}{%
  \textup{2020} Mathematics Subject Classification}
\makeatother

\title[The Scheme of Monogenic Generators II: Local Monogenicity and Twists]{The Scheme of Monogenic Generators II:\\Local Monogenicity and Twists}
\author[Sarah Arpin, Sebastian Bozlee, Leo Herr, Hanson Smith]{Sarah Arpin$^1$, Sebastian Bozlee$^2$, Leo Herr$^1$, Hanson Smith$^3$}
\date{\today}
\keywords{}
\subjclass[2020]{Primary 14D20, Secondary 11R04, 13E15}

\email{s.a.arpin@math.leidenuniv.nl}
\email{sebastian.bozlee@tufts.edu}
\email{l.s.herr@math.leidenuniv.nl}
\email{hsmith@csusm.edu}

\address{$^1$Universiteit Leiden, $^2$Tufts University, $^3$California State University San Marcos}

\begin{document}

\maketitle

\sloppy

\begin{abstract}
     This is the second paper in a series of two studying monogenicity of number rings from a moduli-theoretic perspective. By the results of the first paper in this series, a choice of a generator $\theta$ for an $A$-algebra $B$ is a point of the scheme $\mathcal{M}_{B/A}$. In this paper, we study and relate several notions of local monogenicity that emerge from this perspective. We first consider the conditions under which the extension $B/A$ admits monogenerators locally in the Zariski and finer topologies, recovering a theorem of Pleasants as a special case. We next consider the case in which $B/A$ is \'etale, where the local structure of \'etale maps allows us to construct a universal monogenicity space and relate it to an unordered configuration space. Finally, we consider when $B/A$ admits local monogenerators that differ only by the action of some group (usually $\mathbb{G}_m$ or $\mathrm{Aff}^1$), giving rise to a notion of twisted monogenerators. In particular, we show a number ring $A$ has class number one if and only if each twisted monogenerator is in fact a global monogenerator $\theta$.
\end{abstract}

\tableofcontents

\section{Introduction}\label{sec:intro}

We begin by recalling the essential points of the previous paper of this series\cite{abhs_paper_1}. Given an extension of commutative rings with identity (henceforth, rings) $B/A$, we say that that $B$ is \emph{monogenic over $A$} if there is an element $\theta \in B$ so that $B = A[\theta]$. Such an element is called a \emph{monogenerator}. Similarly, $B$ is said to be \emph{$k$-genic} over $A$
if there exists a tuple $(\theta_1, \ldots, \theta_k) \in B^k$ so that $B = A[\theta_1, \ldots, \theta_k]$. Such a tuple is a \emph{generating $k$-tuple}. We are motivated by the case of an extension of number rings $\ZZ_L/\ZZ_K$.

Such extensions of number rings are finite locally free over a Noetherian base. In fact all we need for our results are maps of schemes that are Zariski locally of this form. We gather these hypotheses into a common ``Situation" for convenience.

\begin{situation}\label{sit:gensetup_paper2}
Let $\pi : S' \to S$ be a finite locally free morphism of schemes of constant degree $n \geq 1$ with $S$ locally noetherian, and let $X \to S$ be a quasiprojective morphism (almost always $\Aff^1_S$ or $\Aff^k_S$). 
\end{situation}

In the preceding paper we prove the following representability result, which implies in particular that if $B$ is finite locally free over a Noetherian ring $A$, then there is a scheme that represents the monogenerators for $B$ over $A$.

\begin{theorem}[{{\cite[Proposition 2.3, Corollary 3.8]{abhs_paper_1}}}]
Let $\pi : S' \to S$ be as in Situation \ref{sit:gensetup_paper2}. Then

\begin{enumerate}
  \item There exists a smooth, quasiaffine $S$-scheme $\Gen_{X,S'/S}$ representing the contravariant functor on $S$-schemes
  \[
      (T \to S) \mapsto \left\{ \begin{tikzcd}
      S' \times_S T \ar[rr, "s"] \ar[rd] & & X \times_S T\ar[dl] \\
      & T 
      \end{tikzcd} \, \, \middle| \, \, s \text{ is a closed immersion.}\right\}.
  \]
  \item If $X = \Aff^1_S$, then $\Gen_{X,S'/S}$ is an affine $S$-scheme.
\end{enumerate}
\end{theorem}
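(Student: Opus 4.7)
The plan is to realize $\Gen_{X, S'/S}$ as an open subscheme of a Weil-restriction Hom-scheme. First, since $\pi$ is finite locally free and $X \to S$ is quasiprojective, the Hom functor $\HHom_S(S', X)$ sending $T$ to $\Hom_T(S' \times_S T, X \times_S T)$ is representable by a scheme, realized as the Weil restriction $\mathrm{Res}_\pi(X \times_S S')$; this representability is standard (cf.\ FGA). The desired scheme $\Gen_{X, S'/S}$ should then be the subfunctor of those morphisms $s$ that are closed immersions.

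The central step is that being a closed immersion is an open condition on the base of a family of maps. For any $T$-point $s: S' \times_S T \to X \times_S T$ of the Hom scheme, the morphism $s$ is automatically finite: the source is finite over $T$ and the target is separated, so $s$ factors as a graph (a closed immersion, by separatedness) followed by a finite projection. Hence $s$ is a closed immersion if and only if the coherent sheaf map $\Ocal_{X \times_S T} \to s_*\Ocal_{S' \times_S T}$ is surjective, which is an open condition on $X \times_S T$; since $s$ is proper, the image in $T$ of the cokernel's support is closed, and its complement is the open locus of $T$ representing the closed-immersion condition. Applying this to the universal morphism over $\HHom_S(S',X)$ yields the desired open subscheme $\Gen_{X, S'/S}$.

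Smoothness follows from the infinitesimal lifting criterion: given a square-zero thickening $T_0 \hookrightarrow T$ and a closed immersion $s_0: S' \times_S T_0 \to X \times_S T_0$, smoothness of $X \to S$ (true for $X = \Aff^k_S$) yields a local lift $s: S' \times_S T \to X \times_S T$, which is automatically a closed immersion by openness of that condition. For part (2), when $X = \Aff^1_S$, the Hom scheme identifies canonically with $\SSpec{S}{\mathrm{Sym}^\bullet (\pi_*\Ocal_{S'})^\vee}$, an affine $S$-scheme of relative dimension $n$; the monogenerator locus is the principal open where the discriminant of $1, \theta, \theta^2, \ldots, \theta^{n-1}$ is invertible, hence itself affine. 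The main obstacle I anticipate is the general quasi-affineness claim in part (1): for arbitrary quasiprojective $X$ one would argue by choosing a locally closed embedding $X \hookrightarrow \PP^N_S$ and observing that any closed immersion of the finite scheme $S'$ into $\PP^N_T$ factors, locally on $T$, through an affine chart of $\PP^N$, thereby reducing quasi-affineness of $\Gen_{X, S'/S}$ to the affine case already handled.
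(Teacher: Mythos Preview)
This theorem is not proved in the present paper; it is quoted from the companion paper \cite{abhs_paper_1}, and only the surrounding description of that proof appears here: $\Gen_{X,S'/S}$ is constructed as a quasi-compact open subscheme of the Weil restriction $\WR_{X,S'/S}$, and for $X=\Aff^1_S$ it is the distinguished open where the \emph{local index form} (the determinant of the matrix expressing $1,\theta,\dots,\theta^{n-1}$ in a chosen basis) does not vanish. Your strategy matches this exactly: represent the Hom-functor by the Weil restriction, show the closed-immersion locus is open by the cokernel-support argument, and cut out the $\Aff^1$ case by a single equation. Two small corrections and one genuine gap:

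\emph{Terminology.} What you call the ``discriminant of $1,\theta,\dots,\theta^{n-1}$'' is what the paper calls the local index form $\localindex(e_1,\dots,e_n)$; it is the determinant of the change-of-basis matrix, not the discriminant in the trace-pairing sense (the two differ by a factor involving $\disc(B/A)$). The affineness argument you give is correct once phrased this way: $\WR_{1,S'/S}\to S$ is affine and $\Gen$ is, Zariski-locally on $S$, a principal open, so $\Gen\to S$ is affine.

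\emph{Smoothness.} Your lifting argument requires $X\to S$ smooth, which you note. The statement as written here asserts smoothness for arbitrary quasiprojective $X$; this cannot be literally correct (take $n=1$, so $\Gen_{X,S'/S}\cong X$), and the companion paper presumably restricts to smooth $X$ or to $X=\Aff^k$ for that conclusion. Your caveat is appropriate.

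\emph{Quasi-affineness.} Your sketch for general $X$ does not close. Covering $\Gen_{\PP^N,S'/S}$ by the opens $\Gen_{\Aff^N,S'/S}$ (one for each affine chart) shows only that it is covered by quasi-affines, which does not imply quasi-affineness; $\PP^1$ itself is covered by affines. Indeed the same $n=1$ example shows $\Gen_{X,S'/S}$ need not be quasi-affine for general quasiprojective $X$, so either additional hypotheses are in force in \cite{abhs_paper_1} or the assertion there is weaker (e.g.\ quasiprojective rather than quasi-affine). You should flag this rather than try to push the chart argument through.
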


We write $\Gen_{k, S'/S}$ for the case in which $X = \Aff^k_S$. We call the scheme $\Gen_{k,S'/S}$ the \emph{scheme of $k$-generators}. If $k = 1$, we write $\Gen_{S'/S}$ instead of $\Gen_{1,S'/S}$ and call it the \emph{scheme of monogenerators} or \emph{monogenicity space}. If $S' = \Spec B$ and $S = \Spec A$ are affine, we may write $\Gen_{k, B/A}$ or $\Gen_{B/A}$ instead.

In the case that $S = \Spec A$, $S' = \Spec B$, and $T = \Spec C$, standard universal properties imply that the $T$-points of $\Gen_{k,S'/S}$ are in natural bijection with the generating $k$-tuples of $B \otimes_A C$ over $C$. If we assume further that $T \simeq S$, we find that
the $S$-points of $\Gen_{k,B/A}$ are in bijection with generating $k$-tuples for $B$ over $A$.

By analogy with the affine case, we therefore say that the $S$-points of $\Gen_{k,S'/S}$ are
\emph{generating $k$-tuples} and the $S$-points of $\Gen_{S'/S}$ are \emph{monogenerators} for
$S' \to S$. Such a morphism is \emph{monogenic} if a monogenerator exists.

\subsection{Equations in local coordinates}

The scheme $\Gen_{1,S'/S}$ has a simple description in local coordinates on $S$ which we recall so that we may use it in computations. We start by noticing that $\Gen_{1,S'/S}$ is naturally a subscheme of another moduli scheme, the Weil Restriction.

\begin{definition}
Let $\pi : S' \to S$ be as in Situation \ref{sit:gensetup_paper2}. The \emph{Weil Restriction} of $X \times_S S'$ to $S$, denoted $\WR_{X,S'/S}$, is the scheme (unique up to isomorphism) which represents the contravariant functor
\[
  (T \to S) \mapsto \left\{ \begin{tikzcd}
      S' \times_S T \ar[rr, "s"] \ar[rd] & & X \times_S T\ar[dl] \\
      & T 
      \end{tikzcd} \right\}
\]
on $S$-schemes.
\end{definition}

We abbreviate $\WR_{X,S'/S}$ in a parallel fashion to $\Gen_{X,S'/S}$. It is proven in \cite[Theorem 1.3, Proposition 2.10]{weilrestnpatmcfaddin} that the Weil Restriction exists and is a quasiprojective $S$-scheme.
We prove in \cite[Proposition 2.3]{abhs_paper_1} that the natural map $\Gen_{X,S'/S} \to \WR_{X,S'/S}$ is a quasi-compact open immersion.

In the case that $B$ is a finite free Noetherian $A$-algebra with $A$-basis $e_1, \ldots, e_n$, things are simpler. It is easy to check that
\[
  \mathbb{A}^n_S \cong \WR_{1,S'/S}
\]
via the isomorphism sending $(x_1,\ldots,x_n)$ to the unique map $S' \times_S T \to \mathbb{A}^1_T = \Spec \mathscr{O}_{T} [t]$ over $T$ sending $t$ to $x_1e_1 + \cdots + x_ne_n$.

\begin{definition} \label{def:local_coords}
Suppose $B$ is a finite free Noetherian $A$-algebra with $A$-basis $e_1, \ldots, e_n$. Let $a_{ij}$ for $1 \leq i, j \leq n$ be the unique elements of $A[x_1, \ldots, x_n]$ so that we have
\[
  (x_1e_1 + \cdots + x_ne_n)^{i-1} = a_{i,1}e_1 + \cdots + a_{i,n}e_n
\]
in the ring $B[x_1,\ldots, x_n]$.
We call the matrix $M(e_1,\ldots,e_n) = [a_{ij}]_{1 \leq i,j \leq n}$ the \emph{matrix of coefficients} with respect to the basis $e_1,\ldots, e_n$. Its determinant $\localindex(e_1,\ldots,e_n) \in A[x_1, \ldots, x_n]$ is the \emph{local index form} with respect to the basis $e_1,\ldots, e_n$.
\end{definition}

\begin{theorem} ({\cite[Theorem 3.1]{abhs_paper_1}}) \label{thm:gen_local_coords}
With notation as above, $\Gen_{1,B/A}$ is the distinguished open subscheme of $\WR_{1,S'/S}$ cut out by the non-vanishing of the local index form.
In particular,
\[
  \Gen_{1,B/A} \cong \Spec A[x_1,\ldots,x_n, \localindex(e_1,\ldots,e_n)^{-1}].
\]
\end{theorem}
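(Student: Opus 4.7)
The plan is to verify the equality of schemes on functors of points. The Weil restriction $\WR_{1,B/A}$ is already identified with $\AA^n_A = \Spec A[x_1,\dots,x_n]$ via the chosen basis; a $C$-point is a tuple $(c_1,\dots,c_n) \in C^n$, equivalently the element $\theta = c_1 e_1 + \cdots + c_n e_n \in B \otimes_A C$, and the universal map it classifies is the $C$-algebra map $C[t] \to B \otimes_A C$ sending $t \mapsto \theta$. By definition, the point lies in $\Gen_{1,B/A}$ if and only if the corresponding map $\Spec(B \otimes_A C) \to \AA^1_C$ is a closed immersion, i.e.\ the ring map above is surjective. So the task is to show, naturally in $C$, that this surjectivity is equivalent to $\localindex(e_1,\dots,e_n)(c_1,\dots,c_n) \in C^\times$.

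The substantive step reformulates surjectivity in terms of the first $n$ powers of $\theta$. Since $B \otimes_A C$ is finite free of rank $n$ over $C$, Cayley--Hamilton applied to the $C$-linear endomorphism ``multiplication by $\theta$'' shows that $\theta$ satisfies a monic polynomial of degree $n$ over $C$. Hence the image of $C[t] \to B\otimes_A C$ equals the $C$-submodule spanned by $1,\theta,\dots,\theta^{n-1}$, and surjectivity holds if and only if these elements span $B\otimes_A C$. Because the target is free of rank $n$, spanning is equivalent to being a $C$-basis, which in turn is equivalent to the transition matrix from $(e_1,\dots,e_n)$ to $(1,\theta,\dots,\theta^{n-1})$ being invertible over $C$.

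Finally, this transition matrix is precisely the specialisation of $M(e_1,\dots,e_n)$ at $(x_1,\dots,x_n) = (c_1,\dots,c_n)$, whose determinant is $\localindex(e_1,\dots,e_n)(c_1,\dots,c_n)$ by Definition \ref{def:local_coords}. Invertibility of the matrix is equivalent to invertibility of this determinant in $C$, which is exactly the condition defining a $C$-point of the distinguished open $D(\localindex) \subseteq \Spec A[x_1,\dots,x_n]$. Every step is functorial in $C$, so the two subschemes of $\WR_{1,B/A}$ agree, yielding the explicit description $\Gen_{1,B/A} \cong \Spec A[x_1,\dots,x_n, \localindex(e_1,\dots,e_n)^{-1}]$.

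The main obstacle I anticipate is not computational but one of generality: Cayley--Hamilton and the ``span equals basis by rank count'' argument must be invoked for arbitrary $A$-algebras $C$, rather than only over fields or domains. Once one is comfortable that both statements hold for finite free modules over any commutative ring, the rest of the proof is just bookkeeping between the functor of points and the distinguished open.
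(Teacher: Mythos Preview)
Your proof is correct. Note, however, that the present paper does not actually prove this statement: it is recalled from the first paper in the series with a citation to \cite[Theorem~3.1]{abhs_paper_1}, so there is no in-paper proof to compare against directly. That said, your argument is the standard one and aligns with how the authors reason elsewhere in this paper: in the proof of Theorem~\ref{thm:twisted_mono_triangular_power} they invoke exactly the two ingredients you flag as the potential obstacles, namely the existence of a monic degree-$n$ relation for $\theta$ (cited there as \cite[Lemma~2.11]{abhs_paper_1}, which is Cayley--Hamilton in this setting) and the fact that a surjective endomorphism of a finitely generated module is an isomorphism (cited as \cite[Theorem~2.4]{matsumura}). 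So your instincts about where the content lies are on target, and your functor-of-points packaging is clean.
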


Additionally, we recall from \cite{abhs_paper_1} that the local index forms give the complement of $\Gen_{S'/S}$ in $\WR_{S'/S}$ a closed subscheme structure:

\begin{definition}[Non-monogenerators $\NGen_{S'/S}$]\label{defn:nonmonogens}
Let $\indexsheaf_{S'/S}$ be the ideal sheaf on $\WR_{S'/S}$ generated locally by local index forms. We call this the \emph{index form ideal}. Let $\NGen_{S'/S}$ be the closed subscheme of $\WR$ cut out by the vanishing of $\indexsheaf_{S'/S}$. We call this the \emph{scheme of non-monogenerators}, since its support is the complement of $\Gen_{S'/S}$ inside of $\WR_{S'/S}$.
\end{definition}

\bigskip

Since $\Gen_{k,S'/S}$ is a scheme, it is a sheaf in the fpqc topology on $\Sch{S}$. This invites a local study of monogenicity\footnote{`Monogeneity' is also common in the literature.} of $S' \to S$,
the subject of this paper.

\subsection{Results}

We identify and relate several ``local" notions of monogenicity. To guide the reader, their relationships are indicated in Figure \ref{fig:monogenicity_diagram}.

\begin{figure}
\begin{center}
\begin{tikzcd}
&\text{(Globally) Monogenic} \ar[d, Rightarrow]      \\
&\GG_m\text{-Twisted Monogenic} \ar[d, Rightarrow]         \\
&\Afftrans^1\text{-Twisted Monogenic} \ar[d, Rightarrow, "\text{\S\ref{ssec:afftwisted}}"]        \\
\text{\centering\parbox{.85in}{Monogenic at completions}\,\,} & \text{Zariski-Locally Monogenic}\arrow[d,Rightarrow] \ar[r,Leftrightarrow, "\text{Thm.~\ref{thm:locally_mono_is_mono_over_points}}"] \ar[l, Leftrightarrow, "\text{Thm.~\ref{thm:locally_mono_is_mono_over_points}}", swap] & \text{\parbox{.85in}{Monogenic at points}} \\
\text{\parbox{.85in}{Fpqc-Locally Monogenic}} \ar[r,Leftrightarrow,"\text{Thm.~\ref{thm:monogenic_over_geometric_points}}"] & \text{\parbox{.85in}{\'Etale-Locally Monogenic}} \ar[r,Leftrightarrow,"\text{Thm.~\ref{thm:monogenic_over_geometric_points}}"]
&\text{\parbox{1in}{Monogenic at geometric points}} \ar[ul, Rightarrow, dashed, "\text{Cor.~\ref{cor:inf_residue_fields_mono_at_points}}"']&
\end{tikzcd}
\end{center}
\caption{A guide to notions of monogenicity and their relationships. The vertical dashed implication holds under additional hypotheses, see Cor.~\ref{cor:inf_residue_fields_mono_at_points}.}
\label{fig:monogenicity_diagram}
\end{figure}
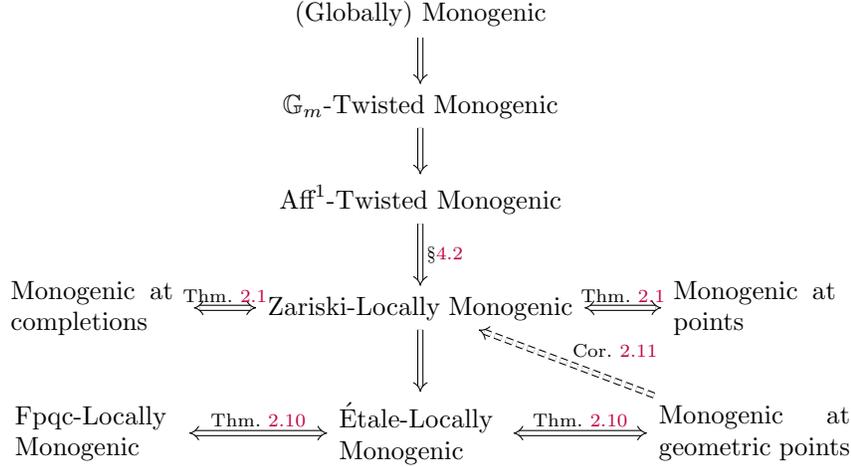

A sheaf theoretic notion of local monogenicity immediately presents itself.\footnote{We remark that Zariski local monogenicity is equivalent to the condition of ``local monogenicity" considered by Bhargava et. al. in \cite{AlpogeBhargavaShnidman} but strictly weaker than their condition of ``no local obstruction to monogenicity."}

\begin{definition}\label{def:sheafmono_paper2}
Let $\tau$ be a subcanonical Grothendieck topology on schemes, for example the Zariski, Nisnevich, \'etale, fppf, or fpqc topologies.
We say that $S'/S$ is $\tau$-\emph{locally $k$-genic} if the sheaf $\Gen_{k, S'/S}$ is locally non-empty in the topology $\tau$.
I.e., there is a $\tau$-cover $\{ U_i \to S \}_{i \in I}$ of $S$ such that $\Gen_{k, S'/S}(U_i)$ is non-empty for all $i \in I$.
\end{definition}

The notions of $\tau$-local monogenicity are considered in Section \ref{sec:locgen}, and we find that these reduce to just two notions of ``local monogenicity."

\begin{theorem}
Let $\pi : S' \to S$ be as in Situation \ref{sit:gensetup_paper2}.
\begin{enumerate}
\item (Theorem \ref{thm:locally_mono_is_mono_over_points}) The following are equivalent:
  \begin{enumerate}
    \item $\pi$ is locally monogenic in the Zariski topology;
    \item $\pi$ is ``monogenic at completions," i.e. for all points $x$ of $S$, we have that $S' \times_S \Spec \hat{\mathscr{O}}_{S,x}$ is monogenic;
    \item $\pi$ is ``monogenic at points," i.e. for each point $x$ of $S$ with residue field $k(x)$, we have that $S' \times_S \Spec k(x) \to \Spec k(x)$ is monogenic.
  \end{enumerate}
\item (Theorem \ref{thm:monogenic_over_geometric_points}) The following are equivalent:
  \begin{enumerate}
    \item $\pi$ is locally monogenic in the \'etale topology;
    \item $\pi$ is locally monogenic in the fpqc topology;
    \item $\pi$ is ``monogenic at geometric points," i.e. for all algebraically closed fields $k$ and maps $\Spec k \to S$, we have that $S' \times_S \Spec k \to \Spec k$ is monogenic.
  \end{enumerate}
\end{enumerate}
\end{theorem}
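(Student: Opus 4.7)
The plan is to exploit the explicit description $\Gen_{B/A} \simeq \Spec A[x_1,\ldots,x_n, \localindex(e_1,\ldots,e_n)^{-1}]$ from Theorem \ref{thm:gen_local_coords}, combined with smoothness of $\Gen_{S'/S} \to S$, to reduce both parts to standard facts about sections of smooth morphisms. Part (1) will hinge on surjectivity of the quotient $\Ocal_{S,x} \twoheadrightarrow k(x)$ together with spreading out from local rings to Zariski neighborhoods; Part (2) will hinge on the fact that smooth surjective morphisms admit sections étale-locally.

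For Part (1), the implications $(a) \Rightarrow (b) \Rightarrow (c)$ are immediate by pulling sections back along $\Spec \hat{\Ocal}_{S,x} \to S$ and $\Spec k(x) \to \Spec \hat{\Ocal}_{S,x}$, so the real content lies in $(c) \Rightarrow (a)$, which also yields $(c) \Rightarrow (b)$. Shrinking $S$, I may assume $S = \Spec A$, $S' = \Spec B$, and $B$ free on a basis $e_1, \ldots, e_n$. For $x \in S$ corresponding to $\gp \subset A$, the hypothesis furnishes $(\bar x_1,\ldots,\bar x_n) \in k(x)^n$ with $\localindex(\bar x_1,\ldots,\bar x_n) \neq 0$. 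Using the surjection $A_\gp \twoheadrightarrow k(x)$, I would lift these to $y_i \in A_\gp$; then $\localindex(y_1,\ldots,y_n)$ lies outside $\gp A_\gp$, hence is a unit in the local ring, giving a monogenerator over $\Spec A_\gp$. Since $\Gen_{S'/S}$ is of finite presentation over the Noetherian base $S$, this section spreads out to a section over some $D(s) \ni x$ with $s \notin \gp$, proving $(a)$. Performing the same lift inside $\hat{\Ocal}_{S,x}$ directly gives $(b)$.

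For Part (2), $(a) \Rightarrow (b)$ is immediate since every étale cover is fpqc. For $(b) \Rightarrow (c)$: the existence of sections over an fpqc cover $\{U_i \to S\}$ forces $\Gen_{S'/S} \to S$ to be surjective (the fpqc cover factors through it and is already surjective on points); for any geometric point $\Spec \bar k \to S$, the base change $\Gen_{S'/S} \times_S \Spec \bar k \to \Spec \bar k$ is then surjective, so it admits a $\bar k$-point and yields a monogenerator there. For $(c) \Rightarrow (a)$: the hypothesis furnishes a geometric point of $\Gen_{S'/S}$ above every $s \in S$, so $\Gen_{S'/S} \to S$ is surjective; combined with smoothness of this morphism (stated in the representability theorem quoted in the excerpt), a standard result on étale quasi-sections (e.g.~EGA IV$_4$, 17.16.3) produces an étale cover of $S$ trivializing $\Gen_{S'/S}$.

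The step that will require the most care is $(c) \Rightarrow (a)$ in Part (2): smoothness of $\Gen_{S'/S}$, which ultimately traces back to $\WR_{S'/S} \simeq \AA^n_S$ being smooth, is what upgrades bare surjectivity into étale-local triviality, and it is precisely this feature that collapses the fpqc and étale local notions into a single condition. A secondary subtlety is Part (1) $(c) \Rightarrow (a)$: the lifting argument must pass through $\Ocal_{S,x}$ rather than the global ring $A$ (residue fields are not generally quotients of $A$), and the Noetherian hypothesis on $S$ enters precisely to guarantee the spreading-out step from $\Spec \Ocal_{S,x}$ to a Zariski open.
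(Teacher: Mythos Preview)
Your proposal is correct and follows essentially the same route as the paper. For Part~(1), the paper's argument for $(c)\Rightarrow(a)$ is exactly your lift-and-spread-out: lift the $\bar x_i$ to $A_{\gp}$, clear a common denominator to land in some $A[f^{-1}]$, then restrict to the open preimage of $\Gen_{S'/S}\subseteq\WR_{S'/S}$; your invocation of ``spreading out from $\Spec\Ocal_{S,x}$'' packages the same step. For Part~(2), the paper also pivots on surjectivity of $\Gen_{S'/S}\to S$ together with its smoothness, using the equivalence of the smooth and \'etale topologies where you cite EGA~IV$_4$~17.16.3; these are the same fact. The only organizational difference is that the paper routes Part~(1) through a slightly more general intermediate condition (jointly surjective families inducing residue field isomorphisms) and deduces the completion statement as a special case, whereas you treat completions directly.
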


We then use the structure of finite algebras over fields to classify monogenicity at points. In particular, we recover Pleasants's characterization \cite[Theorems 1 and 2]{Pleasants} of monogenicity at completions as a corollary. 

\begin{theorem}\label{thm:local_artin_monogenicity}
Let $S' \to S$ be induced by $k \to B$ where $k$ is a field and $B$ is a local Artinian $k$-algebra with residue field $\ell$ and maximal ideal
$\gm$. Then $S' \to S$ is monogenic if and only if
\begin{enumerate}
  \item $\Spec \ell \to \Spec k$ is monogenic;
  \item $\dim_{\ell} \gm/\gm^2 \leq 1$; and
  \item If $\dim_{\ell} \gm/\gm^2 = 1$ and $\ell/k$ is inseparable, then
  \[
    0 \to \gm/\gm^2 \to B/\gm^2 \to \ell \to 0
  \]
  is a non-split extension.
\end{enumerate}
\end{theorem}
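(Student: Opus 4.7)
My plan is to prove necessity and sufficiency of (1)--(3) separately, unified by the observation that for any monogenic $B = k[\theta]$ the ideal $\gm$ is controlled by $f(\theta)$, where $f \in k[x]$ is the minimal polynomial of $\bar\theta \in \ell$. For necessity, assume $B = k[\theta]$. Condition (1) is immediate on reducing modulo $\gm$. For (2), write $B \cong k[x]/J$ via $x \mapsto \theta$; the preimage of $\gm$ in $k[x]$ is the maximal ideal $(f)$, so $\gm = (f)/J$ is principal in $B$ generated by $f(\theta)$, and hence $\gm/\gm^2$ is cyclic as an $\ell$-module. For (3), if the sequence were split with $\ell/k$ inseparable, then $B/\gm^2 \cong \ell[t]/(t^2)$ as a $k$-algebra, and any $\alpha + \beta t$ would satisfy the minimal polynomial $f_\alpha$ of its image $\alpha \in \ell$: Taylor expansion gives $f_\alpha(\alpha + \beta t) = f_\alpha(\alpha) + f_\alpha'(\alpha)\beta t = 0$, using $f_\alpha(\alpha) = 0$ and the inseparability $f_\alpha'(\alpha) = 0$. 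Thus $\dim_k k[\alpha + \beta t] \leq [\ell:k] < \dim_k(B/\gm^2)$, so $B/\gm^2$ is non-monogenic, contradicting the fact that quotients of monogenic rings are monogenic.

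For sufficiency, if $\gm = 0$ then $B = \ell$ and (1) supplies a monogenerator. Otherwise $\dim_\ell \gm/\gm^2 = 1$, so by Nakayama $\gm = (\pi)$ is principal; each $\gm^i/\gm^{i+1}$ is then one-dimensional over $\ell$, and $\dim_k B = nN$ where $n := [\ell:k]$ and $N$ is the least integer with $\gm^N = 0$. By (1), pick $\bar\theta \in \ell$ generating $\ell$ over $k$ with minimal polynomial $f$, and let $\theta \in B$ be a lift. I claim that if $f(\theta)$ generates $\gm$, then $\theta$ is a monogenerator: the elements $\theta^a f(\theta)^b$ with $0 \leq a < n$ and $0 \leq b < N$ reduce to an $\ell$-basis of the associated graded $\bigoplus_i \gm^i/\gm^{i+1}$ (since $f(\theta)^b$ generates $\gm^b/\gm^{b+1}$ and $\bar\theta^a$ form a $k$-basis of $\ell$), so by the standard filtration argument they form a $k$-basis of $B$; since they all lie in $k[\theta]$, we conclude $k[\theta] = B$.

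The remaining task---producing such a lift---is the heart of the argument and where the separable/inseparable dichotomy enters. For two lifts $\theta_0$ and $\theta_1 = \theta_0 + \delta$ with $\delta \in \gm$, Taylor expansion yields $f(\theta_1) \equiv f(\theta_0) + f'(\theta_0)\delta \pmod{\gm^2}$. If $\ell/k$ is separable, $f'(\bar\theta) \in \ell^\times$, so $f'(\theta_0) \in B^\times$ and we can choose $\delta$ (e.g., $\theta := \theta_0 + \pi$) to force $f(\theta) \notin \gm^2$. If $\ell/k$ is inseparable, $f'(\theta_0) \in \gm$ and the correction term lies in $\gm^2$, so the class of $f(\theta)$ in $\gm/\gm^2$ is independent of the lift; this common class is exactly the obstruction to splitting the sequence in (3), as it vanishes iff some lift of $\bar\theta$ satisfies $f$, equivalently iff $B/\gm^2$ contains $\ell$ as a $k$-subalgebra. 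Hypothesis (3) rules this out, so $f(\theta) \notin \gm^2$, and Nakayama promotes $f(\theta)$ to a generator of $\gm$. The main obstacle is precisely this last step---identifying the class of $f(\theta) \pmod{\gm^2}$ with the extension class of the sequence in (3)---which is what lets hypothesis (3) do its work in the inseparable case.
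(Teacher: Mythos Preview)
Your proof is correct, modulo two small imprecisions. In the separable case, the specific choice $\theta := \theta_0 + \pi$ need not work (it fails precisely when $f(\theta_0) \equiv -f'(\theta_0)\pi \pmod{\gm^2}$, but then $\theta_0$ itself already has $f(\theta_0) \notin \gm^2$); the correct statement is that at least one of $\theta_0$, $\theta_0 + \pi$ works. In the necessity of (3), the assertion $f_\alpha'(\alpha) = 0$ is only valid for $\alpha$ generating $\ell$ over $k$---but since any monogenerator of $\ell[t]/(t^2)$ must reduce to such a generator, this is the only case needed.

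Your route is genuinely different from the paper's. The paper argues geometrically: it uses the criterion that a closed immersion is a universally closed, universally injective, unramified map, observes that unramifiedness is detected on tangent vectors, and thereby reduces sufficiency to the square-zero case $B/\gm^2$. It then invokes deformation theory (citing Sernesi) to produce a $k$-algebra map $k[t]/(f(t))^2 \to B/\gm^2$ and analyzes whether it surjects onto $\gm/\gm^2$; the inseparable split case is handled by a direct calculation much like yours. You instead stay with the full Artinian ring $B$, build an explicit $k$-basis $\{\theta^a f(\theta)^b : 0 \le a < n,\ 0 \le b < N\}$ once $f(\theta)$ generates $\gm$, and identify the class of $f(\theta)$ in $\gm/\gm^2$ with the splitting obstruction via Taylor expansion. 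Your argument is more elementary and self-contained---no appeal to external deformation theory---whereas the paper's reduction to the square-zero case is more conceptual and meshes with the scheme-theoretic framework used elsewhere in the paper.
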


\begin{theorem}\label{thm:artin_monogenicity}
Suppose $S' \to S$ is induced by $k \to A$ where $k$ is a field and $B$ is an Artinian $k$-algebra. Write $B = \prod_i B_i$ where the $B_i$ are local artinian $k$-algebras with respective residue fields $\ell_i$. Then $S' \to S$ is monogenic if and only if
\begin{enumerate}
  \item \label{artinmono1} $\Spec B_i \to S$ is monogenic for each $i$;
  \item \label{artinmono2} for each finite extension $\ell$ of $k$, $S'$ has fewer points with residue field isomorphic to $\ell$ than $\mathbb{A}^1_S$.
\end{enumerate}
\end{theorem}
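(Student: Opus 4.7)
The plan is to use the observation that a monogenerator $\theta \in B$ produces a surjection $k[t] \twoheadrightarrow B$ sending $t \mapsto \theta$, whose kernel is a principal ideal $(g)$ since $k[t]$ is a PID. Factoring $g = \prod_j h_j^{n_j}$ into coprime powers of distinct monic irreducibles and applying the Chinese Remainder Theorem yields $B \cong \prod_j k[t]/(h_j^{n_j})$; uniqueness of the decomposition of an Artinian ring into local factors matches this with the given decomposition $B = \prod_i B_i$. This dictionary between monogenerators and systems of coprime polynomials is what I would use to drive both directions of the equivalence.

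For necessity, the decomposition above immediately gives $B_i \cong k[t]/(h_i^{n_i})$ after reindexing, so each $B_i$ is monogenic (condition (\ref{artinmono1})), with $\ell_i \cong k[t]/(h_i)$. Since the $h_i$ are pairwise distinct monic irreducibles, the number of indices $i$ with $\ell_i \cong \ell$ is at most the number of monic irreducibles $h \in k[t]$ with $k[t]/(h) \cong \ell$, i.e., the number of closed points of $\mathbb{A}^1_k$ with residue field isomorphic to $\ell$, yielding condition (\ref{artinmono2}).

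For sufficiency, the goal is to construct monogenerators $\theta_i \in B_i$ whose minimal polynomials over $k$ are pairwise coprime: then $\theta = (\theta_i) \in B$ is a monogenerator because a dimension count forces $k[\theta] = B$. The key observation is that the minimal polynomial of any monogenerator of $B_i$ has the form $h_i^{n_i}$, where $n_i$ is the length of $B_i$ and $h_i$ is the minimal polynomial over $k$ of the image of $\theta_i$ in $\ell_i$; this follows from $h_i(\theta_i) \in \gm_i$ together with $\gm_i^{n_i} = 0$. So coprimality of the $h_i^{n_i}$ reduces to pairwise distinctness of the $h_i$, which reduces in turn to choosing residue-field generators $\bar\theta_i$ of $\ell_i/k$ with pairwise distinct minimal polynomials. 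Condition (\ref{artinmono2}) is exactly what permits this, since for each fixed $\ell$ the set of minimal polynomials of generators of $\ell/k$ is in bijection with the set of closed points of $\mathbb{A}^1_k$ having residue field $\cong \ell$.

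The main obstacle is the lifting step: given a chosen generator $\bar\theta_i$ of $\ell_i/k$, I must produce an actual monogenerator of $B_i$ whose reduction is $\bar\theta_i$. I would handle this using Theorem \ref{thm:local_artin_monogenicity}. When $\dim_{\ell_i} \gm_i/\gm_i^2 = 0$ we have $B_i = \ell_i$ and the lift is the element itself. When $\dim_{\ell_i} \gm_i/\gm_i^2 = 1$ the ideal $\gm_i$ is principal, and a direct calculation --- adjusting an arbitrary set-theoretic lift of $\bar\theta_i$ by a multiple of a uniformizer, and invoking the non-split hypothesis in the inseparable case --- yields a monogenerator lifting $\bar\theta_i$. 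Crucially, the intrinsic conditions of Theorem \ref{thm:local_artin_monogenicity} depend only on $B_i$ and not on the choice of generator, so they persist through these modifications.
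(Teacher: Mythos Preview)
Your proposal is correct and follows essentially the same approach as the paper, only phrased algebraically (factoring the minimal polynomial and invoking CRT) where the paper phrases it geometrically (a map $\bigsqcup_i \Spec B_i \to \AA^1_S$ is a closed immersion if and only if each piece is a closed immersion and their images are disjoint). The crucial lifting step---that any generator $\bar\theta_i$ of $\ell_i/k$ lifts to a monogenerator of $B_i$---is exactly what the paper also isolates, and the paper handles it by citing the proof of Theorem~\ref{thm:local_artin_monogenicity} (which begins from an arbitrary closed immersion $\Spec \ell \hookrightarrow \AA^1_k$ and extends it), so your deferral to that theorem is on target; your direct sketch of the lift is a bit terse in the inseparable case, but since the proof of Theorem~\ref{thm:local_artin_monogenicity} already establishes the needed flexibility, this is not a gap.
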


\bigskip

In Section \ref{sec:confspsmonetalecase}, we consider monogenicity spaces of \'etale $S' \to S$, a salient case since extensions of number rings are generically \'etale and
such \'etale maps share a common local structure:
Finite \'etale maps are \'etale-locally isomorphic to the trivial $n$-sheeted cover
$S \sqcup \cdots \sqcup S \to S$. The latter has $\Gen_{S'/S}$ equal to the configuration
space of $n$ distinct points in $\AA^1$. Therefore we may interpret monogenicity spaces as twisted generalizations of configuration spaces, at least when $S' \to S$ is \'etale.

Using the fact\cite[04HN]{sta} that every finite \'etale map of degree $n$ is pulled back from the morphism of stacks $B\Sigma_{n - 1} \to B\Sigma_n$, we then construct a monogenicity space $\Gen_{B\Sigma_{n-1}/B\Sigma_n}$.

\begin{theorem}\label{thm:three_isoms}
There are isomorphisms
\begin{align*}
  \WR_{B\Sigma_{n-1}/B\Sigma_n} &\cong [\AA^n / \Sigma_n], \\
  \NGen_{B\Sigma_{n-1}/B\Sigma_n}  &\cong [\hat{\Delta} / \Sigma_n],  \\
  \text{ and } \Gen_{B\Sigma_{n-1}/B\Sigma_n} &\cong [(\AA^n - \hat{\Delta}) / \Sigma_n]
\end{align*}
in which the action by $\Sigma_n$ is in each case the appropriate restriction of the permutation action on coordinates of $\AA^n$, and $\hat{\Delta}$ denotes the ``fat diagonal" of $\AA^n$, the locus where some pair of coordinates coincide.
\end{theorem}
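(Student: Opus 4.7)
The plan is to exploit descent along the universal $\Sigma_n$-torsor $\mathrm{pt} \to B\Sigma_n$. Since $B\Sigma_n \cong [\mathrm{pt}/\Sigma_n]$, any stack $\Ycal$ over $B\Sigma_n$ is recovered from its pullback $\Ycal_{\mathrm{pt}} := \Ycal \times_{B\Sigma_n} \mathrm{pt}$, equipped with its $\Sigma_n$-equivariant structure, as $\Ycal \cong [\Ycal_{\mathrm{pt}}/\Sigma_n]$. So it will suffice to compute the pullbacks of $\WR_{B\Sigma_{n-1}/B\Sigma_n}$, $\NGen_{B\Sigma_{n-1}/B\Sigma_n}$, and $\Gen_{B\Sigma_{n-1}/B\Sigma_n}$ along the atlas and track the $\Sigma_n$-action.

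The first step is to identify the base change of the universal cover. Using the identification of $\Sigma_n/\Sigma_{n-1}$ with $\{1,\ldots,n\}$, there is a $2$-Cartesian diagram
\[
\begin{tikzcd}
\bigsqcup_{i=1}^n \mathrm{pt} \ar[r] \ar[d] & B\Sigma_{n-1} \ar[d] \\
\mathrm{pt} \ar[r] & B\Sigma_n
\end{tikzcd}
\]
in which the left map is the trivial $n$-sheeted cover, and the $\Sigma_n$-action on the atlas $\mathrm{pt}$ lifts to the permutation action on the $n$ components of the top-left corner. The second step is to apply Theorem \ref{thm:gen_local_coords} to this trivial cover with respect to the idempotent basis $e_1,\ldots,e_n$ of $\ZZ^n$. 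The Weil restriction is $\Aff^n$, with coordinate $x_i$ recording the image of the generator along the $i$-th sheet; the matrix of coefficients is the Vandermonde matrix, so $\localindex(e_1,\ldots,e_n) = \prod_{i<j}(x_j - x_i)$. Its non-vanishing locus is exactly $\Aff^n \setminus \hat\Delta$, and its vanishing locus is $\hat\Delta$. The identification of $\Sigma_n$-actions from the previous step shows that $\Sigma_n$ permutes the basis $e_1, \ldots, e_n$ and hence acts on $\Aff^n$ by permuting coordinates, as required.

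The third step is descent proper. Assuming the base change properties of $\WR_{\cdot/\cdot}$, $\Gen_{\cdot/\cdot}$, and $\NGen_{\cdot/\cdot}$ established in \cite{abhs_paper_1} extend to the $2$-categorical setting (i.e., they commute with pullback along $\mathrm{pt} \to B\Sigma_n$), the computations above give the required $\Sigma_n$-equivariant identifications over the atlas, and smooth descent assembles them into the three claimed quotient stacks over $B\Sigma_n$.

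The main obstacle is the bookkeeping in the descent step: one must legitimately define $\WR_{B\Sigma_{n-1}/B\Sigma_n}$ and its friends as $2$-functorial constructions taking values in stacks, and verify that the universal property of the Weil restriction (and the monogenerator and non-monogenerator subfunctors) is compatible with smooth base change along $\mathrm{pt} \to B\Sigma_n$. Once this formal setup is in place, the direct calculation with the idempotent basis handles the content of the theorem, and the Vandermonde factorization of the local index form is what packages all three isomorphisms uniformly.
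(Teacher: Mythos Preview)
Your proposal is correct and follows essentially the same route as the paper: pull back along the atlas $\ast \to B\Sigma_n$ to reduce to the trivial cover $\langle n\rangle \to \ast$, compute $\WR$, $\Gen$, and $\NGen$ there via the Vandermonde index form (this is the paper's Example~\ref{ex:mono_of_trivial_cover}), identify the $\Sigma_n$-action as permutation of the sheets and hence of the coordinates, and descend to stack quotients. Your explicit caveat about extending the base-change compatibilities to the $2$-categorical setting is fair; the paper treats this as understood and proceeds exactly as you do.
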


In particular, the $\CC$-points of $\Gen_{B\Sigma_{n-1}/B\Sigma_n}$ coincide with the points of the unordered configuration space of $n$ points in $\CC$. This monogenicity space is universal for \'etale maps in the sense that the monogenicity space $\Gen_{S'/S}$ of each \'etale $S' \to S$ is pulled back from $\Gen_{B\Sigma_{n-1}/B\Sigma_n}$.

We then consider several examples enabled by the structure of $\Gen_{S'/S}$ in the \'etale case, among them a connection to braid groups, a construction of the moduli space of genus zero pointed curves from monogenicity spaces, the monogenicity space of a $G$-torsor, and the monogenicity space of an isogeny of elliptic curves.

We remark that the monogenicity space $\Gen_{B\Sigma_{n-1}/B\Sigma_n}$ appears to be a scheme theoretic enhancement of the universal spaces of \cite[Section 3]{hansen_coverings} and \cite[Section 1]{gorin_lin} in the category of topological spaces. Meanwhile, the universal monogenicity space $\Gen_{*/BG}$ for $G$-torsors appears to be a scheme theoretic enhancement of the space $B_1G$ considered in \cite[Theorem 1.3]{duchamp_hain}. 

\bigskip

A local-to-global sequence is missing for $\tau$-local monogenicity. Yet there are natural group actions on $\Gen_{S'/S}$, in particular by $\GG_m$ and $\Afftrans^1$. In Section \ref{sec:twistedmon} we study ``twisted" versions of monogenicity in which $S' \to S$
has local monogenerators that differ from each other by the action of such groups.
More precisely:
\begin{definition}[Twisted Monogenerators]

A ($\GG_m$)-twisted monogenerator for $B/A$ is:
\begin{enumerate}
    \item a Zariski open cover $\Spec A = \bigcup_i D(f_i)$ for elements $f_i \in A$,
    \item a system of ``local'' monogenerators $\theta_i \in B\left[f_i^{-1}\right]$ for $B[f_i^{-1}]$ over $A[f_i^{-1}]$, and
    \item units $a_{ij} \in A\left[f_i^{-1}, f_j^{-1}\right]^\ast$
\end{enumerate}
such that
\begin{itemize}
    \item for all $i,j$, we have $a_{ij} . \theta_j = \theta_i,$
    \vspace{.1 cm}
    \item for all $i,j,k$, the ``cocycle condition'' holds:
\[a_{ij} . a_{jk} = a_{ik}.\]
\end{itemize}

Two such systems $\{(a_{ij}), (\theta_i)\}$, $\{(a_{ij}'), (\theta_i')\}$ are \emph{equivalent} if, after passing to a common refinement of their respective covers, there is a global unit $u \in A^\ast$ such that $u\cdot a_{ij} = a'_{ij}$ and $u \cdot \theta_i = \theta_i'$. 
Likewise $B/A$ is $\Afftrans^1$-\emph{twisted monogenic} if there is a cover with $\theta_i$'s as above, but with the units in item \eqref{eitem:unitstwistedmon} replaced by pairs $a_{ij}, b_{ij} \in A \left[f_i^{-1}, f_j^{-1}\right]$ such that each $a_{ij}$ is a unit and $a_{ij} \theta_j + b_{ij} = \theta_i$.
\end{definition}

Under certain hypotheses, we show:

\begin{itemize}[leftmargin= 1.2in]
    \item[Proposition \ref{prop:affequivlinebund}:] $B/A$ is $\GG_m$-twisted monogenic if and only if it is $\Afftrans^1$-twisted monogenic.
    
    \item[Theorem \ref{thm:twistedClassNumb1}:] The class number of a number ring $\Ints_K$ is one if and only if all twisted monogenic extensions of number rings $\Ints_L/\Ints_K$ are in fact monogenic.
    
    \item[Remark \ref{rmk:affineequivvstwistedmon}:] There is a local-to-global sequence relating affine equivalence classes of monogenerators with global monogenerators as above.
    
    \item[Theorem \ref{thm:GmandAffrepablepreview}:] There are moduli spaces of $\GG_m$ and $\Afftrans^1$-twisted monogenerators analogous to $\Gen_{S'/S}$. 
    
    \item[Theorem \ref{thm:fintwistedmons}:] There are finitely many twisted monogenerators up to equivalence.
\end{itemize}

We remark that a $\GG_m$-twisted monogenerator is equivalent to an embedding $S'$ over $S$ into a line bundle $L$ on $S$. Such embeddings into line bundles were considered for topological spaces in \cite{lonsted}.
\bigskip

Section \ref{sec:examples} concludes with ample examples of the scheme of monogenerators
and the various interactions between the forms of local monogenicity.

To avoid repetition, we invite the reader to consult the first paper in this series for a more detailed survey of the relevant literature.

\subsection{Acknowledgements}

The second author would like to thank David Smyth and Ari Shnidman for their support and for helpful conversations.

The third author would like to thank Gebhard Martin, the mathoverflow community for \cite{370972} and \cite{377840}, Tommaso de Fernex, Robert Hines, and Sam Molcho. Tommaso de Fernex looked over a draft and made helpful suggestions about jet spaces. The third author thanks the NSF for providing partial support by the RTG grant \#1840190. 

The fourth author would like to thank Henri Johnston and Tommy Hofmann for help with computing a particularly devious relative integral basis in Magma.

All four authors would like to thank their graduate advisors Katherine E. Stange (first and fourth authors) and Jonathan Wise (second and third authors). This project grew out of the fourth author trying to explain his thesis to the second author in geometric terms. We are greatful to Richard Hain for making us aware of the literature on monogenicity for Stein spaces and topological spaces. For numerous computations throughout, we were very happy to be able to employ Magma \cite{Magma} and SageMath \cite{Sage}. For a number of examples the \cite{lmfdb} was invaluable. Finally, we are thankful for Lily.


\section{Local monogenicity}\label{sec:locgen}

\subsection{Zariski-local monogenicity}

This section shows Zariski-local monogenicity can be detected over points and completions as spelled out in Remark \ref{rmk:local_notions}. We will make frequent use of the vocabulary and notation of \cite[Section 3]{abhs_paper_1}.

\begin{theorem} \label{thm:locally_mono_is_mono_over_points}
The following are equivalent:
\begin{enumerate}
    \item\label{1local} $\pi : S' \to S$ is Zariski-locally monogenic.
    
    \item\label{2local} There exists a family of maps $\{ f_i : U_i \to S \}$
    such that
    \begin{enumerate}
      \item the $f_i$ are jointly surjective; 
      \item for each point $p \in S$, there is an index $i$ and point
            $q_p \in f_i^{-1}(p)$ so that $f_i$ induces an isomorphism $k(p) \to k(q_i)$; 
      \item $S' \times_S U_i \to U_i$ is monogenic for all $i$.
    \end{enumerate}
    \item\label{3local} $\pi : S' \to S$ is \emph{monogenic over points}, i.e., $S' \times_S \Spec k(p) \to \Spec k(p)$ is monogenic for each point $p \in S$.
\end{enumerate}
\end{theorem}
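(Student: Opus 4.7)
The plan is the cyclic chain $(1) \Rightarrow (2) \Rightarrow (3) \Rightarrow (1)$. The first implication is immediate: a Zariski cover $\{U_i \hookrightarrow S\}$ witnessing (1) is jointly surjective by open immersions, and each open immersion induces an isomorphism of residue fields at every point of its image. For $(2) \Rightarrow (3)$, fix $p \in S$ and choose $i$ and $q_p \in f_i^{-1}(p)$ with $k(p) \xrightarrow{\sim} k(q_p)$; the composite $\Spec k(q_p) \to U_i \xrightarrow{f_i} S$ then agrees, after this residue field identification, with the canonical map $\Spec k(p) \to S$, so
\[ S' \times_S \Spec k(p) \;\cong\; (S' \times_S U_i) \times_{U_i} \Spec k(q_p), \]
which is monogenic by base change from the monogenic $S' \times_S U_i \to U_i$.

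The substantive implication is $(3) \Rightarrow (1)$. Given $p \in S$, we produce a Zariski open neighborhood of $p$ over which $\pi$ is monogenic. Since $\pi$ is finite locally free of constant rank $n$ over a locally noetherian base, we may shrink $S$ to an affine open $\Spec A$ around $p$ on which $S' \cong \Spec B$ with $B$ free of rank $n$ over $A$ with some basis $e_1,\ldots,e_n$; let $\gp \subset A$ be the prime corresponding to $p$. Theorem \ref{thm:gen_local_coords} identifies
\[ \Gen_{B/A} \;\cong\; \Spec A[x_1,\ldots,x_n,\,\localindex(e_1,\ldots,e_n)^{-1}]. \]
Hypothesis (3) supplies a $k(p)$-point of this scheme over $p$, i.e.\ values $\bar{x}_1,\ldots,\bar{x}_n \in k(p)$ with $\localindex(\bar{x}_1,\ldots,\bar{x}_n) \neq 0$ in $k(p)$. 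Lift each $\bar{x}_i$ to $A_\gp$, then clear denominators to obtain $g \in A \setminus \gp$ and lifts $x_i \in A_g$. The element $h := \localindex(x_1,\ldots,x_n) \in A_g$ reduces modulo $\gp A_g$ to $\localindex(\bar{x}_1,\ldots,\bar{x}_n) \neq 0$, so $h \notin \gp A_g$. On the distinguished open $D(gh) \subset \Spec A$, which contains $p$, the $x_i$ are defined and $\localindex(x_1,\ldots,x_n)$ is a unit, yielding the desired section of $\Gen_{S'/S}$ on a Zariski neighborhood of $p$.

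The only real obstacle is the lifting step in $(3) \Rightarrow (1)$: from a monogenerator in the fiber over $p$ one must produce a monogenerator over a whole Zariski neighborhood while preserving nonvanishing of the local index form. This is resolved cleanly by the explicit affine description of $\Gen_{B/A}$, whose coordinates are global sections of the Weil restriction $\Aff^n_S$ and whose monogenicity locus is the principal open cut out by a single polynomial in those coordinates; no smoothness input or implicit function theorem is needed beyond clearing denominators.
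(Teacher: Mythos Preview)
Your proof is correct and essentially identical to the paper's: both argue the cycle $(1)\Rightarrow(2)\Rightarrow(3)\Rightarrow(1)$, with the substantive step $(3)\Rightarrow(1)$ handled by lifting a $k(p)$-point of $\Gen_{S'/S}$ to coordinates in some $A_g$ with $g \notin \gp$ and then restricting to the open locus where the index form is a unit. The paper phrases this last restriction slightly more abstractly as taking the preimage $\widetilde{\theta}^{-1}(\Gen_{S'/S})$ of the open subscheme $\Gen_{S'/S} \subseteq \WR_{S'/S}$ rather than writing down $D(gh)$ explicitly, but the content is the same.
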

\begin{proof}

\eqref{1local}$\implies$\eqref{2local}: Choose the $U_i$ to be a Zariski cover on which $S' \to S$ is monogenic.

\eqref{2local}$\implies$\eqref{3local}: Suppose such a cover $\{ f_i : U_i \to S \}$ is given. For each $i$, let $\sigma_p : \Spec k(p) \to U_i$ be the section through $q_p$. monogenicity is preserved by pullback on the base, so pulling back $S' \times_S U_i \to U_i$ along $\sigma_p$ implies \eqref{3local}.

\eqref{3local}$\implies$\eqref{1local}:
Let $p \in S$ be a point with residue field $k(p)$ and let $\theta$ be a monogenerator for $S' \times_S \Spec k(p) \to \Spec k(p)$. We claim that $\theta$ extends to a monogenerator over an open subset $U \subseteq S$ containing $p$, from which \eqref{1local} follows.
The claim is Zariski local, so assume $S = \Spec A$ and $\pi_*\OO_{S'} \simeq \bigoplus_{i=1}^n \OO_S e_i$ is globally free. The Weil Restriction $\WR_{S'/S}:= \HHom_S(S', X')$ is then isomorphic to affine space $\Aff^n_S$.

We first extend $\theta$ to a section of $\WR_{S'/S}$. The monogenerator entails a point $\theta : \Spec k(p) \to \Gen_{S'/S} \subseteq \WR_{S'/S} \simeq \Aff^n_S$, i.e. $n$ elements $\overline x_1, \dots, \overline x_n \in k(p)$. Choose arbitrary lifts $x_i \in A_{(p)}$ of $\overline{x}_i$. The $n$ elements $x_i$ must have a common denominator, so we have  $x_1, \dots, x_n \in A\left[f^{-1}\right]$ for some $f$. Thus our point $\theta : \Spec k(p) \to \Aff^n_S$ extends to $\widetilde{\theta} : D(f) \to \WR_{S'/S}$ for some distinguished open neighborhood $D(f) \subseteq S$ containing $p$.

Finally, we restrict $\widetilde{\theta}$ to a section of $\Gen_{S'/S}$. The monogenicity space $\Gen_{S'/S}$ is an open subscheme of the Weil Restriction $\WR_{S'/S}$, so $\widetilde{\theta} : D(f) \to \WR_{S'/S}$ restricts to a monogenerator $\widetilde{\theta}|_U : U \to \Gen_{S'/S}$ where $U = \widetilde{\theta}^{-1}(\Gen_{S'/S}) \subset D(f)$. By hypothesis, $p \in U$, so $\widetilde{\theta}|_U$ is the desired extension of $\theta$.
\end{proof}

\begin{remark}
The same proof shows that $S' \to S$ is Zariski-locally $k$-genic if and only if its fibers $S' \times_S \Spec k(p) \to \Spec k(p)$ are $k$-genic.
\end{remark}

\begin{remark}
\label{rmk:local_notions}
Item \eqref{2local} of Theorem \ref{thm:locally_mono_is_mono_over_points} implies the following are also equivalent to Zariski-local monogenicity:
\begin{enumerate}
    \item\label{def:local_notions_localrings} $S' \to S$ is ``monogenic at local rings," i.e, for each point $p$ of $S$, $S' \times_S \Spec \OO_{S,p} \to \Spec \OO_{S,p}$ is monogenic.
    \item\label{def:local_notions_completions} $S' \to S$ is ``monogenic at completions," i.e., for each point $p$ of $S$, $S' \times_S \Spec \hat{\OO}_{S,p} \to \Spec \hat{\OO}_{S,p}$ is monogenic, where $\hat{\OO}_{S,p}$ denotes the completion of ${\OO}_{S,p}$ with respect to its maximal ideal.
    \item $S' \to S$ is locally monogenic in the Nisnevich topology as in Definition \ref{def:sheafmono_paper2}.
\end{enumerate}
\end{remark}

\begin{corollary}[{\cite[Proposition III.6.12]{LocalFields}}]
Let $S'\to S$ be an extension of local rings inducing a separable extension of residue fields. Then $S'$ is monogenic over $S$.
\end{corollary}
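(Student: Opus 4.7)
My plan is to reduce to monogenicity of the closed fiber and then invoke Theorem \ref{thm:local_artin_monogenicity}. Write $S = \Spec R$ and $S' = \Spec B$, let $p_0$ denote the closed point of $S$ with residue field $k$, and let $\ell$ denote the residue field of $B$. Because $R$ is local, any Zariski-open neighborhood of $p_0$ equals $S$. Hence the argument in the proof of \eqref{3local}$\Rightarrow$\eqref{1local} of Theorem \ref{thm:locally_mono_is_mono_over_points}, which extends a fiber monogenerator at a point to a monogenerator on an open neighborhood of that point, will produce a genuine global monogenerator for $\pi$. It therefore suffices to exhibit a monogenerator on the closed fiber $S' \times_S \Spec k \to \Spec k$.

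The closed fiber is $\Spec(B/\gm_R B)$, a local Artinian $k$-algebra with residue field $\ell$ and maximal ideal $\gn = \gm_B/\gm_R B$. I would apply Theorem \ref{thm:local_artin_monogenicity} and verify its three conditions in turn. Condition $(1)$, that $\ell/k$ be monogenic, holds by the primitive element theorem applied to the separable extension $\ell/k$. Condition $(2)$, that $\dim_\ell \gn/\gn^2 \leq 1$, follows from principality of $\gm_B$: in the setting of discrete valuation ring extensions implicit in the citation to Serre, $\gm_B = (\pi_B)$ is principal, so its image $\gn$ in $B/\gm_R B$ is principal as well. Condition $(3)$, the non-splitting of $0 \to \gn/\gn^2 \to (B/\gm_R B)/\gn^2 \to \ell \to 0$ when $\dim_\ell \gn/\gn^2 = 1$ and $\ell/k$ is inseparable, is vacuous by separability of $\ell/k$.

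The main obstacle is condition $(2)$: it is where the structural content of the hypothesis is used, ensuring the maximal ideal of the closed fiber is principal. Beyond the discrete valuation ring setting, the closed fiber can have arbitrarily high embedding dimension (for instance, $B = R[s,t]/(s^2, st, t^2)$ has trivial residue extension but yields a closed fiber of embedding dimension $2$), and neither Theorem \ref{thm:local_artin_monogenicity} nor the corollary would remain valid. The other two conditions, by contrast, follow essentially automatically from the separability of $\ell/k$.
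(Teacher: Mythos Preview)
Your proof is correct and takes the same route as the paper: reduce to the fiber via the lifting argument of Theorem~\ref{thm:locally_mono_is_mono_over_points}. The paper's own proof is a one-line pointer to the equivalence of Remark~\ref{rmk:local_notions}(\ref{def:local_notions_localrings}) with Theorem~\ref{thm:locally_mono_is_mono_over_points}(\ref{3local}), leaving the verification of fiber monogenicity unspoken; you make that step explicit via Theorem~\ref{thm:local_artin_monogenicity}, which is fine mathematically but introduces a forward reference, since that theorem is only proved later in \S\ref{ssec:mon_over_points}.
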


\begin{proof}

Use the equivalence of item \eqref{def:local_notions_localrings} in Remark \ref{rmk:local_notions} and \eqref{3local} in Theorem \ref{thm:locally_mono_is_mono_over_points}.
\end{proof}

We now recall some ideas in order to compare with related results in the number ring case.

\begin{definition}\label{def: commonindexdivisor}
Let $\ZZ_L / \ZZ_K$ be an extension of number rings. Given $\theta \in \ZZ_L$ generating $L/K$, we write $\classicalindex_{\ZZ_L/\ZZ_K}(\theta)$ for the index $[\ZZ_L : \ZZ_K[\theta]]$. A non-zero prime of $\gp\subset \ZZ_K$
is a \textit{common index divisor}\footnote{Common index divisors are also called \textit{essential discriminant divisors} and \textit{inessential or nonessential discriminant divisors}. The shortcomings of the English nomenclature likely come from what Neukirch \cite[page 207]{Neukirch} calls ``the untranslatable German catch phrase [...] `au{\ss}erwesentliche Diskriminantenteile.'" Our nomenclature is closer to Fricke's `ständiger Indexteiler.'} for the extension $\ZZ_L/\ZZ_K$ if $\classicalindex_{\ZZ_L/\ZZ_K}(\theta)\in\gp$ for every $\theta\in \ZZ_L$ generating $L/K$.
\end{definition}

Common index divisors are exactly the primes $\gp$ whose splitting in $\ZZ_L$ cannot be mirrored by irreducible polynomials in $k(\gp)[x]$; see \cite{Hensel1894} and \cite{Pleasants}.

We recall\cite[Remark 3.11]{abhs_paper_1} that if $U$ is an open affine subscheme of $S$ on which $S'$ is free with basis $e_1, \ldots, e_n$ then $\classicalindex_{\ZZ_L/\ZZ_K}(x_1e_1 + \cdots + x_ne_n)$ is a local index form on $U$. 

Restating the property of being monogenic at points in terms of the index form, we obtain a generalization of the notion of having no common index divisors:

\begin{proposition}
\label{prop:mono_over_points_index_form}
$S' \to S$ is monogenic over points if and only if for each point $p$ of $S$ and local index form $\localindex$ around $p$, there is a tuple $(x_1, \ldots, x_n) \in k(p)^n$ such that $\localindex(x_1, \ldots, x_n)$ is nonzero in $k(p)$.
\end{proposition}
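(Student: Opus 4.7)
The plan is to reduce the claim to a direct application of Theorem \ref{thm:gen_local_coords} on the fiber over each point of $S$.

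First I would fix a point $p \in S$. Since the statement is local on $S$, I would choose an affine open neighborhood $U = \Spec A$ of $p$ on which $\pi_\ast \mathcal{O}_{S'}$ is free, with $A$-basis $e_1, \ldots, e_n$, so that $S' \times_S U = \Spec B$ with $B = \bigoplus_{i=1}^n A e_i$. Then Theorem \ref{thm:gen_local_coords} gives
\[
  \Gen_{1, B/A} \;\cong\; \Spec A[x_1, \ldots, x_n, \localindex(e_1, \ldots, e_n)^{-1}],
\]
exhibiting $\Gen$ as the distinguished open in $\WR_{1,S'/S}|_U \cong \mathbb{A}^n_U$ where the local index form is invertible.

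Next I would note that the formation of the matrix of coefficients $M(e_1, \ldots, e_n)$, and hence of the local index form, is compatible with base change. Indeed, the images $\bar e_1, \ldots, \bar e_n$ of the $e_i$ in $B \otimes_A k(p)$ form a $k(p)$-basis of $B \otimes_A k(p)$, and the defining relation
\[
  (x_1 e_1 + \cdots + x_n e_n)^{i-1} = \sum_j a_{ij} e_j
\]
reduces modulo $\mathfrak{p}$ to the analogous relation for the $\bar e_i$. Hence the local index form for the fiber $B \otimes_A k(p)$ with basis $(\bar e_1, \ldots, \bar e_n)$ is precisely the image $\bar{\localindex} \in k(p)[x_1, \ldots, x_n]$ of $\localindex(e_1, \ldots, e_n)$.

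Applying Theorem \ref{thm:gen_local_coords} now to the fiber, I obtain
\[
  \Gen_{1, (B \otimes_A k(p))/k(p)} \;\cong\; \Spec k(p)[x_1, \ldots, x_n, \bar{\localindex}^{-1}].
\]
By the universal property of $\Gen$ (or equivalently, by inspection of this affine scheme), monogenicity of $S' \times_S \Spec k(p) \to \Spec k(p)$ is equivalent to the existence of a $k(p)$-point of this scheme, which is in turn equivalent to the existence of a tuple $(x_1, \ldots, x_n) \in k(p)^n$ with $\bar{\localindex}(x_1, \ldots, x_n) \neq 0$. Finally, since any two choices of basis differ by an element of $\mathrm{GL}_n(A)$, their local index forms differ by a unit of $A$, so the existence of a non-vanishing tuple is independent of the choice of basis and of the choice of affine neighborhood. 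This yields the equivalence for each $p$, and hence the proposition.

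No step is really an obstacle; the only thing to be careful about is the base-change compatibility of the local index form and the independence of the non-vanishing condition from the chosen basis, both of which are immediate from the definitions.
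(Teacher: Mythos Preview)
Your proof is correct and follows essentially the same approach as the paper: the paper's argument is a one-line observation that $S' \times_S \Spec k(p) \to \Spec k(p)$ is monogenic precisely when $\Spec k(p)[x_1,\ldots,x_n, \localindex^{-1}]$ has a $k(p)$-point. You have simply unpacked the implicit base-change compatibility of the index form and the independence from the chosen basis, which the paper takes for granted.
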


\begin{proof}
$S' \times \Spec k(p) \to \Spec k(p)$ is monogenic precisely when
$\Spec k(p)[x_1,\ldots,x_n, \localindex(x_1,\ldots, x_n)^{-1}]$ has a $k(p)$
point. 
\end{proof}

Immediately we recover an explicit corollary validating the generalization: 

\begin{corollary}\label{cor:Zar_local_mon_index_div}
Suppose $S' \to S$ is induced by an extension of number rings $\ZZ_L/\ZZ_K$. Then $S' \to S$ is Zariski-locally monogenic if and only if there
are no common index divisors for $\ZZ_L/\ZZ_K$.
\end{corollary}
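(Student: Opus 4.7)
The plan is to chain together Theorem~\ref{thm:locally_mono_is_mono_over_points}, Proposition~\ref{prop:mono_over_points_index_form}, and the identification of local index forms with $\classicalindex_{\ZZ_L/\ZZ_K}$ noted just above. By Theorem~\ref{thm:locally_mono_is_mono_over_points}, Zariski-local monogenicity of $S' \to S$ is equivalent to monogenicity at every point of $S = \Spec \ZZ_K$. At the generic point the residue field is $K$, and since $L/K$ is finite separable the primitive element theorem makes monogenicity automatic; so only the nonzero primes $\gp \subset \ZZ_K$ contribute a nontrivial condition. By Proposition~\ref{prop:mono_over_points_index_form}, the condition at such a $\gp$ is that some (equivalently, every) local index form $\localindex$ defined near $\gp$ attains a nonzero value on some tuple in $k(\gp)^n$.

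Next I would rephrase this nonvanishing statement in the classical language of common index divisors. Pick a basic open $U = D(s) \ni \gp$ on which $\ZZ_L$ is free over $\ZZ_K[s^{-1}]$ with basis $e_1,\ldots,e_n$; by \cite[Remark 3.11]{abhs_paper_1} the local index form on $U$ satisfies $\localindex(y_1,\ldots,y_n) = \classicalindex_{\ZZ_L/\ZZ_K}(\sum y_i e_i)$ for every integral tuple. Since $s$ is a unit at $\gp$, any nonvanishing tuple over $k(\gp)$ lifts to $(x_1,\ldots,x_n) \in \ZZ_K^n$ so that $\theta := \sum x_i e_i \in \ZZ_L$ has $\classicalindex_{\ZZ_L/\ZZ_K}(\theta) \notin \gp$; because the index form does not vanish at $\theta$, this $\theta$ automatically generates $L/K$. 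Conversely, given any $\theta \in \ZZ_L$ generating $L/K$ with $\classicalindex(\theta) \notin \gp$, expanding $\theta$ in the local basis and reducing modulo $\gp$ yields a point of $k(\gp)^n$ where $\localindex$ is nonzero. Running this equivalence over all nonzero primes $\gp$ says exactly that Zariski-local monogenicity is equivalent to the absence of common index divisors.

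There is no real obstacle to this proof; the corollary is essentially a translation exercise packaging together the earlier results. The only mild subtlety is the compatibility of the locally defined $\localindex$ with the globally defined $\classicalindex_{\ZZ_L/\ZZ_K}$ over a neighborhood of $\gp$, which is handled once and for all by \cite[Remark 3.11]{abhs_paper_1}, together with the observation that $s$ being a unit at $\gp$ lets us freely lift residue-field tuples to integral tuples without leaving $D(s)$ or changing membership in $\gp$.
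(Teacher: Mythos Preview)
Your proposal is correct and follows exactly the route the paper intends: the paper states this corollary immediately after Proposition~\ref{prop:mono_over_points_index_form} with the phrase ``Immediately we recover an explicit corollary,'' offering no separate proof, so your argument is simply a careful unpacking of the chain Theorem~\ref{thm:locally_mono_is_mono_over_points} $\Rightarrow$ Proposition~\ref{prop:mono_over_points_index_form} $\Rightarrow$ the classical-index identification from \cite[Remark~3.11]{abhs_paper_1}. One small point worth tightening: when you lift a $k(\gp)$-tuple to $\ZZ_K^n$ and form $\theta = \sum x_i e_i$, the basis elements $e_i$ a priori lie only in $\ZZ_L[s^{-1}]$, so you should note that one may first clear denominators to choose the $e_i \in \ZZ_L$ (still a basis over $\ZZ_K[s^{-1}]$ since $s$ is a unit there), ensuring $\theta \in \ZZ_L$ as required by Definition~\ref{def: commonindexdivisor}.
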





\begin{example}\label{ex: NoCommonIndexNotMono}
There are extensions of number rings that are locally monogenic but not monogenic.

In \cite{AlpogeBhargavaShnidman}, Alp\"oge, Bhargava, and Shnidman say that an extension $\ZZ_K / \ZZ$ has \emph{no local obstruction to monogenicity} if a local index form represents $1$ over $\mathbb{Z}_p$ for all primes $p$ or $-1$ over $\mathbb{Z}_p$ for all primes $p$. This is a stronger condition than Zariski-local monogenicity, and they show in \cite{AlpogeBhargavaShnidman} and \cite{ABSQuartic} that a positive proportion of quartic and cubic fields are not monogenic despite having no local obstruction to monogenicity.

Narkiewicz \cite[Page 65]{Nark} gives the following concrete example of a locally monogenic but not monogenic extension. Let $L=\QQ(\sqrt[3]{m})$ with $m=ab^2$, $ab$ square-free, $3\dnd m$, and $m\not\equiv \pm1\bmod 9$. The number ring $\ZZ_L$ is not monogenic over $\ZZ$ despite having no common index divisors. We consider the case $ab^2=7\cdot 5^2$ in Example \ref{ex:2genicOverZ_paper2}. This also gives an example of an extension which is Zariski locally monogenic but which has a local obstruction to monogenicity.
\end{example}

\subsection{Monogenicity over geometric points}\label{ssec:Mono_at_geo_points}

\begin{definition}\label{def:mon_over_geo_pts}
Say that $S'$ over $S$ is \emph{monogenic over geometric points} if, for each morphism $\Spec k \to S$ where $k$ is an algebraically closed field, $S' \times_S \Spec k \to \Spec k$ is monogenic.
\end{definition}

While it is a weaker condition than monogenicity over points in general, 
it is equivalent to some conditions that might seem more natural. 

\begin{theorem}\label{thm:monogenic_over_geometric_points}
The following are equivalent:
\begin{enumerate}
    \item\label{geomono1} the local index forms for $S'/S$ are nonzero on each fiber of $\WR_{S'/S} \to S$;
    \item\label{geomono2} for each point $p \in S$ with residue field $k(p)$, there is a finite Galois extension $L / k(p)$ such that $S' \times_S \Spec L \to \Spec L$ is monogenic, and this extension may be chosen to be trivial if $k(p)$ is an infinite field;
    \item\label{geomono3} $S' \to S$ is monogenic over geometric points;
    \item\label{geomono4} there is a jointly surjective collection of maps $\{ U_i \to S \}$ so that $S' \times_S U_i \to U_i$ is monogenic for each $i$;
    \item\label{geomono4.5} $\Gen_{S'/S} \to S$ is surjective;
    \item\label{geomono4.6} $S' \to S$ is \'etale-, smooth-, fppf-, or fpqc-locally monogenic.
\end{enumerate}
If, in addition, $\NGen_{S'/S}$ is a Cartier divisor in $\WR$ (i.e., the local index forms are non-zero divisors), the above are also equivalent to:
\begin{enumerate}[resume]
    \item\label{geomono5} $\NGen_{S'/S} \to S$ (Definition \ref{defn:nonmonogens}) is flat.
\end{enumerate}
\end{theorem}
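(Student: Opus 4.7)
The plan is to establish a cycle of implications among the first six conditions, and then handle (5) separately under the Cartier hypothesis. I would organize the cycle as
$$
(1) \iff (4.5) \iff (3) \Longrightarrow (2) \Longrightarrow (4) \Longrightarrow (4.5),
$$
with (4.6) sandwiched between (4.5) and (4) via smoothness.

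The heart of the argument is (1) $\iff$ (4.5), which comes directly from Theorem \ref{thm:gen_local_coords}: on a trivializing affine open, $\Gen_{S'/S} \times_S \Spec k(p) \simeq \Spec k(p)[x_1, \ldots, x_n, \localindex^{-1}]$, which is nonempty precisely when $\localindex$ is nonzero as a polynomial over $k(p)$. For (1) $\iff$ (3), a geometric point $\Spec \bar k \to S$ over $p$ produces a monogenerator iff there exists $(x_1, \ldots, x_n) \in \bar k^n$ with $\localindex(x_1, \ldots, x_n) \neq 0$; since $\bar k$ is algebraically closed and hence infinite, such a tuple exists iff the polynomial $\localindex$ is nonzero over $\bar k$, which is equivalent to nonvanishing over $k(p)$ by faithful flatness of the field extension.

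For (3) $\implies$ (2), a monogenerator over $\overline{k(p)}$ is specified by $n$ algebraic elements and hence lies in some finite subextension of $k(p)$, whose Galois closure $L$ suffices; when $k(p)$ is infinite, the previous equivalences already yield a $k(p)$-monogenerator directly, since a nonzero polynomial over an infinite field is nonvanishing at some rational point. Then (2) $\implies$ (4) is immediate with the $U_i$ taken to be the $\Spec L$. The closing implication (4) $\implies$ (4.5) goes by base change: if $S' \times_S U_i \to U_i$ is monogenic then $\Gen_{S'/S} \times_S U_i \to U_i$ admits a section and hence is surjective, and joint surjectivity of $\{U_i \to S\}$ then forces $\Gen_{S'/S} \to S$ itself to be surjective. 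The remaining equivalence (4.5) $\iff$ (4.6) exploits the smoothness of $\Gen_{S'/S} \to S$ established in the previous paper: a smooth surjection is automatically an fpqc (hence fppf, and smooth-locally étale) cover, and pulling back the tautological monogenerator on $\Gen_{S'/S}$ yields monogenerators in each flavor of (4.6), while any flavor of (4.6) trivially implies (4).

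For (5) under the Cartier hypothesis that $\localindex$ is locally a nonzerodivisor, I would use the short exact sequence $0 \to \mathscr{O}_\WR \xrightarrow{\cdot \localindex} \mathscr{O}_\WR \to \mathscr{O}_\NGen \to 0$ as a flat resolution of $\mathscr{O}_\NGen$ over $\mathscr{O}_S$, since $\mathscr{O}_\WR$ is flat (locally $\mathbb{A}^n_S$). Computing $\Tor^{\mathscr{O}_{S,p}}_1(k(p), \mathscr{O}_{\NGen,q})$ for $q \in \WR$ above $p \in S$ reduces it to the kernel of multiplication by $\bar{\localindex}$ on a localization of the domain $k(p)[x_1, \ldots, x_n]$; this kernel vanishes iff $\bar{\localindex} \neq 0$, which by the local flatness criterion yields (5) $\iff$ (1). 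The main obstacle I anticipate is the descent (3) $\implies$ (2), where one must simultaneously ensure finiteness and Galoisness of the descent field and also arrange the cleaner direct argument when $k(p)$ is infinite; the remaining steps reduce to formal manipulations with the local-coordinate description of $\Gen_{S'/S}$.
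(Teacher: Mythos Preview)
Your proposal is correct and follows essentially the same approach as the paper. The substantive arguments---the local-coordinate description to link nonvanishing of $\localindex$ over $k(p)$ with nonemptiness of the fiber of $\Gen_{S'/S}$, the use of smoothness of $\Gen_{S'/S}\to S$ together with the tautological section to produce \'etale-local monogenerators, and the $\Tor_1$ computation via the two-term resolution $0\to\OO_{\WR}\xrightarrow{\localindex}\OO_{\WR}\to\OO_{\NGen}\to 0$ for the flatness statement---are identical to the paper's. The only difference is organizational: the paper runs the cycle $(1)\Rightarrow(2)\Rightarrow(3)\Rightarrow(4)\Rightarrow(1)$ and then attaches $(4.5)$ and $(4.6)$, whereas you first establish the core equivalences $(1)\Leftrightarrow(3)\Leftrightarrow(4.5)$ and loop the remaining conditions through them. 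Your anticipated obstacle in $(3)\Rightarrow(2)$ is not a real issue: since you handle infinite $k(p)$ separately via the nonzero-polynomial argument, the Galois-closure step is only invoked for finite $k(p)$, where every finite extension is automatically Galois.
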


To see some of the subtleties one can compare item \eqref{geomono1} above with Lemma \ref{prop:mono_over_points_index_form} and item \eqref{geomono4} above with item \eqref{2local} of Theorem \ref{thm:locally_mono_is_mono_over_points}.

\begin{proof}
The assertions are Zariski-local, so we may choose local coordinates as usual ($S = \Spec A$, $S' = \Spec B$, 
coordinates $x_I$).

\eqref{geomono1} $\implies$ \eqref{geomono2}: Suppose first that $p \in S$ is a point with $k(p)$ an infinite field. Let $\mathfrak{p}$ be the corresponding prime of $A$ and write $\ol{\localindex}$ for the restriction of the local index form modulo $\mathfrak{p}$. Recall that since $k(p)$ is infinite, polynomials in $k(p)[x_1,\ldots,x_n]$ are determined by their values on $(x_i) \in k(p)^n$. Since $\ol{\localindex}$ is nonzero, $\ol{\localindex}(a_1, \ldots, a_n)$ must be nonzero for some $(a_1, \ldots, a_n) \in k(p)^n$. This shows that $S' \times_S \Spec k(p) \to \Spec k(p)$ is
monogenic, so we have \eqref{geomono2}.

Next suppose $k(p)$ is a finite field. Then, since $\ol{\localindex}$
is nonzero, there is a finite field extension $L$ (necessarily Galois) of $k(p)$ such that there exists $(a_1, \ldots, a_n) \in L^n$ with $\ol{\localindex}(a_1,\ldots,a_n) \neq 0$. This shows that $S' \times_S \Spec L \to \Spec L$ is
monogenic, so we have \eqref{geomono2} again.

\eqref{geomono2} $\implies$ \eqref{geomono3}: Let $k$ be an algebraically
closed field and $\Spec k \to S$ a map. Let $p$ be the image of $\Spec k$,
and let $L$ be the field extension given by \eqref{geomono2}. Then pullback along $\Spec k \to \Spec L$ implies that $S' \times_S \Spec k \to \Spec k$
is monogenic.

\eqref{geomono3} $\implies$ \eqref{geomono4}: Take $\{ U_i \to S \}$
to be $\{ \Spec \ol{k(p)} \to S \}_{p \in S}$.

\eqref{geomono4} $\implies$ \eqref{geomono1}: For each point $p \in S$,
choose an index $i$ and a point $q_p \in U_i$ mapping to $p$. Let $\localindex$ be an index form around $p$. By pullback,
$S' \times_S q_p \to q_p$ is monogenic, so $\localindex$ pulls back to a nonzero function over $k(q_p)$. Therefore $\localindex$ is nonzero over $k(p)$
as well.

\eqref{geomono2} $\implies$ \eqref{geomono4.5}: For each point $p \in S$, the $\Spec L$ point of $\Gen_{S'/S}$ witnessing monogenicity of $S' \times_S \Spec L \to \Spec L$ is a preimage of $p$.

\eqref{geomono4.5} $\implies$ \eqref{geomono4.6}: Note that $\Gen_{S'/S} \to S$ is smooth, since $\Gen_{S'/S} \to \WR_{S'/S} \to S$ is the composite of an open immersion and an affine bundle. Moreover, the identity function on $\Gen_{S'/S}$ by definition yields a monogenerator for $S' \times_S \Gen_{S'/S} \to \Gen_{S'/S}$. Therefore, $S'/S$ is smooth-locally monogenic. Since the smooth topology is equivalent to the \'etale topology, there is an \'etale cover $U \to S$ factoring through $\Gen_{S'/S} \to S$. Since $S' \times_S \Gen_{S'/S} \to \Gen_{S'/S}$ is already monogenic, $S' \times_S U \to U$ is also monogenic. This \'etale cover is also a cover in the fppf and fpqc topologies.

\eqref{geomono4.6} $\implies$ \eqref{geomono4}: Trivial.


\eqref{geomono1} $\iff$ \eqref{geomono5}: The sequence
\[
  0 \to \indexsheaf_{S'/S} \to \OO_{\WR_{1,S'/S}} \to \OO_{\NGen_{S'/S}} \to 0
\]
may be written as
\[
  0 \to A[x_I] \overset{\localindex}{\to} A[x_I] \to A[x_I]/\localindex \to 0
\]
where $\localindex$ is a local index form for $S'/S$.

Recall that an $A[x_I]$-module $M$ is flat if and only if for each prime $\gp$ of $A$ and ideal $\gq$ of $A[x_I]$ lying
over $\gp$, $M_\gq$ is
flat over $A_\gp$. Therefore, by the local criterion for
flatness\cite[00MK]{sta}, $\NGen_{S'/S}$ is flat over $S$ if and only if
\[
  \Tor_1^{A_\gp}(A_\gp/\gp A_\gp, (A[x_I]/\localindex)_\gq) = 0
\]
for all such ideals $\gp$ and $\gq$.
Therefore, $\NGen_{S'/S}$ is flat if and only if 
\[
  A[x_I]_\gq/\gp A[x_I]_\gq \overset{\localindex (\mathrm{mod} \, \mathfrak{p})}{\longrightarrow} A[x_I]_\gq/\gp A[x_I]_\gq
\]
is injective for all $\gp$ and $\gq$ as above. All of these maps are injective if and only if the maps of $A[x_I]$-modules
\[
  A_{\mathfrak{p}}[x_I]/\mathfrak{p}A_{\mathfrak{p}}[x_I] \overset{\localindex (\mathrm{mod} \, \mathfrak{p})}{\longrightarrow} A_{\mathfrak{p}}[x_I]/\mathfrak{p}A_{\mathfrak{p}}[x_I]
\]
are all injective as $\mathfrak{p}$ varies over the prime ideals of $A$. 
Since $A_{\mathfrak{p}}[x_I]/\mathfrak{p}A_{\mathfrak{p}}[x_I] \cong (A_{\mathfrak{p}}/\mathfrak{p}A_{\mathfrak{p}})[x_I]$ is an integral domain for each $\mathfrak{p}$, injectivity fails if and only if
$\localindex$ reduces to $0$ in the fiber over some $\mathfrak{p}$.
We conclude that \eqref{geomono1} holds if and only if \eqref{geomono5} holds.
\end{proof}

\begin{corollary} \label{cor:inf_residue_fields_mono_at_points}
If all of the points of $S$ have infinite residue fields, then the following are equivalent:
\begin{enumerate}
  \item \label{infresiduemono1} $S' \to S$ is monogenic over geometric points;
  \item \label{infresiduemono2} $S'/S$ is Zariski-locally monogenic.
\end{enumerate}
\end{corollary}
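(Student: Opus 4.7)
The plan is to deduce this corollary by combining Theorem \ref{thm:locally_mono_is_mono_over_points} (which equates Zariski-local monogenicity with monogenicity at points) and Theorem \ref{thm:monogenic_over_geometric_points} (which equates monogenicity at geometric points with the nonvanishing of local index forms on fibers of $\WR_{S'/S} \to S$). The infinite residue field hypothesis will bridge the gap between ``nonzero local index form on a fiber'' and ``having a $k(p)$-rational nonvanishing point,'' via the standard fact that a nonzero polynomial over an infinite field cannot vanish on all of $k(p)^n$.

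For \eqref{infresiduemono2} $\Rightarrow$ \eqref{infresiduemono1}: Zariski-local monogenicity implies monogenicity at points by Theorem \ref{thm:locally_mono_is_mono_over_points}. For any geometric point $\Spec k \to S$ factoring through a point $p \in S$, further base change along $\Spec k \to \Spec k(p)$ preserves monogenicity, giving monogenicity at geometric points. This direction in fact holds without the hypothesis on residue fields.

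For \eqref{infresiduemono1} $\Rightarrow$ \eqref{infresiduemono2}: By Theorem \ref{thm:monogenic_over_geometric_points} \eqref{geomono1}, the local index form $\localindex$ around any point $p \in S$ is nonzero modulo the prime $\gp$ corresponding to $p$, i.e., is a nonzero polynomial in $k(p)[x_1, \ldots, x_n]$. Since $k(p)$ is infinite by hypothesis, there exists a tuple $(a_1, \ldots, a_n) \in k(p)^n$ with $\localindex(a_1, \ldots, a_n) \neq 0$ in $k(p)$. By Proposition \ref{prop:mono_over_points_index_form}, this exhibits $S' \times_S \Spec k(p) \to \Spec k(p)$ as monogenic, so $S'/S$ is monogenic at points. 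Applying Theorem \ref{thm:locally_mono_is_mono_over_points} yields Zariski-local monogenicity.

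There is no serious obstacle: the only point at which the hypothesis is used is to promote ``nonzero polynomial'' to ``has a nonvanishing $k(p)$-point,'' and this is exactly the dividing line separating geometric-point monogenicity from point-monogenicity visible in the proof of Theorem \ref{thm:monogenic_over_geometric_points} \eqref{geomono1} $\Rightarrow$ \eqref{geomono2}. The corollary is thus essentially a bookkeeping consequence of the two main theorems of the section.
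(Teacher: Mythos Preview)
Your proof is correct and is essentially the argument the paper intends; the corollary is stated without proof, as it is immediate from the two theorems you invoke. A marginally shorter route is to cite item \eqref{geomono2} of Theorem \ref{thm:monogenic_over_geometric_points} directly---it already asserts that the extension $L/k(p)$ may be taken trivial when $k(p)$ is infinite---rather than passing back through \eqref{geomono1} and Proposition \ref{prop:mono_over_points_index_form}, but this is the same argument unpacked one step further.
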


\begin{remark}
The conclusion of Corollary \ref{cor:inf_residue_fields_mono_at_points} fails dramatically if $S$ has finite residue fields.
For $S' \to S$ coming from an extension of number rings condition \eqref{infresiduemono1} \emph{always} holds (see Corollary \ref{cor:number_rings_mono_over_geo_points} below), yet there are extensions that are not locally monogenic.
In this sense, monogenicity is more subtle in the arithmetic context than the geometric one. For an example of an extension that is monogenic over geometric points but is not monogenic over points see Example~\ref{ex:Dedekind_paper2}.
\end{remark}

\subsection{Monogenicity over points}\label{ssec:mon_over_points}

In light of Theorem \ref{thm:locally_mono_is_mono_over_points} and Theorem \ref{thm:monogenic_over_geometric_points}, it is particularly interesting to characterize monogenicity of $S' \to S$ in the case that $S$ is the spectrum of a field $k$. In this case $S'$ is the spectrum
of an $n$-dimensional $k$-algebra $B$. Such algebras are Artinian rings, and a
well-known structure theorem implies that $B$ is a direct product of local Artinian rings $B_i$. We will exploit this to give a complete characterization of Zariski-local monogenicity.

The result in the case that both $S'$ and $S$ are spectra of fields is well-known.

\begin{theorem}[Theorem of the primitive element]
Let $\ell/k$ be a finite field extension. Then $\Spec \ell \to \Spec k$ is monogenic
if and only if there are finitely many intermediate subfields $\ell/\ell'/k$.

In particular, a finite separable extension of fields is monogenic.
\end{theorem}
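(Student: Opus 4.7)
The plan is to prove the equivalence by establishing both implications separately, then derive the ``in particular'' clause using Galois theory. The key organizational insight is that in the monogenic case $\ell = k(\theta)$, intermediate fields $k \subseteq \ell' \subseteq \ell$ are in bijection with certain divisors of the minimal polynomial of $\theta$.

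For the direction ``monogenic $\Rightarrow$ finitely many intermediate fields,'' I would set up the assignment $\ell' \mapsto m_{\ell'}(x)$, where $m_{\ell'}(x) \in \ell'[x]$ is the minimal polynomial of $\theta$ over $\ell'$, and show it is injective. The main obstacle is verifying that $\ell'$ is recovered as the subfield $\ell''$ generated over $k$ by the coefficients of $m_{\ell'}$: we have $\ell'' \subseteq \ell'$, but $m_{\ell'} \in \ell''[x]$ already has $\theta$ as a root, so the degree count $[\ell:\ell''] \leq \deg m_{\ell'} = [\ell:\ell']$ forces $\ell'' = \ell'$. Finiteness then follows because $m(x)$, the minimal polynomial of $\theta$ over $k$, has only finitely many monic divisors in the UFD $\ell[x]$, and each $m_{\ell'}$ divides $m$ in $\ell[x]$.

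For the converse, I would split on whether $k$ is finite or infinite. The finite case is immediate: $\ell$ is then also finite, $\ell^\times$ is cyclic, and any generator of this cyclic group provides a monogenerator. For infinite $k$, the strategy is induction on the number of generators, reducing to the two-generator case $\ell' = k(\alpha, \beta)$ (whose intermediate fields, being intermediate in $\ell/k$, are still finite in number). Here I would consider the family $\{\, k(\alpha + c\beta) \,\}_{c \in k}$ of intermediate subfields; by hypothesis the infinite parameter $c$ yields repetitions, so there exist distinct $c, c' \in k$ with $k(\alpha + c\beta) = k(\alpha + c'\beta)$. Subtracting the two generators gives $(c - c')\beta$ in this common subfield, hence $\beta$ and then $\alpha$ lie in it, yielding $k(\alpha, \beta) = k(\alpha + c\beta)$.

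The ``in particular'' statement reduces to showing a finite separable extension $\ell/k$ has finitely many intermediate subfields. I would embed $\ell$ in its Galois closure $L/k$; by the Galois correspondence, intermediate fields of $L/k$ correspond bijectively to subgroups of the finite group $\Gal(L/k)$, hence are finite in number. Intermediate fields of $\ell/k$ form a subset of these, so are also finite in number, and the previously established equivalence concludes that $\Spec \ell \to \Spec k$ is monogenic.
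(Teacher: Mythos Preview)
Your proof is correct and is the standard textbook argument for the primitive element theorem. Note, however, that the paper does not actually supply a proof of this statement: it is quoted as ``well-known'' and stated without proof, serving only as the base case for the subsequent analysis of monogenicity of local Artinian algebras over a field. There is therefore no paper proof to compare against; your argument simply fills in what the paper omits, and does so in the expected way.
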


We next consider the monogenicity of $S' \to S$ when $S'$ is a nilpotent thickening of $\Spec \ell$, leaving $S = \Spec k$ fixed. A key ingredient is a study of the square zero extensions of $\Spec \ell$.

We remark for comparison that the proof below does not consider a nilpotent thickening of the base $S \to \tilde{S}$. In fact, if $S \to \tilde S$ is a nilpotent closed immersion with $S' = S \times_{\tilde S} \tilde S'$, any monogenerator $\theta : S \to \Gen_{S'/S}$ extends to $\tilde S$ locally in the \'etale topology. This results from the smoothness of $\Gen_{\tilde S'/\tilde S} \to \tilde S$. \\

\noindent \textbf{Theorem \ref{thm:local_artin_monogenicity}.} 
Let $S' \to S$ be induced by $k \to B$ where $k$ is a field and $B$ is a local Artinian $k$-algebra with residue field $\ell$ and maximal ideal
$\gm$. Then $S' \to S$ is monogenic if and only if
\begin{enumerate}
  \item $\Spec \ell \to \Spec k$ is monogenic;
  \item $\dim_{\ell} \gm/\gm^2 \leq 1$; and
  \item If $\dim_{\ell} \gm/\gm^2 = 1$ and $\ell/k$ is inseparable, then
  \[
    0 \to \gm/\gm^2 \to B/\gm^2 \to \ell \to 0
  \]
  is a non-split extension.
\end{enumerate}

\begin{proof}
If the tangent space $(\gm/\gm^2)^\vee$ has dimension greater than 1, then no map $S' \to \AA^1_S$ can be injective on tangent vectors as is required for a closed immersion.

On the other hand, if the tangent space of $S'$ has dimension $0$, we have $B = \ell$, and the result is true by hypothesis.

Now suppose the tangent space of $S'$ has dimension $1$. A morphism $S' \to \AA^1_S$ is a closed immersion if and only if it is universally closed, universally injective, and unramified\cite[Tag 04XV]{sta}.

Choose a closed immersion $\Spec \ell \to \AA^1_k$. 
Equivalently, write
$\ell = k[t]/(f(t))$ where $f(t)$ is the monic minimal polynomial of some element $\theta \in \ell$. 
Since $\Spec \ell \to S'$ is a universal homeomorphism\cite[Tag 054M]{sta}, any
extension of $\Spec \ell \to \AA^1_k$ to $S' \to \AA^1_k$ inherits the properties of being universally injective and universally closed from $\Spec \ell \to \AA^1_k$.

Whether such an extension $S' \to \AA^1_k$ is ramified can be checked on the level of tangent vectors\cite[Tag 0B2G]{sta}. It
follows that a morphism $S' \to \AA^1_k$ is a closed immersion if and only if its restriction to the vanishing of $\gm^2$ is. On the other hand, any map $V(\gm^2) \to \AA^1_k$ extends to $S' \to \AA^1_k$ (choose a lift of the image of $t$ arbitrarily).
Therefore, it suffices to consider the case that $S'$ is a square zero extension of $\Spec \ell$.

By hypothesis, we have a presentation of $\ell$ as $k[t]/(f(t))$. We conclude with some elementary deformation theory, see for example \cite[\S 1.1]{sernesi}. We have a square zero extension of $\ell$
\[
  0 \to (f(t))/(f(t)^2) \to k[t]/(f(t))^2 \to \ell \to 0.
\]
By assumption, $B$ is also a square zero extension of $\ell$:
\[
  0 \to \gm \to B \to \ell \to 0.
\]
By \cite[Proposition 1.1.7]{sernesi}, there is a morphism of $k$-algebras $\phi : k[t]/(f(t))^2 \to B$ inducing the identity on $\ell$. Since $(f(t))/(f(t)^2) \cong \ell$ as a $k[t]/(f(t)^2)$ module, 
$\phi$ either restricts
to an isomorphism $f(t)/(f(t))^2 \to \gm$ or else the zero map. In the former
case, the composite $k[t] \to k[t]/(f(t))^2 \to B$ is a surjection, and we are done. In the latter case, $B$ is the pushout of the extension $k[t]/(f(t))^2$ along $(f(t))/(f(t)^2) \overset{0}{\to} \gm$, so $B$ is the split extension $\ell[\epsilon]/\epsilon^2$.

If $\ell/k$ is separable, then the extension $k[t]/(f(t))^2$ is itself split \cite[Proposition B.1, Theorem 1.1.10]{sernesi},
i.e. there an isomorphism $k[t]/(f(t))^2 \cong \ell[\epsilon]/\epsilon^2$. Composing with $k[t] \to k[t]/(f(t)^2)$ gives the required monogenerator.

If $\ell/k$ is inseparable and $B \cong \ell[\epsilon]/\epsilon^2$, we will show that $S' \to S$ is not monogenic. Any generator for $\ell[\epsilon]/\epsilon^2$ over $k$ must also be a generator for $\ell[\epsilon]/\epsilon^2$ over the
maximal separable subextension $k'$ of $\ell/k$, so we may assume
that $\ell/k$ is purely inseparable. Moreover, any generator $\theta$
for $\ell[\epsilon]/\epsilon^2$ over $k$ must reduce modulo $\epsilon$ to a generator $\ol{\theta}$ of $\ell/k$. Since $\ell/k$ is purely inseparable, the minimal polynomial $f(t)$ of $\ol{\theta}$ satisfies $f'(t) = 0$. Note $\theta = \ol{\theta} + c\epsilon$
for some $c \in \ell$.
Since $\theta$ is assumed to be a monogenerator, there is a polynomial $g(t) \in k[t]$ such that $\epsilon = g(\theta)$. Reducing, $\ol{g(\theta)} = g(\ol{\theta}) = 0$, so $g(t) = q(t)f(t)$ for some $q(t) \in k[t]$. Then
\begin{align*}
 g(\theta) &= g(\ol{\theta}) + g'(\theta)c\epsilon \\
   &= 0 + q'(\ol{\theta})f(\ol{\theta})c\epsilon + q(\ol{\theta})f'(\ol{\theta})c\epsilon \\
   &= 0,
\end{align*}
a contradiction. We conclude that in this case $S' \to S$ is not monogenic.
\end{proof}

\begin{remark} \label{rmk:local_artin_mono_special_cases}
In the case that $k$ is perfect, the first and third conditions hold automatically. If $S'$ is regular of dimension 1 the second condition is trivial.
\end{remark}


\noindent \textbf{Theorem \ref{thm:artin_monogenicity}.}
Suppose $S' \to S$ is induced by $k \to A$ where $k$ is a field and $B$ is an Artinian $k$-algebra. Write $B = \prod_i B_i$ where the $B_i$ are local artinian $k$-algebras with respective residue fields $\ell_i$. Then $S' \to S$ is monogenic if and only if
\begin{enumerate}
  \item \label{artinmono1} $\Spec B_i \to S$ is monogenic for each $i$;
  \item \label{artinmono2} for each finite extension $\ell$ of $k$, $S'$ has fewer points with residue field isomorphic to $\ell$ than $\mathbb{A}^1_S$.
\end{enumerate}

\begin{proof}
Note that a map $\bigsqcup_i \Spec B_i \to \AA^1_S$ is a closed immersion if and only
if each map $\Spec B_i \to \AA^1_S$
is a closed immersion and the closed immersions are disjoint: $\Spec B_i \times_{\AA^1_S} \Spec B_j = \varnothing$
for all $i \neq j$. This is equivalent to the statement that
$A[t] \to \prod_{i} B_i$ is surjective if and only if $A[t] \to B_i$ is surjective for each $i$ and $B_i \otimes_{A[t]} B_j = 0$
whenever $i \neq j$, which follows quickly in turn from the Chinese remainder theorem.

The proof of Theorem \ref{thm:local_artin_monogenicity} shows that a closed immersion $\Spec B_i \to \AA^1$ can be chosen with image any of the points of $\AA^1_S$
with residue field $\ell_i$.
Then the condition on numbers of points is exactly what we need for the images of the $\Spec B_i$s not to overlap without running out of points. (Since topologically, the components are single points.)
\end{proof}

\begin{remark}
Condition \eqref{artinmono2} is trivial in the case that the residue fields of $S$ are infinite, highlighting the relative simplicity of monogenicity in the geometric context.

If $S' \to S$ is instead induced by an extension of number rings,
then Remark \ref{rmk:local_artin_mono_special_cases} implies condition \eqref{artinmono1} is trivial. In particular, an extension of $\mathbb{Z}$ has common
index divisors if and only if there is ``too much prime splitting" in the sense
of condition \eqref{artinmono2}. This recovers the theorem of \cite{Hensel1894} (also see \cite[Cor. to Thm. 3]{Pleasants}) that $\gp$ is a common index divisor if and only if there are more primes in $\ZZ_L$ above $\gp $ of residue class degree $f$ than there are irreducible polynomials of degree $f$ in $k(\gp)[x]$ for some positive integer $f.$
\end{remark}

\begin{corollary} \label{cor:number_rings_mono_over_geo_points}
If $S' \to S$ is induced by an extension of number rings $\mathbb{Z}_L/\mathbb{Z}_K$, then $S' \to S$ is monogenic over geometric points.
\end{corollary}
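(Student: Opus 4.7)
The plan is to apply Theorem \ref{thm:artin_monogenicity} to each geometric fiber, exploiting the Dedekind property of $\ZZ_L$ and the fact that geometric residue fields are algebraically closed (hence infinite and perfect).

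First I would fix a geometric point $\Spec k \to \Spec \ZZ_K$. This factors through some point $\gp \in \Spec \ZZ_K$, and by enlarging we may assume $k$ is an algebraic closure of $k(\gp)$. The geometric fiber is $B = \ZZ_L \otimes_{\ZZ_K} k$, a finite-dimensional $k$-algebra. If $\gp = (0)$, then $B = L \otimes_K k \cong k^n$ since $L/K$ is separable, and there is nothing to show. Otherwise, $\gp \ZZ_L = \gP_1^{e_1} \cdots \gP_r^{e_r}$ and
\[
  B \cong \prod_i \bigl( (\ZZ_L)_{\gP_i}/\gP_i^{e_i}\bigr) \otimes_{k(\gp)} k.
\]
Since each residue field extension $k(\gP_i)/k(\gp)$ is separable (we are over finite fields), $k(\gP_i) \otimes_{k(\gp)} k \cong k^{f_i}$, and so each factor further decomposes into $f_i$ local Artinian $k$-algebras, each with residue field $k$.

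Next I would verify the two hypotheses of Theorem \ref{thm:artin_monogenicity}. For condition \eqref{artinmono1}, each local piece has the form $A_i = \bigl((\ZZ_L)_{\gP_i}/\gP_i^{e_i}\bigr) \otimes_{k(\gP_i)} k$, a local Artinian $k$-algebra with residue field $k$. Its maximal ideal is $\gm_i = (\gP_i/\gP_i^{e_i}) \otimes k$, and since $\ZZ_L$ is Dedekind, $\gP_i (\ZZ_L)_{\gP_i}$ is principal, whence $\dim_k \gm_i/\gm_i^2 \leq 1$. As $k$ is algebraically closed, hence perfect, Remark \ref{rmk:local_artin_mono_special_cases} reduces Theorem \ref{thm:local_artin_monogenicity} to exactly this tangent-space bound, so each $\Spec A_i \to \Spec k$ is monogenic.

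For condition \eqref{artinmono2}, note that $\Spec B$ has only finitely many closed points, each with residue field $k$, while $\AA^1_k$ has infinitely many closed points with residue field $k$ (since $k$ is infinite). So the point-counting inequality holds trivially. Both hypotheses of Theorem \ref{thm:artin_monogenicity} are satisfied, hence $B/k$ is monogenic, as needed.

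The argument is essentially a bookkeeping exercise; the only mild subtlety is organizing the base change $\otimes_{\ZZ_K} k$ through the intermediate residue field $k(\gp)$ and invoking separability of $k(\gP_i)/k(\gp)$ to split the local factors. The real content, and arguably the reason the corollary holds, is the combination of (i) the Dedekind property of $\ZZ_L$ forcing each geometric local component to be a ``thin'' (embedding dimension at most one) local Artinian algebra and (ii) the infinitude of an algebraically closed field eliminating the point-counting obstruction that is the source of common index divisors in the arithmetic setting.
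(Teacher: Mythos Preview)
Your proof is correct and follows essentially the same route as the paper: both invoke Theorems~\ref{thm:local_artin_monogenicity} and~\ref{thm:artin_monogenicity}, use the Dedekind property of $\ZZ_L$ for the tangent-space bound, and use infinitude of the algebraically closed field for the point-counting condition. The organization differs slightly: the paper first shows each local piece over $k(\gp)$ is monogenic and then base-changes (so that the geometric local pieces inherit monogenicity as closed subschemes of something already monogenic), whereas you decompose directly over $\ol{k(\gp)}$ via prime factorization and verify the tangent-space condition there by hand.

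One small point worth making explicit: when you write $A_i = \bigl((\ZZ_L)_{\gP_i}/\gP_i^{e_i}\bigr)\otimes_{k(\gP_i)} k$, you are implicitly using that $R_i \coloneqq (\ZZ_L)_{\gP_i}/\gP_i^{e_i}$ carries a $k(\gP_i)$-algebra structure, not just a $k(\gp)$-algebra structure. This is true---$R_i$ is Artinian local hence complete, and $k(\gP_i)/k(\gp)$ is separable, so the residue field lifts (Hensel or Cohen structure)---but it deserves a sentence. The paper's base-change-then-compose argument sidesteps this by never needing to name the geometric local pieces explicitly.
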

\begin{proof}
Let $p$ be a point of $S$. Let $A = \ZZ_L \otimes_{\ZZ_K} k(p)$ be the ring for the fiber of $S'$ over $\Spec k(p)$. Note that $k(p)$ is either of characteristic 0 or finite, so $k(p)$ is perfect.
Decompose $A$ into a direct product of local Artinian $k(p)$ algebras $A_i$.
Since $k(p)$ is perfect, conditions (1) and (3) of Theorem \ref{thm:local_artin_monogenicity} hold for $\Spec A_i \to \Spec k(p)$. Condition (2) holds as well since $S'$ is regular of dimension 1. Therefore
$\Spec A_i \to \Spec k(p)$ is monogenic for each $i$.

Now consider the base change of $S'$ to the algebraic closure $\ol{k(p)}$ of $k(p)$. Write $B$ for the ring of functions $\ZZ_L \otimes_{\ZZ_K} \ol{k(p)}$ of
this base change, and write $B$ as a product of local Artinian algebras $B_j$. For each $i$ we have that $\Spec A_i \otimes_{k(p)} \ol{k(p)} \to \Spec \ol{k(p)}$ is monogenic. Each $\Spec B_j$ is a closed subscheme of exactly one of the $\Spec A_i \otimes_{k(p)} \ol{k(p)}$s, so by composition, $\Spec B_j \to \Spec \ol{k(p)}$ is monogenic for each $j$. This gives us condition (1) of Theorem \ref{thm:artin_monogenicity} for $S' \times_S \Spec \ol{k(p)} \to \Spec \ol{k(p)}$. Since $\ol{k(p)}$ is infinite, condition (2) holds triviallly. We conclude that $S' \times_S \Spec \ol{k(p)} \to \Spec \ol{k(p)}$ is monogenic, as required.
\end{proof}


\section{\'Etale maps, configuration spaces, and monogenicity}\label{sec:confspsmonetalecase}

This section concerns maps $\pi : S' \to S$ that are \emph{\'etale}, or unramified. Locally, the monogenicity space becomes a configuration space, classifying arrangements of $n$ distinct points on a given topological space. Philosophically, $\Gen_{S'/S}$ therefore regards $S' \to S$ as a twisted family of points to be configured in $\Aff^1$. We are led to interpret $\Gen_{S'/S}$ as an arithmetic
refinement of the configuration space of $\Aff^1$. In Remark~\ref{rem:braidgroupgrothteichetalefundgp}, we see that an action of the absolute Galois group $\text{Gal}(\overline{\QQ}/\QQ)$ on the \'etale fundamental group of $\Gen_{S'/S}$ has been observed in anabelian geometry. We end Subsection \ref{ssec:etalecase} with a handful of exotic applications in other areas. 

All extensions $S' \to S$ sit somewhere between the \'etale case and jet spaces $\Spec A[\epsilon]/\epsilon^{n} \to \Spec A$ (see \cite[Example 2.7, 4.3, 4.16]{abhs_paper_1}), between being totally unramified and totally ramified. 
In Section \ref{ssec:whenisetale}, we recall a general construction of the discriminant which cuts out the locus of ramification. 
Specifically, \cite[\S 6]{poonenmodspoffiniteflat} says that our description in the \'etale case holds precisely away from the vanishing of the discriminant. The discriminant plays a similar role in the classical case when investigating the monogenicity of an extension defined by a polynomial. We end with some remarks on using stacks to promote a ramified cover of curves to an \'etale cover of stacky curves as in \cite{costello}.

\subsection{The trivial cover}\label{ssec:trivialcover}

We start with the simplest case of an \'etale cover: the trivial cover of
$S$ by several copies of itself. We work rather concretely and revisit the
general situation with more sophistication in the next subsection.
Write $\num{n} = \{1, 2, \dots, n\}$ and $\num{n}_S = \bigsqcup_{1 \leq i \leq n} S \to S$ for the trivial degree $n$ finite \'etale cover.

\begin{example}[Monogenicity of a trivial cover] \label{ex:mono_of_trivial_cover}
Let $S' = \num{n}_S$ and let $\pi : \num{n}_S \to S$ be the
map induced by the identity on each copy of $S$. Given a commutative diagram
\[
\begin{tikzcd}
    & \mathbb{A}^1_S \ar[d] \\
    \langle n\rangle_S \ar[ur, "\sqcup_{i} f_i"] \ar[r, "\pi"] & S,
\end{tikzcd}
\]
one expects that the map $\sqcup_{i} f_i$ will be a closed immersion
if and only if $f_i(s) \neq f_j(s)$ for all $1 \leq i < j \leq n$ and $s \in S$. A computation in coordinates will confirm this.

We will use the notation of Definition \ref{def:local_coords} and Theorem \ref{thm:gen_local_coords} to compute $\Gen_{S'/S}$ in local coordinates.
Working Zariski locally on $S$, we may assume that $S' = \Spec A^n$, $S = \Spec A$, and that $e_1, \ldots, e_n$ are the standard basis vectors for $A^n$. 
Let $x_1,\ldots, x_n$ be the corresponding coordinates for $\WR_{S'/S}$, so that
\[
  \WR_{S'/S} \cong \AA^n_S.
\]
This isomorphism identifies the $T$-point $(x_1,\ldots,x_n)$ of $\AA^n_S$
with the map $\sqcup_{i} x_i : \langle n \rangle_T \to \AA^1_T$ whose restriction to the $i$th copy of $T$ in $\langle n \rangle_T$ is $x_i$.

Next, observe that in $\mathscr{O}_{\WR_{S'/S}}(S)$
\[
  (x_1e_1 + \cdots + x_ne_n)^j = x_1^je_1 + \cdots + x_n^je_n
\]
for all $0 \leq j \leq n - 1$. Therefore, the matrix of coefficients
$M(e_1, \ldots, e_n)$ is the Vandermonde matrix with $i$th column given by $\begin{bmatrix} 1 & x_i & x_i^2 & \cdots & x_i^{n - 1}\end{bmatrix}^{T}$.
The index form is then the well-known Vandermonde determinant:
\[
  \localindex(e_1,\ldots, e_n)(x_1,\ldots, x_n)  = \det M(e_1, \ldots, e_n) = \pm\prod_{i < j} (x_i - x_j).
\]
The index form vanishes therefore on the so-called \emph{fat diagonal} $\hat{\Delta} \subseteq \AA^n$, given by the union of all loci $V(x_i - x_j)$ where two coordinates are equal.

It follows that
\[
  \Gen_{S'/S} \cong \SSpec{S}{\OO_S\left[x_1,\ldots,x_n, \prod_{i < j}(x_i - x_j)^{-1}\right]},
\]
the complement in $\AA^n_S$ of $\hat{\Delta}$. This space is otherwise known
as the space of \emph{ordered configurations} of $n$ points, $\Conf_n(\AA^1_S) \to S$.
\end{example}

Slightly more abstract reasoning yields a similar result if $X$ is any quasi-projective $S$ scheme.

\begin{example}[$\Gen_{X,S'/S}$ for a trivial cover]
Let $X \to S$ be a quasiprojective map, $S' = \num{n}_S$ and let $\pi : \num{n}_S \to S$
be the map induced by the identity on each copy of $S$. Observe that if $T \to S$ is an $S$-scheme, we have natural
identifications
\begin{align*}
  \WR_{X,S'/S}(T) &\cong \left\{ \text{ maps of $T$-schemes }f : \langle n \rangle_T \to X \times_S T \right\} \\
    &\cong \left\{ \text{ $n$-tuples of maps of $T$-schemes }f_i : T \to X \times_S T \text{, where }i = 1, \ldots, n\right\} \\
    &\cong \left\{ \text{ $n$-tuples of maps of $S$-schemes }f_i : T \to X \text{, where }i = 1, \ldots, n\right\} \\
    &\cong \underbrace{X\times_S X\times_S \cdots \times_S X}_{n\text{-times}}(T) = X^{\times_S^n}(T),
\end{align*}
so we may identify $\WR_{X, S'/S}$ with the $n$-fold fiber product of $X$ over $S$.

For each $1 \leq i < j \leq n$, we can construct a subscheme $\Delta_{i,j}$ of $X^{\times_S^n}$ consisting of the
points whose $i$th and $j$ coordinates are equal. We let the fat diagonal $\hat\Delta$ be the scheme theoretic union
of the subschemes $\Delta_{i,j}$. Since $X \to S$ is separated, this fat diagonal is a closed subscheme of $X^{\times_S^n}$.

Observe that any morphism of $T$-schemes $f = \bigsqcup_{i = 1}^n f_i : \num{n}_T \to X \times_S T$ in $\WR_{X,S'/S}(T)$
will be proper and unramified as $\pi : \num{n}_T \to T$ is proper and unramified and $X \times_S T \to T$ is separated. In addition, for each point $x \in \num{n}_T$, the induced field extension $\kappa(x) \supseteq \kappa(f(x))$ is an isomorphism, since the same is true of the map $\pi : \num{n}_T \to T$. 

Now, by \cite[Tag 01S4, (2)$\iff$(3) ]{sta} and \cite[Tag 04XV, (1)$\iff$(3)]{sta}, $f$ is a closed immersion if and only if it is injective. This happens if and only if the corresponding function $\prod_{i = 1}^n f_i : T \to X^{\times_S^n}$ factors through the complement of the fat diagonal. Therefore $\Gen_{X, S'/S} \cong X^{\times_S^n} - \hat{\Delta}.$
\end{example}

\subsection{The case of \'etale $S' \to S$}\label{ssec:etalecase}

Consider the category $\Sch{\ast}$ of schemes over a final scheme $\ast$ equipped with the \'etale topology. For example, take $\ast = \Spec \ZZ$ or $\Spec \CC$. Write $\Sigma_n$ for the symmetric group on $n$ letters and $B\Sigma_n$ for the stack on $\Sch{\ast}$ of \'etale $\Sigma_n$-torsors. 

Regard $\Sigma_{n-1}$ as the subgroup of $\Sigma_n$ of permutations fixing the $n$th letter,
and let $B\Sigma_{n-1} \to B\Sigma_n$ be the map induced by the inclusion. The isomorphism class of the resulting map of classifying spaces is unchanged if $\Sigma_{n-1}$ is taken as the subgroup fixing some other letter, since resulting inclusion map only differs from this one by conjugation. The morphism $B\Sigma_{n-1} \to B\Sigma_n$ is the universal $n$-sheeted cover in the following sense.

\begin{lemma}[{\cite[Lemma 2.2.1]{costello}, \cite[Lemma 3.2]{mycostellocorrection}}]
Let $n$ be a positive integer. Let $\mathcal{C}$ be the fibered category over $\Sch{\ast}$ with:
\begin{enumerate}
    \item objects the finite \'etale morphisms $\pi : S' \to S$ of degree $n$;
    \item arrows the cartesion diagrams
    \[
    \begin{tikzcd}
        T' \ar[r] \ar[d] & S' \ar[d] \\
        T \ar[r] & S;
    \end{tikzcd}
    \]
    \item projection to $\Sch{\ast}$ given by $S' \to S \mapsto S$.
\end{enumerate}
Then there is an equivalence of fibered categories $B\Sigma_n \to \mathcal{C}$ given by taking a map $f : S \to B\Sigma_n$ to
the pullback of $B\Sigma_{n-1} \to B\Sigma_n$ along $f$.
\end{lemma}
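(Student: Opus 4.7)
My plan is to prove the lemma by writing down the inverse functor explicitly and then checking the two compositions are naturally isomorphic, using étale-local triviality to reduce to the trivial cover.

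First, I would unwind the target of the claimed equivalence. A map $f : S \to B\Sigma_n$ is the same as a $\Sigma_n$-torsor $P \to S$ (in the étale topology). Under this identification, pullback along $f$ of $B\Sigma_{n-1} \to B\Sigma_n$ sends $P$ to the associated fiber bundle
\[
  P \times^{\Sigma_n} (\Sigma_n/\Sigma_{n-1}) \;=\; P/\Sigma_{n-1} \;\longrightarrow\; S,
\]
since the fiber of $B\Sigma_{n-1} \to B\Sigma_n$ over the trivial torsor is the coset space $\Sigma_n/\Sigma_{n-1}$, which has $n$ elements. The quotient $P/\Sigma_{n-1}$ exists as a scheme (étale locally it is just $\langle n\rangle_S$) and is a finite étale cover of $S$ of degree $n$. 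This yields the functor $\Phi : B\Sigma_n \to \mathcal{C}$ appearing in the statement.

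Second, I would construct a candidate quasi-inverse $\Psi : \mathcal{C} \to B\Sigma_n$ by sending $\pi : S' \to S$ to the \emph{frame sheaf}
\[
  P_\pi \;:=\; \underline{\mathrm{Isom}}_S\bigl(\langle n\rangle_S,\, S'\bigr),
\]
i.e.\ the fppf sheaf on $S$ whose $T$-points are the isomorphisms $\langle n\rangle_T \to S' \times_S T$ of $T$-schemes. The group $\Sigma_n$ acts on $P_\pi$ by precomposition (permuting the copies in $\langle n\rangle_T$). Since every finite étale map of degree $n$ is étale-locally isomorphic to the trivial cover $\langle n\rangle_S \to S$ (\cite[04HN]{sta}), $P_\pi$ becomes trivial after an étale cover of $S$ and is therefore a $\Sigma_n$-torsor. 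Because $\Psi$ respects base change of $S$, it upgrades to a morphism of fibered categories.

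Third, I would verify $\Phi \circ \Psi \simeq \mathrm{id}_\mathcal{C}$ and $\Psi \circ \Phi \simeq \mathrm{id}_{B\Sigma_n}$ via natural evaluation maps. For any étale $\pi : S' \to S$ of degree $n$ one has a canonical evaluation morphism $P_\pi \times^{\Sigma_n} \langle n\rangle \to S'$ sending $(\varphi, i)$ to $\varphi(i)$, and this is an isomorphism after passing to an étale cover trivializing $P_\pi$; conversely, for any $\Sigma_n$-torsor $P/S$ there is a canonical map $P \to \underline{\mathrm{Isom}}_S(\langle n\rangle_S, P/\Sigma_{n-1})$ sending a $T$-point $p$ of $P$ to the isomorphism $i \mapsto [p\cdot \sigma_i]$, where $\sigma_i \in \Sigma_n$ is any lift of the coset $i \in \Sigma_n/\Sigma_{n-1}$. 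Again, this is an isomorphism étale-locally, where both sides reduce to the trivial cover and the trivial torsor. I would check naturality of both evaluation maps with respect to pullback squares, which is routine from the functoriality of $\underline{\mathrm{Isom}}$ and of associated bundles.

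The only real obstacle is the descent/étale-local verification: one must be careful that these constructions commute with arbitrary base change and that the natural transformations defined above really are isomorphisms of sheaves (equivalently, étale-locally isomorphisms of schemes). This is exactly what \cite[04HN]{sta} buys us. Everything else is bookkeeping about torsors and the defining universal property of the forgetful map $B\Sigma_{n-1} \to B\Sigma_n$.
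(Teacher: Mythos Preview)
Your proposal is correct and follows essentially the same approach as the paper's (commented-out) proof sketch: both construct the inverse functor via the frame sheaf $\underline{\mathrm{Isom}}$ (the paper uses $\underline{\mathrm{Isom}}_S(S',\langle n\rangle_S)$ rather than your $\underline{\mathrm{Isom}}_S(\langle n\rangle_S,S')$, but this only changes whether the $\Sigma_n$-action is on the left or right), recover the cover from a torsor via the contracted product $\langle n\rangle \wedge^{\Sigma_n} P$, and appeal to \cite[04HN]{sta} for \'etale-local triviality. In the published version the paper simply cites \cite{costello} and \cite{mycostellocorrection} without proof; your write-up is more thorough in that you explicitly verify both compositions are naturally isomorphic to the identity, which the paper's sketch leaves implicit.
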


Recall that pullback squares of schemes 
\[\begin{tikzcd}
T' \ar[r] \ar[d] \pb      &S' \ar[d]         \\
T \ar[r]       &S
\end{tikzcd}\]
induce identifications 
\[\WR_{T'/T} \simeq \WR_{S'/S} \times_S T  \quad \ \ \ \text{ and } \ \ \  \quad \Gen_{1, T'/T} \simeq \Gen_{1, S'/S} \times_S T.\]
Reduce thereby to the universal $n$-sheeted finite \'etale cover $S' = B\Sigma_{n-1}$, $S = B\Sigma_n$. 
Each has an affine line $\Aff^1_{B\Sigma_n} = [\Aff^1/\Sigma_n]$ obtained via quotienting by the trivial action.\\

\noindent \textbf{Theorem \ref{thm:three_isoms}.}
There are isomorphisms
\begin{align*}
  \WR_{B\Sigma_{n-1}/B\Sigma_n} &\cong [\AA^n / \Sigma_n], \\
  \NGen_{B\Sigma_{n-1}/B\Sigma_n}  &\cong [\hat{\Delta} / \Sigma_n],  \\
  \text{ and } \Gen_{B\Sigma_{n-1}/B\Sigma_n} &\cong [(\AA^n - \hat{\Delta}) / \Sigma_n]
\end{align*}
in which the action by $\Sigma_n$ is in each case the appropriate restriction of the permutation action on coordinates of $\AA^n$.

\begin{proof}
We observe that the $n$-sheeted cover associated to the trivial torsor $* \to B\Sigma_n$ is the trivial cover $\langle n \rangle \to *$. Therefore, by our work in the case of a trivial cover
\begin{align*}
  \WR_{B\Sigma_{n-1}/B\Sigma_n} \times_{B\Sigma_n} * &\cong \WR_{\langle n \rangle / *} \cong \AA^n \\
  \NGen_{B\Sigma_{n-1}/B\Sigma_n} \times_{B\Sigma_n} * &\cong \NGen_{\langle n \rangle / *} \cong \hat{\Delta} \\
  \Gen_{B\Sigma_{n-1}/B\Sigma_n} \times_{B\Sigma_n} * &\cong \Gen_{\langle n \rangle / *} \cong \AA^n - \hat{\Delta} \cong \Conf_n(\AA^1).
\end{align*}

There is a $\Sigma_n$ action on $\WR_{\langle n \rangle / *}$ so that $\WR_{B\Sigma_{n-1} / B\Sigma_n}$ is the stack quotient of $\WR_{\langle n \rangle / *}$ by $\Sigma_n$. Pulling back $\WR_{\langle n \rangle / *} \to *$ to $* \times_{B\Sigma_n} *$ in both ways shows that the action is given by permuting the sheets of $\langle n \rangle$. Under the isomorphism of $\WR_{\langle n \rangle / *}$ with $\AA^n$ of Example \ref{ex:mono_of_trivial_cover}, the action is given by permuting the coordinates.

We conclude
\begin{align*}
  \WR_{B\Sigma_{n-1}/B\Sigma_n} &\cong [\AA^n / \Sigma_n] \\
  \NGen_{B\Sigma_{n-1}/B\Sigma_n}  &\cong [\hat{\Delta} / \Sigma_n] \\
  \Gen_{B\Sigma_{n-1}/B\Sigma_n} &\cong [(\AA^n - \hat{\Delta}) / \Sigma_n]
\end{align*}
in which the action by $\Sigma_n$ is in each case the appropriate restriction of the permutation action on coordinates of $\AA^n$. 
\end{proof}

The space $\Gen_{B\Sigma_{n-1}/B\Sigma_n}$ is also interpretable as
the space of \emph{unordered configurations of points}:
\[
  \Gen_{B\Sigma_{n-1}/B\Sigma_n} \cong \UConf_n(\AA^1) = \{ (x_1, \ldots, x_n) \mid x_i \neq x_j \} / \Sigma_n.
\]

Observe that the fat diagonal $\hat{\Delta}$ is exactly the locus of $\Aff^n$ where $\Sigma_n$ has stabilizers. The coarse moduli space of $\stquot{\Aff^n/\Sigma_n}$ is $\Aff^n$ by the fundamental theorem of symmetric functions,
with the composite
\[\Aff^n \to \stquot{\Aff^n/\Sigma_n} \to \Aff^n; \quad \quad \quad \overline{x} = (x_1, \dots, x_n) \mapsto (s_1(\overline{x}), s_2(\overline{x}), \dots, s_n(\overline{x}))\]
given by the elementary symmetric polynomials $s_i(x_1, \dots, x_n)$ \cite[\S 16.1-2]{artinalgebra}. The composite sends a list of $n$ roots to the coefficients of the monic polynomial of degree $n$ vanishing at those roots, up to sign:
\[(t-x_1) \cdots (t-x_n) = t^n - s_1(\overline{x})t^{n-1} + s_2(\overline{x})t^{n-2} - \cdots \pm s_n(\overline{x}).\]
The assignment is plainly invariant under relabeling the $x_i$ by $\Sigma_n$. 

The map to the coarse moduli space $\stquot{\Aff^n/\Sigma_n} \to \Aff^n$ is an isomorphism precisely over $\Gen_{1, B\Sigma_{n-1}/B\Sigma_n}$. The image of $\NGen_{B\Sigma_{n-1}/B\Sigma_n}$ in $\Aff^n$ is the closed subscheme cut out by the discriminant of the above polynomial
\begin{align*}
\disc\left(\prod_{i=1}^n\left(t-x_i\right)\right) &=D(s_1(\overline{x}), s_2(\overline{x}), \dots, s_n(\overline{x}))   \\
        &=\prod_{i < j} (x_i - x_j)^2,
\end{align*}
the square of the Vandermonde determinant. The resulting divisor is the pushforward of $\NGen_{B\Sigma_{n-1}/B\Sigma_n}$ to the coarse moduli space $\Aff^n$.

We summarize the above discussion for general targets $X$ in the place of $\Aff^1$:

\begin{theorem}\label{thm:etgenisconf}

Let $X$ be a quasiprojective scheme, and let $X_{B\Sigma_n} \coloneqq [X / \Sigma_n] = X \times B \Sigma_n$ be the stack quotient by the trivial $\Sigma_n$ action. 
\begin{itemize}
    \item The Weil Restriction is the stacky symmetric product:
    \[\WR_{[X / \Sigma_n], B\Sigma_{n-1}/B\Sigma_n} \coloneqq   \stquot{\Sym n X} = \stquot{X^n/\Sigma_n}.\]
    
    \item The space of monogenerators for $S' = B\Sigma_{n-1}$, $S = B\Sigma_n$ is the $n$th unordered configuration space:
    \[\Gen_{[X / \Sigma_n], B\Sigma_{n-1}/B\Sigma_n} = \UConf_n X \coloneqq   \{(x_1, \dots, x_n) \, | \, x_i \neq x_j \text{ for }i \neq j\}/\Sigma_n.\]
    
    \item The complementary space of non-monogenerators is the stack quotient by $\Sigma_n$ of the ``fat diagonal'' of $n$ points in $X$ which are not pairwise distinct:
    \[\NGen_{[X / \Sigma_n], B\Sigma_{n-1}/B\Sigma_n} = \stquot{\hat{\Delta}_X/\Sigma_n} = \{(x_1, \dots, x_n) \, | \, \text{some }x_i = x_j, i \neq j\}/\Sigma_n.\]
\end{itemize}

\end{theorem}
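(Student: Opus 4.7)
The plan is to imitate the proof of Theorem \ref{thm:three_isoms} almost verbatim, replacing $\AA^1$ by an arbitrary quasiprojective $X$ throughout. The key point is that $\WR$, $\Gen$, and $\NGen$ all commute with arbitrary base change on $S$, so after pulling back along the atlas $\ast \to B\Sigma_n$ we may work with the trivial cover $\num{n} \to \ast$, for which these three spaces have already been computed in the ``$\Gen_{X,S'/S}$ for a trivial cover'' example.

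First I would pull back the universal cover $B\Sigma_{n-1} \to B\Sigma_n$ along $\ast \to B\Sigma_n$. By the lemma identifying $B\Sigma_{n-1} \to B\Sigma_n$ as the universal $n$-sheeted \'etale cover, the pullback is the trivial cover $\num{n} \to \ast$. The target $X_{B\Sigma_n} = X \times B\Sigma_n$ pulls back to $X$ over $\ast$, and the trivial-cover example then provides identifications
\[
\WR_{X,\num{n}/\ast} \cong X^{\times_{\ast}^n}, \qquad \NGen_{X,\num{n}/\ast} \cong \hat{\Delta}_X, \qquad \Gen_{X,\num{n}/\ast} \cong X^{\times_{\ast}^n} - \hat{\Delta}_X.
\]
Since $\ast \to B\Sigma_n$ presents $B\Sigma_n$ as $[\ast/\Sigma_n]$ and each of the three constructions is a stack in the \'etale topology, descent along this atlas expresses each of $\WR$, $\NGen$, $\Gen$ as a stack quotient $[\,\cdot\,/\Sigma_n]$ of the corresponding object on $\ast$.

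It then remains to identify the descent datum, namely the $\Sigma_n$-action. For this I would pull back along the two projections $\ast \times_{B\Sigma_n} \ast \cong \Sigma_n \rightrightarrows \ast$: an element $\sigma \in \Sigma_n$ acts on $\num n \to \ast$ by permuting the $n$ sheets, and under the identification $\WR_{X,\num{n}/\ast} \cong X^{\times_{\ast}^n}$ from the trivial-cover example this corresponds precisely to permutation of the $n$ factors. Because $\hat{\Delta}_X \subset X^{\times_\ast^n}$ and its open complement $\UConf_n X$ are manifestly stable under this permutation action, restricting the quotient yields the asserted descriptions
\[
\WR_{X_{B\Sigma_n}, B\Sigma_{n-1}/B\Sigma_n} \cong \stquot{X^{\times_\ast^n}/\Sigma_n} = \stquot{\Sym{n} X},
\]
together with $\NGen \cong \stquot{\hat{\Delta}_X/\Sigma_n}$ and $\Gen \cong \UConf_n X$.

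The only step requiring any care, and in that sense the ``main obstacle,'' is verifying that the descent action is the na\"ive permutation action rather than some twist of it; but this bookkeeping is exactly what was carried out in the proof of Theorem \ref{thm:three_isoms} for $X = \AA^1$, and nothing in that argument used anything special about $\AA^1$ beyond its being a quasiprojective $\ast$-scheme. Consequently the generalization to arbitrary quasiprojective $X$ is essentially formal.
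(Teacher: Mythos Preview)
Your proposal is correct and matches the paper's approach exactly: the paper presents Theorem~\ref{thm:etgenisconf} as a summary of the preceding discussion (``We summarize the above discussion for general targets $X$ in the place of $\Aff^1$''), with no separate proof, relying on precisely the combination you describe---the general-$X$ trivial-cover example for the fibers and the descent argument of Theorem~\ref{thm:three_isoms} for the $\Sigma_n$-action. Your identification of the only subtle point (that the descent action is the permutation action) and the observation that nothing in that step is special to $\AA^1$ are both on target.
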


\subsection{Implications}

The rest of the section gives sample applications, exotic examples, and directions based on the correspondence with configuration spaces. 

\begin{remark}\label{rem:braidgroupgrothteichetalefundgp}
Classical work on the analogues of monogenicity in complex geometry, such as \cite{hansen_polynomials}, has recognized that embeddings into $\AA^1$-bundles are closely related to the braid group, essentially because the fundamental group of the configuration space of $n$ points in $\CC$ is the braid group on $n$ strands. In the scheme theoretic setting, our best analogue of the fundamental group is the \'etale fundamental group.

The computations above imply that
\[
  \Gen_{1,B\Sigma_{n-1}/B\Sigma_n} \times_{\ZZ} \QQ \cong \UConf(\mathbb{A}^1_\QQ).
\]
In \cite{galoisactiononbraidgroup}, it is computed that this space has \'etale fundamental group a semi-direct product
\[
  \hat{B_{n}} \rtimes G_{\QQ},
\]
where $\hat{B_n}$ is the profinite completion of the braid group on $n$ strands and $G_{\QQ}$ is the absolute Galois group of $\QQ$. As discussed in \cite{galoisactiononbraidgroup}, the conjugation action of $G_\QQ$ on $\hat{B_n}$ extends to an action by the Grothendieck-Teichm\"uller group $\hat{GT}$. Conjecturally, $G_\QQ=\hat{GT}$. Though all varieties over $\QQ$ yield actions of the Galois group $G_\QQ$, we were surprised to rediscover one of its central representations used in number theory. 

\end{remark}


The following result is well-known, as the square of the Steinitz class is the discriminant, and the discriminant is a unit when $S' \to S$ is \'etale. However, we have a pleasant alternative proof in terms of our universal \'etale cover.

\begin{theorem} \label{thm:square_zero_steinitz}
If $S' \to S$ is \'etale, the Steinitz class is 2-torsion in $\Pic(S)$. If $S$ has characteristic $2$, the Steinitz class vanishes.
\end{theorem}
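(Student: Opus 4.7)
The strategy is to reduce the statement to a single computation on the universal \'etale $n$-sheeted cover $p \colon B\Sigma_{n-1} \to B\Sigma_n$, where the relevant Picard group is generated (modulo $\Pic(\ast)$) by characters of $\Sigma_n$, and to then identify the determinant of $p_\ast \OO_{B\Sigma_{n-1}}$ with the sign character.

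First, I would choose a classifying map $f \colon S \to B\Sigma_n$ so that $\pi \colon S' \to S$ is the pullback of $p$ along $f$. Since $p$ is finite locally free, flat base change gives $\pi_\ast \OO_{S'} \cong f^\ast p_\ast \OO_{B\Sigma_{n-1}}$, and taking top exterior powers yields
\[
  \det \pi_\ast \OO_{S'} \cong f^\ast \bigl(\det p_\ast \OO_{B\Sigma_{n-1}}\bigr).
\]
So it suffices to prove that $\mathcal{L} := \det p_\ast \OO_{B\Sigma_{n-1}}$ is $2$-torsion in $\Pic(B\Sigma_n)$, and trivial in characteristic~$2$.

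Next I would identify the line bundle $\mathcal{L}$ on $B\Sigma_n$ as a character of $\Sigma_n$. Pulling back along the trivial torsor $\ast \to B\Sigma_n$ yields the trivial cover $\num{n}_\ast \to \ast$, whose pushforward is $\OO_\ast^{\oplus n}$ with $\Sigma_n$ acting by permutation of summands. Since $\Sigma_n$ acts on $\det \OO_\ast^{\oplus n} \cong \OO_\ast$ by the sign character $\mathrm{sgn} \colon \Sigma_n \to \GG_m$, we conclude that $\mathcal{L}$ corresponds to $\mathrm{sgn}$ under the map $\Hom(\Sigma_n,\GG_m) \to \Pic(B\Sigma_n)$.

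Because $\mathrm{sgn}^2 = 1$, the line bundle $\mathcal{L}^{\otimes 2}$ is trivial on $B\Sigma_n$, so its pullback $(\det \pi_\ast \OO_{S'})^{\otimes 2}$ is trivial on $S$, proving $2$-torsion. If $S$ has characteristic $2$, then $-1 = 1$ in $\GG_m$, so $\mathrm{sgn}$ is already the trivial character and $\mathcal{L}$ is trivial on $B\Sigma_n$; pulling back gives triviality of the Steinitz class. The main point where care is needed is the identification of $\det p_\ast \OO_{B\Sigma_{n-1}}$ with the sign character---after that, the statements are immediate consequences of the abelianization of $\Sigma_n$ being $\ZZ/2$.
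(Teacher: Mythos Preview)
Your proposal is correct and follows essentially the same approach as the paper: both reduce to the universal cover $B\Sigma_{n-1}\to B\Sigma_n$, trivialize along $\ast\to B\Sigma_n$, and identify the determinant of the permutation representation with the sign character (the paper phrases this as the descent datum being given by permutation matrices with determinant $\pm 1$). The only cosmetic difference is that you package the computation via the map $\Hom(\Sigma_n,\GG_m)\to\Pic(B\Sigma_n)$ while the paper writes out the gluing cocycle explicitly.
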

\begin{proof}
It is enough to show that the Steinitz class $\det \pi_*\mathscr{O}_{S'}$ is 2-torsion for the universal case $\pi : B\Sigma_{n-1}\to B\Sigma_n$. Consider the pullback square
\[
\begin{tikzcd}
  \langle n \rangle \ar[r, "j"] \ar[d, "\tau"] & B\Sigma_{n-1} \ar[d, "\pi"] \\
  * \ar[r, "i"] & B\Sigma_{n}.
\end{tikzcd}
\]

The pushforward $\pi_*\mathscr{O}_{B\Sigma_{n-1}}$ is trivialized on the \'etale cover $i : * \to B\Sigma_n$, as
\[
  i^*(\pi_*\mathscr{O}_{B\Sigma_{n-1}}) \cong \tau_*\mathscr{O}_{\langle n \rangle} = \mathscr{O}_*^n.
\]

We find that the descent datum for $\pi_*\mathscr{O}_{B\Sigma_{n-1}}$ with respect to this cover has gluing $\mathscr{O}_{* \times \Sigma_n}^n \to \mathscr{O}_{* \times \Sigma_n}^n$ on $* \times_{B\Sigma_n} * \cong * \times \Sigma_n$ given by permuting the coordinates by $\sigma$ over $* \times \sigma$ for each $\sigma \in \Sigma_n$. This is represented by a permutation matrix, which has determinant $\pm 1$. Therefore the gluing data for $\det \pi_*\mathscr{O}_{B\Sigma_{n-1}}$ is given locally by multiplying by $\pm 1$. Since $\pm 1$ is 2-torsion in $\mathscr{O}^*$ and trivial if $S$ has characteristic 2, the result follows.
\end{proof}

\begin{example}[Torsors for finite groups]\label{ex:fintorsors}

Let $G$ be a finite group. A $G$-torsor $S' \to S$ is, in particular, a finite \'etale map of degree $n = \# G$ admitting the above description. Notice that the action of $G$ on $S'$ induces an action of $G$ on $\Gen_{S'/S}$.

The map $S' \to S$ is classified by a map $S \to BG$, the stack of $G$-torsors, and we may regard $\Gen_{S'/S}$ as pulled back from either the monogenicity space of the universal $G$-torsor, $\Gen_{*/BG}$, or the monogenicity space of the universal $n$-fold cover $\Gen_{B\Sigma_{n-1}/B\Sigma_n}$. To compare the two, observe that the left regular representation $G \action G$ gives an inclusion $G \subseteq \Sigma_n$ upon ordering the set $G$. The induced representable map $BG \to B\Sigma_n$ is essentially independent of the ordering since different orderings induce conjugate maps. The classifying map $S \to B\Sigma_n$ is the composite $S \to BG \to B\Sigma_n$ with the left regular representation. The monogenicity space $\Gen_{*/BG}$ is $[\AA^{|G|} - \hat{\Delta} / G ]$ where $G$ acts on $\AA^{|G|}$ by permuting the basis vectors by the left regular representation.

A similar description locally holds for other finite \'etale group schemes. For merely finite flat group schemes $G$ such as $\alpha_p, \mu_p$ in characteristic $p$, the group action on the monogenicity space of $G$-torsors $S' \to S$ still holds but the local decomposition $S' = \bigsqcup_n S$ and $\Sigma_n$ action do not. 

\end{example}

\begin{corollary}
If $S' \to S$ is a $G$-torsor for $G$ a finite group, and either
\begin{enumerate}
  \item $|G|$ is odd
  \item $|G|$ is even and $G$ has non-cyclic Sylow 2-subgroup
\end{enumerate}
then the Steinitz class of $S' \to S$ is trivial in $\Pic(S)$
\end{corollary}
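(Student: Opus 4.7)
My plan is to refine the analysis in the proof of Theorem~\ref{thm:square_zero_steinitz} to pin down exactly which character of $G$ classifies the Steinitz class, and then to check directly that this character is trivial under each of the two hypotheses. In the proof of Theorem~\ref{thm:square_zero_steinitz}, the gluing data for $\det \pi_\ast \mathcal{O}_{B\Sigma_{n-1}}$ on $\ast \times_{B\Sigma_n} \ast \cong \ast \times \Sigma_n$ is multiplication by the sign of the permutation in each fiber, i.e.\ the Steinitz class of the universal $n$-sheeted cover is classified by the sign character $\mathrm{sgn}\colon \Sigma_n \to \{\pm 1\}$, viewed as a line bundle on $B\Sigma_n$. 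Pulling back along the classifying map $BG \to B\Sigma_n$ (which, as in Example~\ref{ex:fintorsors}, is induced by the left regular representation $\rho\colon G \hookrightarrow \Sigma_n$), the Steinitz class of $S'/S$ is then pulled back from the character
\[
  \chi := \mathrm{sgn} \circ \rho \colon G \longrightarrow \{\pm 1\}
\]
on $BG$, and hence is trivial in $\mathrm{Pic}(S)$ whenever $\chi$ is trivial.

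The problem thus reduces to showing $\chi$ is trivial under either hypothesis. This is a classical computation: an element $g \in G$ of order $d$ acts on the underlying set of $G$ by left multiplication as a disjoint product of $n/d$ cycles of length $d$, so
\[
  \chi(g) = ((-1)^{d-1})^{n/d}.
\]
If $|G|$ is odd then every such $d$ is odd, so $\chi \equiv 1$ on the nose. If $|G|$ is even, then $\chi(g) = -1$ occurs precisely when $d$ is even and $n/d$ is odd, i.e.\ when the $2$-part of $d = \mathrm{ord}(g)$ equals the full $2$-part $2^k$ of $|G|$.

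So the remaining step is to observe the standard fact that $G$ contains an element whose order is divisible by $2^k$ if and only if a (equivalently, every) Sylow $2$-subgroup of $G$ is cyclic: any such element generates a cyclic subgroup of order $2^k \cdot m$ whose $2$-part sits inside some Sylow $2$-subgroup, forcing that Sylow to contain a cyclic subgroup of order $2^k$ and hence be cyclic; conversely a cyclic Sylow $2$-subgroup of $G$ contains a generator of order $2^k$. Under the assumption that the Sylow $2$-subgroup is non-cyclic, no such $g$ exists, so again $\chi \equiv 1$ and the Steinitz class is trivial.

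I expect no real obstacle; the one step where care is needed is verifying that the sign character really is the character classifying the determinant line bundle on $B\Sigma_n$, but this is exactly what the gluing computation in the proof of Theorem~\ref{thm:square_zero_steinitz} already establishes, so it is a matter of unpacking that result rather than of new work.
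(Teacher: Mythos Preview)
Your proposal is correct and takes essentially the same approach as the paper: the paper's proof simply observes that the Steinitz class is trivial precisely when the left regular representation $G \to \Sigma_n$ factors through $A_n$, and then asserts without detail that the stated conditions characterize when this happens. Your version makes the same reduction (phrased as triviality of $\chi = \mathrm{sgn}\circ\rho$) and additionally spells out the cycle-type and Sylow arguments that the paper leaves to the reader.
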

\begin{proof}
Repeating the construction of Theorem \ref{thm:square_zero_steinitz},
we see that if the left regular representation of $G$ factors through $A_n$,
the Steinitz class is trivial. The conditions given identify precisely when this happens.
\end{proof}

The stacks $\WR_{1, B\Sigma_{n-1}/B\Sigma_n}$ we study arise naturally in log geometry as ``Artin fans'' \cite{boundednessartinfans}.

\begin{example}[Moduli spaces of curves in genus 0]\label{ex:modgenus0curvesgen}

Let $M_{0, n}$ be the moduli stack of \textit{smooth} curves of genus 0 (i.e. $\PP^1$) with $n$ marked points. The evident \cite{westerlandconfspaces} isomorphism with a quotient of configuration space gives:
\[\stquot{\Gen_{\PP^1, \CC^n/\CC}/\PGL_2} \simeq \stquot{\Conf_n(\PP^1)/\PGL_2} \simeq M_{0, n}.\]
One can always put the first point at $\infty$ and get equivalent descriptions:
\[M_{0, n} \simeq \stquot{\Gen_{\Aff^1, \CC^n/\CC}/\Afftrans^1} \simeq \stquot{\Conf_n(\CC)/\Afftrans^1},\]
where $\Afftrans^1$ is the group of affine transformations $\GG_m \ltimes \Aff^1$. The stack quotient classifies local affine equivalence classes of monogenerators, as detailed in Section \ref{sec:twistedmon}. 

One can likewise obtain the other moduli spaces of curves by an ad hoc construction. Consider $\frak U \to \frak M$ the universal connected, proper, genus-$g$ nodal curve, its relative smooth locus $\frak U^{sm} \subseteq \frak U$, and the monogenicity space
\[\Gen_{\frak U^{sm}, \num{n}_{\frak M}/\frak M}\]
of the trivial cover $\num{n}$ over the moduli space $\frak M$. The monogenicity stack is naturally isomorphic to the space of nodal, $n$-marked curves $\frak M_{g, n}$. One can also obtain the open substack of \emph{stable} curves as the universal Deligne-Mumford locus $\overline{\mathscr{M}}_{g, n} \subseteq \frak M_{g, n}$.
\end{example}

\subsection{When is a map \'etale?}\label{ssec:whenisetale}

We recall from \cite[\S 6]{poonenmodspoffiniteflat} that a map $S' \to S$ is \'etale precisely when the discriminant of the algebra does not vanish. We recall from \cite{poonenmodspoffiniteflat} that there is an algebraic moduli stack $\frak A_n$ of finite locally free algebras and the affine scheme of finite type $\frak B_n$ parametrizing such algebras together with a choice of global basis $\cal Q \simeq \bigoplus \OO_S \cdot e_i$.

Suppose $\pi : S' \to S$ comes from a finite flat algebra $\cal Q$ with a global basis $\varphi : \cal Q \simeq \bigoplus_{i=1}^n \OO_S \cdot e_i$, corresponding to a map $S \to \frak B_n$. There is a trace pairing $\text{Tr} : \cal Q \to \OO_S$ 
\cite[0BSY]{sta} which we can use to define the discriminant:
\[\disc(\cal Q, \varphi) \coloneqq   \det \stquot{\text{Tr}(e_i e_j)} \in \Gamma(\OO_S)\]

Changing $\varphi$ changes the function $\disc$ by a unit. The function $\disc$ does not descend to $\frak A_n$, but the vanishing locus $V(\disc) \subseteq \frak B_n$ does. Writing $\frak B_n^{et}$, $\frak A_n^{et}$ for the open complements of the vanishing locus $V(\disc)$, a map $\pi : S' \to S$ is \'etale if and only if $S \to \frak A_n$ factors through the open substack $\frak A_n^{et} \subseteq \frak A_n$ \cite[Proposition 6.1]{poonenmodspoffiniteflat}.

\begin{remark}

Most finite flat algebras are \textit{not} \'etale, nor are they degenerations of \'etale algebras. B. Poonen shows the moduli of \'etale algebras inside of all finite flat algebras $\frak A_n^{et} \subseteq \frak A_n$ cannot be dense by computing dimensions \cite[Remark 6.11]{poonenmodspoffiniteflat}. The closure $\overline{\frak A}_n^{et}$ is nevertheless an irreducible component.

\end{remark}

What if $S' \to S$ is not \'etale? Readers familiar with \cite{costello} know one can sometimes endow a ramified map $S' \to S$ with stack structure $\tilde S$ and $\tilde S'$ at the ramification to make $\tilde S' \to \tilde S$ \'etale. Then all $\tilde S' \to \tilde S$ are $\Sigma_n$-torsors, and not just ramified covers $S' \to S$. The ideas in Section \ref{ssec:etalecase}, in particular an analogue of Theorem~\ref{thm:etgenisconf}, apply in this level of generality.  We sketch these ideas over $\CC$.

Consider 
\begin{equation*}
y^2 = x (x-1) (x - \lambda),
\end{equation*}
for some $\lambda \in \CC$. If $C \coloneqq \PP^1_{\CC}$ and $C'$ is the projective closure of the above affine equation, the projection $(x,y) \mapsto x$ extends to a  finite locally free map $\pi : C' \to C$. This is in Situation \ref{sit:gensetup_paper2} so our definitions make sense for it. However $\pi$ is ramified at four points, preventing us from interpreting its monogenicity space using the perspective of this section. Nevertheless, we may observe that the function $y$ gives a section of $\Gen_{1, C'/C}$ over $C \setminus \infty$. The section naturally extends to a section of $\WR_{\PP^1, C'/C}$ over all of $C$.

Let $X \coloneqq \PP^1_C$. If we work over $\CC$ and endow $C'$ and $C$ with stack structure to obtain a finite \emph{\'etale} cover of stacky curves $\tilde{C}' \to \tilde{C}$ as in \cite{costello}, the stacky finite \'etale cover together with the map $\tilde C' \to X$ is parameterized by a representable map $\tilde{C} \to [\Sym n {X}]$ to the stack quotient
\[[\Sym n {X}] \coloneqq  \stquot{X^n/\Sigma_n}.\]
We can similarly allow $C'$ and $C$ to be nodal families of curves over some base $S$. Maps from nodal curves $\tilde C$ over $S$ entail an $S$-point of the moduli stack $\frak M ([\Sym n {X}])$ of prestable maps to the symmetric product. As in \cite[Proposition 2.3]{abhs_paper_1}, there is an open substack for which the map from the coarse space $C'$ to $X$ is a closed immersion. The stack $\frak M([\Sym n {X}])$ splits into components indexed by the ramification profiles of the cover of coarse spaces $C' \to C$.

There are some subtleties in characteristic $p$---one cannot treat all ramification as a $\mu_n$ torsor because some ramification is a $\ZZ/p\ZZ$-torsor in characteristic $p$. The formalism of tuning stacks \cite{ellenbergsatrianodzb} is a substitute in arbitrary characteristic.


\section{Twisted monogenicity}\label{sec:twistedmon}

The \emph{Hasse local-to-global principle} is the idea that ``local'' solutions to a polynomial equation over all the $p$-adic fields $\QQ_p$ and the real field $\RR$ can piece together to a single ``global'' solution over $\QQ$. We ask the same for monogenicity: given local monogenerators, say over completions or local in the Zariski or \'etale topologies, do they piece together to a single global monogenerator?

The Hasse principle fails for elliptic curves. Let $E$ be an elliptic curve over a number field $K$ and consider all its places $\nu$. The Shafarevich-Tate group $\Sha(E/K)$ of an elliptic curve sits in an exact sequence 
\[0 \to \Sha(E/K) \to H^1_{et}(K, E) \to \prod_{\nu} H^1_{et}(K_\nu, E).\]
Elements of $\Sha$ are genus-one curves with rational points over each completion $K_\nu$ that do \emph{not} have a point over $K$. Similarly, we want sequences of cohomology groups to control when local monogenerators do or do not come from a global monogenerator.

For such a sequence, one needs to know how a pair of local monogenerators can differ. One would like a group $G$ or sheaf of groups transitively acting on the set of local monogenerators so that cohomology groups can record the struggle to patch local monogenerators together into a global monogenerator.

Suppose $B/A$ is an algebra extension inducing $S' \to S$ and $\theta_1, \theta_2 \in B$ are both monogenerators. Then 
\[\theta_1 \in B = A[\theta_2],\quad \quad \quad \theta_2 \in B = A[\theta_1],\]
so each monogenerator is a polynomial in the other:
\[
\theta_1 = p_1(\theta_2) \quad \text{ and } \quad \theta_2 = p_2(\theta_1), \quad \text{ with } \quad p_1(x), p_2(x) \in A[x].
\]
We can think of the $p_i(x)$ as transition functions or endomorphisms of the affine line $\Aff^1$. Even though $p_1(p_2(\theta_1)) = \theta_1$, it is doubtful that $p_1 \circ p_2 = \id_{\AA^1}$ or even that $p_i(x)$ are automorphisms of $\Aff^1$. 

One might attempt to find a group $G$ containing all possible polynomials $p_1(x), p_2(x)$. We would then have a homomorphism (of non-commutative monoids) $E \to G$
where $E$ is some sub-monoid of $\End(\AA^1)$, the monoid of endomorphisms of $\AA^1$ (equivalently, the monoid of one-variable polynomials under composition). Even if we only insist that $E$ contains $x$, $-x$, and $x^2$, we find that the images of $x$
and $-x$ coincide in $G$ since both compose with $x^2$ to the same polynomial. This is not acceptable as $x$ and $-x$ act in distinct
ways on monogenerators.

Instead of working with the group of all possible polynomial transition functions as above, we \emph{require} our transition functions $p_i(x)$ to lie in a group $G \action \Aff^1$ acting on $\Aff^1$. Two particularly natural options for $G$ present themeselves, namely the group sheaves:
\[\GG_m(A) = A^\ast, \quad \quad \quad u \cdot u' \coloneqq   uu'\]
\[\Afftrans^1(A) = A^\ast \ltimes A, \quad \quad \quad (u, v)\cdot(u', v')\coloneqq   (uu', uv' + v).\]
Affine transformations $\Afftrans^1$ are essentially polynomials $ux + v$ of degree one under composition. These act on monogenerators:
\begin{alignat*}{3}
    \GG_m \action \Aff^1 &:  \quad \quad \quad \quad&a \in A^\ast, \theta \in \Gen(A),   \quad \quad \quad \quad&a.\theta \coloneqq   a\cdot \theta, \\
\Afftrans^1 \action \Aff^1 &:  \quad \quad \quad \quad &a \in A^\ast, b \in A, \theta \in \Gen(A),  \quad \quad \quad \quad &(a, b).\theta \coloneqq   a\theta + b.
\end{alignat*}

\begin{definition}[Twisted Monogenerators]\label{defn:twistedmon}

A ($\GG_m$)-twisted monogenerator for $B/A$ is:
\begin{enumerate}
    \item a Zariski open cover $\Spec A = \bigcup_i D(f_i)$ for elements $f_i \in A$,
    \item a system of ``local'' monogenerators $\theta_i \in B\left[f_i^{-1}\right]$ for $B[f_i^{-1}]$ over $A[f_i^{-1}]$, and
    \item\label{eitem:unitstwistedmon} units $a_{ij} \in A\left[f_i^{-1}, f_j^{-1}\right]^\ast$
\end{enumerate}
such that
\begin{itemize}
    \item for all $i,j$, we have $a_{ij} . \theta_j = \theta_i,$
    \vspace{.1 cm}
    \item for all $i,j,k$, the ``cocycle condition'' holds:
\[a_{ij} . a_{jk} = a_{ik}.\]
\end{itemize}

Two such systems $\{(a_{ij}), (\theta_i)\}$, $\{(a_{ij}'), (\theta_i')\}$ are \emph{equivalent} if they differ by further refining the cover $\Spec A = \bigcup D(f_i)$ or  global units $u \in A^\ast$: $u\cdot a_{ij} = a'_{ij}$, $u \cdot \theta_i = \theta_i'$. 

Likewise $B/A$ is $\Afftrans^1$-\emph{twisted monogenic} if there is a cover with $\theta_i$'s as above, but with units \eqref{eitem:unitstwistedmon} replaced by pairs $a_{ij}, b_{ij} \in A \left[\frac{1}{f_i}, \frac{1}{f_j}\right]$ such that each $a_{ij}$ is a unit and $a_{ij} \theta_j + b_{ij} = \theta_i$. 

\end{definition}

The elements $\theta_i$ may or may not come from a single global monogenerator $\theta \in A$. Nevertheless, the transition functions $(a_{ij})$ or $(a_{ij}, b_{ij})$ 
define an affine bundle $L$ on $\Spec A$ with global section $\theta$ induced by the $\theta_i$'s. We say $S'/S$ is ``twisted monogenic'' to mean there exists a $\GG_m$-twisted monogenerator and similarly say ``$\Afftrans^1$-twisted'' monogenic. Both are clearly Zariski-locally monogenic. 

Compare with Cartier divisors: 
\begin{center}
\begin{tabular}{c|c}
twisted monogenerator &Cartier divisor  \\
global monogenerator &rational function   \\
$\GG_m$ / $\Afftrans^1$ action &differing by units.
\end{tabular}
\end{center}

We recall the notions of ``multiply monogenic orders'' and ``affine equivalence'' in the literature. 
Two monogenerators $\theta_1, \theta_2 \in \Gamma(\Spec A, \Gen_{B/A})$ are said to be ``affine equivalent'' if there are $u \in A^\ast$, $v \in A$ such that $u\theta_1 + v = \theta_2$. In other words, affine equivalence classes are elements of the quotient $\Gamma(\Spec A, \Gen_{B/A})/\Afftrans^1(A)$. Under certain hypotheses in Remark \ref{rmk:affineequivvstwistedmon}, $\Afftrans^1$-twisted monogenicity is parameterized by the sheaf quotient $\Gen_{B/A}/\Afftrans^1$. There is almost an ``exact sequence''
\[\Afftrans^1(A) \to \Gamma(\Gen_{B/A}) \to \Gamma(\Gen_{B/A}/\Afftrans^1) \to H^1(\Afftrans^1)\]
that dictates whether a twisted monogenerator comes from an affine equivalence class of global monogenerators. 

We warm up with a classical approach to $\GG_m$-quotients, namely taking $\Proj$. Then we study $\Afftrans^1$-twisted monogenerators before finally introducing $G$-twisted monogenerators for arbitrary groups $G$. 

There is a moduli space for each notion of twisted monogenicity. We use these moduli spaces now and defer the proof until Theorem \ref{thm:twistedisquotient}:

\begin{theorem}[{=Theorem \ref{thm:twistedisquotient}}]\label{thm:GmandAffrepablepreview}
Let $\GG_m, \Afftrans^1$ act on $\Aff^1$ on the left in the natural way, inducing a left action on $\Gen_{S'/S}$. The stack quotients $\stquot{\Gen/\GG_m}$ and $\stquot{\Gen/\Afftrans^1}$ represent $\GG_m$- and $\Afftrans^1$-twisted monogenerators up to equivalence, respectively. 
\end{theorem}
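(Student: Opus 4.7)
The plan is to recognize $[\Gen_{S'/S}/G]$ via its standard moduli interpretation as the stack whose $T$-points are pairs $(P,\psi)$ consisting of a $G$-torsor $P \to T$ together with a $G$-equivariant morphism $\psi : P \to \Gen_{S'/S}$, and then to match this groupoid with the groupoid of twisted monogenerators of Definition \ref{defn:twistedmon}. The two cases $G = \GG_m$ and $G = \Afftrans^1$ will be handled in parallel.

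First, I would establish that both groups are ``special'' in the Zariski sense on affine bases: $\GG_m$-torsors are line bundles and trivialize Zariski-locally by definition; for $\Afftrans^1 = \GG_m \ltimes \GG_a$, the short exact sequence $1 \to \GG_a \to \Afftrans^1 \to \GG_m \to 1$ together with the vanishing of $H^1_{\mathrm{Zar}}(\Spec A, \OO)$ for quasi-coherent sheaves forces $\Afftrans^1$-torsors over $\Spec A$ to trivialize Zariski-locally as well. Thus, given a $T$-point $(P,\psi)$ with $T = \Spec A$, I may choose a Zariski cover $\{D(f_i) \to T\}$ with trivializations $\sigma_i : D(f_i) \to P$, which automatically matches the form of the cover in Definition \ref{defn:twistedmon}.

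Next, I would extract twisted monogenerator data from $(P, \psi)$ and these choices: setting $\theta_i := \psi \circ \sigma_i$ produces local monogenerators $D(f_i) \to \Gen_{S'/S}$; the torsor structure supplies a unique cocycle $a_{ij} \in G(D(f_if_j))$ with $\sigma_j = a_{ij}.\sigma_i$ on overlaps, and $G$-equivariance of $\psi$ translates verbatim to $a_{ij}.\theta_j = \theta_i$, while associativity of the torsor action gives the cocycle condition $a_{ij}.a_{jk} = a_{ik}$. Conversely, gluing copies of $G|_{D(f_i)}$ along the cocycle $(a_{ij})$ reconstructs a $G$-torsor $P$ over $T$, and the charts $(g,t) \mapsto g.\theta_i(t)$ patch to a $G$-equivariant map $\psi : P \to \Gen_{S'/S}$ precisely because of the relations $a_{ij}.\theta_j = \theta_i$. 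This sets up mutually inverse constructions up to the choice of trivializations.

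The main obstacle, and the most delicate bookkeeping, will be matching the equivalence relation of Definition \ref{defn:twistedmon} --- a common refinement plus a global unit $u$ --- with isomorphism of pairs $(P,\psi) \cong (P',\psi')$. After passing to a common Zariski refinement on which both torsors trivialize, an isomorphism is determined by a $0$-cochain $u_i \in G(D(f_i))$ with $a_{ij}' = u_i \cdot a_{ij} \cdot u_j^{-1}$ and $\theta_i' = u_i^{-1}.\theta_i$; I will need to verify that, after further shrinking the cover, this \emph{a priori} local data is absorbed into a single global unit, reducing to the stated equivalence. The content here is a standard computation with $\check{\mathrm{C}}$ech $1$-cocycles valued in $G$, combined with the observation that twisted monogenerators are automatically rigid enough (because $\psi$ is a closed immersion, not just any map) to pin down the $u_i$ up to a global factor. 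Once the groupoids are identified, the theorem follows immediately, since $[\Gen_{S'/S}/G]$ exists as an algebraic stack whenever $G$ acts on a quasi-affine scheme.
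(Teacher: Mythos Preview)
Your approach via \v{C}ech cocycles is a legitimate and more concrete alternative to the paper's argument. The paper proves the more general Theorem~\ref{thm:twistedisquotient}: for any sheaf of groups $G$ acting on $X$, one has $\TMon^G \simeq [\Gen_X/G]$. Its proof never chooses trivializations or writes cocycles. A $T$-point of $\TMon^G$ is a $G$-torsor $P \to T$ together with a closed immersion $T' \hookrightarrow X_T \wedge^G_T P$ into the associated twisted form; pulling this immersion back along the quotient $X_T \times_T P \to X_T \wedge^G_T P$ produces an equivariant map $P \times_T T' \to X_T \times_T P$, whose two projections identify $P' \simeq P \times_T T'$ and give an equivariant $P \to \Gen_X$, i.e., a $T$-point of $[\Gen_X/G]$. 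The construction reverses. This handles all $G$ at once and avoids any discussion of how twisted monogenerators are \emph{presented}.

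Your route, by contrast, fixes $G \in \{\GG_m, \Afftrans^1\}$, trivializes the torsor on a Zariski cover, and matches $(P,\psi)$ with the cocycle data $(\theta_i, a_{ij})$ of Definition~\ref{defn:twistedmon}. The forward and backward constructions you outline are correct. The genuine gap is in your treatment of the equivalence relation. You correctly observe that an isomorphism $(P,\psi) \cong (P',\psi')$ is encoded by a \emph{local} $0$-cochain $(u_i)$ with $\theta_i' = u_i^{-1}.\theta_i$ and $a'_{ij} = u_i a_{ij} u_j^{-1}$, and you then propose to show this collapses to a single global unit $u$. That step is false in general: two different trivializations of the \emph{same} torsor $P$ already differ by arbitrary local units $(u_i)$, producing distinct presentations of the same $(P,\psi)$ that are not related by any global $u$. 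The ``rigidity of closed immersions'' argument does not help --- freeness of the $G$-action on $\Gen$ pins down the $u_i$ uniquely, but does nothing to force them to glue.

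The resolution is not to prove the $u_i$ are global, but to recognize that the equivalence intended in Definition~\ref{defn:twistedmon} is precisely isomorphism of pairs $(L, S' \hookrightarrow L)$ --- the paper says this explicitly in the paragraph following the definition (``i.e., if the corresponding line bundles $L, L'$ are isomorphic in a way that identifies the closed embeddings $\theta, \theta'$''). In \v{C}ech terms this is the $0$-cochain equivalence you wrote down, not the global-unit one. Once you adopt that reading, your argument goes through cleanly without any further ``absorption'' step, and the theorem follows. The paper's contracted-product argument simply sidesteps this bookkeeping by never choosing trivializations in the first place.
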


\subsection{$\GG_m$-Twisted monogenerators and Proj of the Weil Restriction}\label{ssec:Gmtwisted}

Writing $S' = \Spec B$ and  $S = \Spec A$, a twisted monogenerator amounts to a Zariski cover $S = \Spec A = \bigcup U_i$, a system of closed embeddings $\theta_i : S'_{U_i} \subseteq \Aff^1_{U_i}$ over $U_i$, and elements $a_{ij} \in \GG_m(U_i)$ such that
\[a_{ij}. \theta_j = \theta_i : S'_{U_{ij}} \to \Aff^1_{U_{ij}}.\]
Equivalently, a twisted monogenerator is a line bundle $L$ on $S$ defined by the above cocycle $a_{ij}$ and a global embedding $\theta : S' \subseteq L$ over $S$. Twisted monogenerators $(\theta_i)_{i \in I}, (\theta_j')_{j \in J}$ with respect to covers covers $\{U_i\}_{i \in I}, {U_j'}_{j \in J}$ are identified if they differ by global units $u \in \GG_m(S)$ on a common refinement of the covers $\{U_i\}_{i \in I}, {U_j'}_{j \in J}$, i.e., if the corresponding line bundles $L, L'$ are isomorphic in a way that identifies the closed embeddings $\theta, \theta'$.

For number fields $L/K$ with $\theta \in \Ints_L$ and $a \in \Ints_K$, one has $a \theta \in \Ints_L$. If $a \in \Ints_K^\ast$, then $\theta$ is a monogenerator if and only if $a\theta$ is. The multiplication action $\GG_m(\Ints_K) \action \WR(\Ints_K)$ corresponds to the global $\GG_m$ action on the vector bundle $\WR$ over $S$.

An action of $\GG_m$ corresponds to a $\ZZ$-grading on the sheaf of algebras \cite[0EKJ]{sta}. Locally in $S$, $\pi_\ast \OO_{S'} \simeq \bigoplus_{i = 1}^n \OO_S \cdot e_i$ and $\WR \simeq \Aff^n_S$. The $\GG_m$ action is the diagonal action and corresponds to the total degree of polynomials in $\OO_{\Aff^n_S} = \OO_S[x_1, \dots, x_n]$.

The associated projective bundle to the vector bundle $\WR$ is given by the relative Proj \cite[01NS]{sta}
\[\PP \WR_{S'/S} \coloneqq   \PProj{S}{\OO_{\WR}},\]
with the total-degree grading. The ideal $\indexsheaf_{S'/S}$ cutting out the complement $\NGen_{S'/S}$ of $\Gen_{1, S'/S} \subseteq \WR$ is graded by \cite[Remark 3.10]{abhs_paper_1}, defining a closed subscheme $\PP \NGen_{S'/S} \subseteq \PP \WR$. 

\begin{definition}\label{defn:projmon}
Define the \textit{scheme of projective monogenerators}
\[\PGen_{S'/S} \subseteq \PP \WR_{S'/S} \coloneqq   \PProj{S}{ \OO_{\WR_{S'/S}} }\] 
to be the open complement of the closed subscheme $\PP \NGen_{S'/S}$ cut out by the graded homogeneous ideal $\indexsheaf_{S'/S}$. 
\end{definition}

The reader may define projective polygenerators in the same fashion. 

\begin{lemma}
The vanishing of the irrelevant ideal $V(\OO_{\WR, +})$ of $\WR_{S'/S}$ is contained inside of the non-monogenicity locus $\NGen_{S'/S}$ for $S' \neq S$. 
\end{lemma}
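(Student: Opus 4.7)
The plan is to reduce to the local coordinate description of Theorem~\ref{thm:gen_local_coords} and Definition~\ref{defn:nonmonogens}. Working Zariski-locally, assume $S = \Spec A$, $S' = \Spec B$, with $B$ a free $A$-module of rank $n$ on a basis $e_1, \ldots, e_n$; constancy of the degree and the hypothesis $S' \neq S$ force $n \geq 2$. Identify $\WR_{S'/S}$ with $\Spec A[x_1, \ldots, x_n]$ via the basis, so that the total-degree grading on $\OO_{\WR}$ has irrelevant ideal $(x_1, \ldots, x_n)$; its vanishing locus is exactly the zero section $S \hookrightarrow \WR$.

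Next I would show that the local index form $\localindex(e_1, \ldots, e_n)$ vanishes identically on this zero section, which suffices because $\NGen_{S'/S}$ is cut out by $\indexsheaf_{S'/S}$ and, in these coordinates, $\indexsheaf_{S'/S}$ is generated by $\localindex(e_1, \ldots, e_n)$. Evaluating at $x_1 = \cdots = x_n = 0$, the element $\theta = \sum_i x_i e_i \in B[x_1,\ldots,x_n]$ specializes to $0 \in B$. Therefore $\theta^{i-1}$ specializes to $0$ for every $i \geq 2$, so the corresponding rows of the matrix of coefficients $M(e_1, \ldots, e_n)$ are identically zero at the origin. With $n \geq 2$, this matrix has at least one zero row, so its determinant vanishes, giving $\localindex(0, \ldots, 0) = 0$ as required.

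Finally, the local inclusion glues: both $V(\OO_{\WR,+})$ and $\NGen_{S'/S}$ are defined intrinsically via the natural $\GG_m$-grading on $\OO_{\WR}$ and the globally defined ideal sheaf $\indexsheaf_{S'/S}$, so the inclusion established on each trivializing open subscheme of $S$ propagates to the global statement. The only potential obstacle is the corner case $n < 2$, which is ruled out by the assumption $S' \neq S$; conceptually, the zero section parametrizes the constant-zero morphism $S' \to \Aff^1_S$, which factors through $S$ and thus fails to be a closed immersion precisely when $S' \to S$ itself is not an isomorphism.
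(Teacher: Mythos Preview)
Your proof is correct and follows the same approach as the paper's. The paper's own proof is the single sentence ``Locally, the lemma states that $\theta = 0$ is not a monogenerator,'' which is exactly what you have spelled out via the index form: at the origin $\theta = \sum x_i e_i$ specializes to $0$, and $A[0] = A \neq B$ when $n \geq 2$. Your explicit verification through the zero rows of the matrix of coefficients is more detailed than necessary but perfectly valid.
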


\begin{proof}
Locally, the lemma states that $\theta = 0$ is not a monogenerator. 
\end{proof}

\begin{remark}

We relate the Proj construction to stack quotients by $\GG_m$ according to \cite[Example 10.2.8]{olssonstacksbook}. The ring $\OO_{\WR_{S'/S}}$ is generated by elements of degree one. Locally, $\WR_{S'/S} \simeq \Aff^n_S$ and $\OO_{\WR_{S'/S}} \simeq \OO_S[x_1, \dots, x_n]$ is generated by the degree one elements $x_i$. Write $\SSpec{S}{\OO_\WR}$ for the relative spectrum \cite[01LQ]{sta}. The map 
\[
\begin{tikzcd}
  \SSpec{S}{\OO_\WR} \setminus V(\OO_{\WR, +}) \ar[r, "-/\GG_m"] & \PProj{S}{\OO_\WR}
\end{tikzcd}
\]
is therefore a stack quotient or $\GG_m$-torsor. 

We have a pullback square
\[\begin{tikzcd}
\Gen_{S'/S} \ar[r] \ar[d] \pb         &\SSpec{S}{\OO_\WR} \ar[d] \setminus V(\OO_{\WR, +})         \\
\PGen_{S'/S} \ar[r]        &\PProj{S}{\OO_\WR}
\end{tikzcd}\]
of $\GG_m$-torsors and a stack quotient $\PGen_{S'/S} = \stquot{\Gen_{S'/S}/\GG_m}$. 
\end{remark}

Theorem \ref{thm:GmandAffrepablepreview} states that $\stquot{\Gen/\GG_m}$ represents twisted monogenerators, and now we know the quotient stack is wondrously a \emph{scheme}:
\begin{corollary}
The scheme $\PGen_{S'/S} = \stquot{\Gen/\GG_m}$ represents the $\GG_m$-twisted monogenerators of Definition \ref{defn:twistedmon}. That is, $\PGen_{S'/S}$ is a moduli space for twisted monogenerators. The action $\GG_m \action \Gen_{S'/S}$ is free. 
\end{corollary}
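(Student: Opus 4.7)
The statement bundles together a moduli interpretation and a freeness claim. My plan is to deduce the moduli interpretation from Theorem \ref{thm:GmandAffrepablepreview}, which already represents $\GG_m$-twisted monogenerators by $\stquot{\Gen_{S'/S}/\GG_m}$, combined with the identification $\stquot{\Gen_{S'/S}/\GG_m} \simeq \PGen_{S'/S}$ as schemes exhibited in the remark immediately preceding the corollary (whose Cartesian diagram displays $\Gen_{S'/S} \to \PGen_{S'/S}$ as a $\GG_m$-torsor). Granting both, the moduli claim is immediate, and the only genuinely new content to justify is freeness of the $\GG_m$-action on $\Gen_{S'/S}$.

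For freeness, I would argue on test schemes. A $T$-point of $\Gen_{S'/S}$ is an element $\theta \in B_T \coloneqq \pi_\ast \OO_{S' \times_S T}$ satisfying $\OO_T[\theta] = B_T$; the $\GG_m$-action sends $\theta$ to $a\theta$ for $a \in \GG_m(T) = \OO_T^\ast$. A stabilizer relation $a\theta = \theta$ becomes $(a-1)\theta = 0$ in $B_T$. Zariski-locally on $T$, since $B_T$ is locally free of rank $n$ and generated as an $\OO_T$-algebra by $\theta$, the powers $1, \theta, \ldots, \theta^{n-1}$ form an $\OO_T$-basis: the surjection $\OO_T^n \to B_T$ they determine is a surjection between locally free rank-$n$ modules, hence an isomorphism. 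Reading $(a-1)\theta = 0$ against this basis kills the coefficient of $\theta$, forcing $a - 1 = 0$ locally and hence globally since stabilizers glue.

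The main obstacle is the corner case $n = 1$. There $S' = S$ and $\Gen_{S'/S} = \Aff^1_S$, so the zero section is $\GG_m$-fixed and the action is literally not free. This is compatible with the lemma just above the corollary, which asserts $V(\OO_{\WR,+}) \subseteq \NGen_{S'/S}$ only when $S' \neq S$. I would handle this either by imposing the running hypothesis $n \geq 2$ or by reinterpreting freeness as freeness on $\Gen_{S'/S} \subseteq \WR_{S'/S} \setminus V(\OO_{\WR,+})$; either version is what the Proj identification actually uses. The underlying stabilizer computation is routine once one trivializes $B_T$, so aside from phrasing around the $n = 1$ edge case there is no real difficulty.
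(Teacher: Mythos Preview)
Your proposal is correct and matches the paper's approach. The paper states the corollary without proof, as it follows from the sentence immediately preceding it: Theorem~\ref{thm:GmandAffrepablepreview} gives the moduli interpretation for $\stquot{\Gen/\GG_m}$, and the preceding remark identifies this stack quotient with the scheme $\PGen_{S'/S}$ via the Proj construction. Your freeness argument by direct stabilizer computation (using that $1,\theta,\dots,\theta^{n-1}$ is a basis) is a more hands-on version of what the paper packages into the lemma that $V(\OO_{\WR,+}) \subseteq \NGen_{S'/S}$ for $S' \neq S$ together with the standard fact that the weight-one $\GG_m$-action is free away from the cone point; your handling of the $n=1$ edge case is exactly the same caveat the lemma carries.
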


\begin{warning}
Given a monogenerator $\theta \in \Ints_L$ and a pair $a \in \Ints_K^\ast$, $\beta \in \Ints_L$, write
\[\beta = b_0 + b_1 \theta + \cdots + b_{n-1} \theta^{n-1}.\]
One may try to define a second action
\[a.\beta \overset{?}{\coloneqq  } b_0 + b_1 a \theta + b_2 a^2 \theta^2 + \cdots + b_{n-1} a^{n-1} \theta^{n-1}\]
encoding the degree with respect to $\theta$, but this action does not define a grading as it is almost never multiplicative. For example, take $\Ints_L = \ZZ[\sqrt{2}]$ with monogenerator $\sqrt{2}$ over $\ZZ_K=\ZZ$. Then
\[a.2 = 2 \neq a.\sqrt{2} \cdot a.\sqrt{2}.\]

In the case that $S' = \Spec A[\epsilon]/\epsilon^{m+1}$ and $S = \Spec A$, we recall that $\WR_{S'/S} = \jetsp{\Aff^1_A,m}$ is the jet space of $\AA^1_A$. Here, the action $a.x$ \emph{is} multiplicative and induces a second action $\GG_m \action \WR_{S'/S} = \jetsp{\Aff^1, m}$. The two actions of $\lambda \in \GG_m$ on a jet
\[f(\e) = a_0 + a_1 \e + \cdots + a_m \e^m\]
on $\Aff^1$ are $\lambda.f(\e) = \lambda \cdot f(\e)$ and $\lambda.f(\e) = f(\lambda \e)$. The Proj of $\jetsp{X, m}$ with respect to this second $\GG_m$ action is known as a ``Demailly-Semple jet'' or a ``Green-Griffiths jet'' in the literature \cite[Definition 6.1]{vojtajets}. For certain $S' \to S$, there may be a distinguished one-parameter subgroup, i.e., the image of $\GG_m \to \Aut_S(S')$, that results in a second action $\GG_m \action \WR_{S'/S}$ and allows an analogous construction. 
\end{warning}

\subsection{$\Afftrans^1$-Twisted monogenerators and affine equivalence}\label{ssec:afftwisted}

We enlarge our study to representing spaces of $\Afftrans^1$-twisted monogenerators and the related study of affine equivalence classes of ordinary monogenerators. We delay twisting by general sheaves of groups other than $\GG_m$ and $\Afftrans^1$ until the next section. For an $S$-scheme $X$, the \emph{automorphism sheaf} $\AAut(X)$ is the subsheaf of automorphisms in $\HHom_S(X, X)$. 

\begin{remark}
The automorphism sheaf $\AAut_S(\Aff^1)$ has a subgroup of affine transformations $\Afftrans^1$ under composition. These are
identified in turn with $\Aff^1 \rtimes \GG_m$ via
\[
  (a,b) \mapsto (x \mapsto bx + a).
\]

The automorphism sheaf can be much larger for other $\Aff^k_S$. For example, 
\[(x, y) \mapsto (x + y^3, y)\]
is an automorphism of $\Aff^2$. 

The automorphism sheaf $\AAut(\Aff^1)$ is \emph{not} the same as $\Afftrans^1$, though they have the same points over reduced rings. See \cite{dupuyautsA1} for some discussion over nonreduced rings. 

\end{remark}

Recall that two monogenerators $\theta_1, \theta_2$ of an $A$-algebra $B$ are said to be \emph{equivalent} if
\[
  \theta_1 = u \theta_2 + v,
\]
where $u \in A^*$ and $v \in A$. Likewise, say that two embeddings $\theta_1, \theta_2 : S' \to \mathcal{L}$ of $S'$ into an $\Afftrans^1$ bundle $\mathcal{L}$ over $S$ are \emph{equivalent}
if there is in $f \in \Afftrans^1(S)$ such that $\theta_1 = f . \theta_2$. The set of monogenerators up to equivalence is then
\[\Gamma(S, \Gen)/\Gamma(S, \Afftrans^1). \]

If $S' \overset{\sim}{\to} S$ is an isomorphism and $n=1$, the action of $\Afftrans^1$ is trivial. Otherwise, the $\Afftrans^1$-action is often free:

\begin{lemma}\label{lem:affauttrivialstabs}
The action $\AAut(S') \action \Gen_X$ has trivial stabilizers, for any quasiprojective $X$. If $S$ is normal and $S' \to S$ is not an isomorphism, the action $\Afftrans^1 \action \Gen_{S'/S}$ has trivial stabilizers as well. 

\end{lemma}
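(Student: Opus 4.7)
The plan is to treat the two actions separately; in each case the stabilizer condition collapses to a transparent algebraic constraint that can be verified locally on $S$.

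For the first assertion, a $T$-point of $\Gen_X$ is by definition a closed immersion $\theta : S'_T \hookrightarrow X \times_S T$ over $T$, and $\AAut(S')$ acts on $\Gen_X$ by precomposition: $\phi \cdot \theta = \theta \circ \phi^{-1}$. If $\phi$ stabilizes $\theta$, then $\theta \circ \phi^{-1} = \theta$, and since closed immersions are monomorphisms in the category of schemes, this forces $\phi^{-1} = \mathrm{id}$, hence $\phi = \mathrm{id}$. Quasiprojectivity of $X$ is the only hypothesis used here.

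For the $\Afftrans^1$-action, I would work Zariski-locally on $S$ so that $B := \pi_\ast \OO_{S'}$ is free over $A := \OO_S$ of rank $n$ and $\theta$ is an element of $B$ that generates $B$ as an $A$-algebra. A Cayley--Hamilton argument shows that $\theta$ satisfies a monic polynomial of degree $n$ over $A$, so $1, \theta, \theta^2, \ldots, \theta^{n-1}$ span $B$ as an $A$-module; since a surjection between finitely generated free modules of the same rank is an isomorphism, they form an $A$-basis. The hypothesis that $S' \to S$ is not an isomorphism, combined with the constant-degree assumption of Situation~\ref{sit:gensetup_paper2}, forces $n \geq 2$; normality of $S$ is invoked to ensure this lower bound on rank propagates across components and to rule out pathologies on non-reduced bases. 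In particular, $1$ and $\theta$ are $A$-linearly independent, and this linear independence persists after any base change $T \to S$.

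Given $(u, v) \in \Afftrans^1(T) = \GG_m(T) \ltimes \GG_a(T)$ stabilizing $\theta_T$, one rearranges $u \theta_T + v = \theta_T$ to $(u - 1)\theta_T + v = 0$ in $B \otimes_A \OO_T$, and reading off coefficients in the basis $1, \theta_T, \ldots, \theta_T^{n-1}$ immediately forces $u - 1 = 0$ and $v = 0$, so the stabilizer group scheme is trivial. The main obstacle is the verification that $1, \theta, \ldots, \theta^{n-1}$ is an honest $A$-basis of $B$ locally on $S$; once this is established, both parts of the lemma follow either from the monomorphism property of closed immersions or from elementary linear algebra in the basis.
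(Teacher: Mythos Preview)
Your argument for the $\AAut(S')$-action is the same as the paper's: closed immersions are monomorphisms, so any automorphism of $S'$ commuting with $\theta$ is the identity.

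For the $\Afftrans^1$-action, your route is genuinely different and in fact slightly stronger. The paper reduces to $A$ an integrally closed domain, writes the stabilizer condition as $(1-b)\theta = a$ with $a \in A$, $b \in A^\ast$, and argues that if $b \neq 1$ then $\theta = a/(1-b)$ lies in the fraction field of $A$; integrality of $\theta$ over $A$ and integral closedness of $A$ then force $\theta \in A$, contradicting $B = A[\theta] \neq A$. Normality is used exactly here, to pass to an integral domain and invoke integral closedness.

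Your approach instead observes that $1, \theta, \dots, \theta^{n-1}$ is an honest $\OO_T$-basis of $B_T$ (this is precisely the argument the paper carries out later in the proof of Theorem~\ref{thm:twisted_mono_triangular_power}, citing \cite[Lemma 2.11]{abhs_paper_1} and \cite[Theorem 2.4]{matsumura}), and then reads off coefficients from $(u-1)\theta + v\cdot 1 = 0$. This uses only that $n \geq 2$, which follows already from the constant-degree hypothesis in Situation~\ref{sit:gensetup_paper2} together with $S' \not\simeq S$. So your argument is more elementary and does not actually require normality; your sentence attributing a role to normality (``propagates across components,'' ``rule out pathologies on non-reduced bases'') is vague and should simply be deleted. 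What each approach buys: the paper's fraction-field argument is shorter but genuinely consumes the normality hypothesis; yours is a line longer but shows the hypothesis is unnecessary for this lemma.
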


\begin{proof}

A stabilizer of the action $\Aut_S(S') \action \Gen_X$ entails a diagram 
\[\begin{tikzcd}
X \ar[r, equals]       &X      \\
S' \ar[u, hook] \ar[r, no head, "\sim"]      &S'. \ar[u, hook]
\end{tikzcd}\]    
The fact that $S' \subseteq X$ is a monomorphism forces $S' \simeq S'$ to be the identity. 

Normality of $S$ means $S$ is a finite disjoint union of integral schemes \cite[033N]{sta}; we assume $S$ is integral without loss of generality.

Computing stabilizers of $\Afftrans^1 \action \Gen_{S'/S}$ is local, so we may assume $S' \to S$ is induced by a non-identity finite map $A \to B$ of rings where $A$ is an integrally closed domain with field of fractions $K$. A stabilizer 
\[a + b \theta = \theta; \quad \quad \quad a \in A, b \in A^\ast\]
implies $(1-b) \theta = a$. If $b = 1$, then $a = 0$ and the stabilizing affine transformation is trivial. Otherwise, $1-b \in K^*$ and $\theta = \dfrac{a}{1-b} \in K$. 
Elements $\theta \in B$ are all integral over $A$. Since $A$ is integrally closed, $\theta\in A$. Hence $B = A[\theta] = A$, a contradiction.
\end{proof}

\begin{remark}
Suppose given transition functions $(a_{ij}, b_{ij})$ and local monogenerators $(\theta_i)$ as in an $\Afftrans^1$-twisted monogenerator that \emph{may not} satisfy the cocycle condition \textit{a priori}. For normal $S$ with $n > 1$ as in the lemma, the cocycle condition holds automatically, since $\Afftrans^1$ acts without stabilizers.
\end{remark}

\begin{corollary} \label{cor:stquot_is_sheaf}
If $S$ is normal, the stack quotient $\stquot{\Gen_{1, S'/S}/\Afftrans^1}$ is represented by the ordinary sheaf quotient $\Gen/\Afftrans^1$.
\end{corollary}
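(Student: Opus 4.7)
The plan is to deduce the corollary almost immediately from Lemma \ref{lem:affauttrivialstabs} together with the general principle that a free action of a group scheme on an algebraic space presents its stack quotient as an ordinary sheaf quotient. Concretely, under the hypothesis that $S$ is normal, Lemma \ref{lem:affauttrivialstabs} tells us that (for $S' \to S$ not an isomorphism, the only interesting case) the action $\Afftrans^1 \times \Gen_{1,S'/S} \to \Gen_{1,S'/S}$ has trivial stabilizers, so it is free.

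I would then unpack the stack quotient. A $T$-point of $\stquot{\Gen_{1,S'/S}/\Afftrans^1}$ is a pair $(P, f)$ where $P \to T$ is an $\Afftrans^1$-torsor and $f : P \to \Gen_{1,S'/S}$ is an $\Afftrans^1$-equivariant morphism; a morphism of such pairs is an $\Afftrans^1$-equivariant isomorphism of torsors commuting with $f$. The automorphisms of $(P,f)$ are sections of the stabilizer group scheme of $f$, which vanishes by freeness of the action. Thus $\stquot{\Gen/\Afftrans^1}$ is fibered in sets, i.e.\ is a sheaf (in the topology for which $\Afftrans^1$-torsors trivialize, e.g.\ \'etale, since $\Afftrans^1$ is smooth).

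To identify this sheaf with the sheaf quotient $\Gen/\Afftrans^1$, I would choose an \'etale cover $U \to T$ trivializing $P$; then the data of $(P,f)$ over $U$ amounts to an equivariant map $U \times \Afftrans^1 \to \Gen$, equivalently a $U$-point of $\Gen$, with two trivializations related by the $\Afftrans^1$-action. This is precisely the description of the \'etale sheafification of the presheaf $T \mapsto \Gen(T)/\Afftrans^1(T)$, i.e.\ the sheaf quotient $\Gen/\Afftrans^1$.

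There is essentially no serious obstacle; the only point requiring care is the edge case in which $S' \to S$ is an isomorphism (so $n = 1$), where the $\Afftrans^1$-action on $\Gen = \Aff^1_S$ has $\GG_m$ stabilizers and the conclusion fails. I would handle this either by tacitly restricting to the case $S' \not\cong S$ (consistent with the hypothesis of Lemma \ref{lem:affauttrivialstabs}) or by remarking on the exclusion explicitly.
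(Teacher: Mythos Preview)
Your proposal is correct and follows essentially the same approach as the paper: invoke Lemma~\ref{lem:affauttrivialstabs} to get freeness of the $\Afftrans^1$-action, then appeal to the general fact that a free group action has stack quotient equal to the sheaf quotient. The paper's proof is the one-line version of what you wrote, and it likewise leaves the $n=1$ edge case implicit.
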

\begin{proof}
If $G \action X$ is a free action, the stack quotient $\stquot{X/G}$ coincides with the sheaf quotient $X/G$. 
\end{proof}

\begin{remark}\label{rmk:affineequivvstwistedmon}
If $S$ is normal, Corollary \ref{cor:stquot_is_sheaf} tells us that an $\Afftrans^1$-twisted monogenerator is the same as a global section $\Gamma(S, \Gen/\Afftrans^1)$. Equivalence classes of monogenerators are given by the presheaf quotient $\Gamma(S, \Gen)/\Gamma(S, \Afftrans^1)$. 

Affine equivalence classes of monogenerators thereby relate to twisted monogenerators in an exact sequence of pointed sets: 
\[\Gamma(S, \Gen)/\Gamma(S, \Afftrans^1) \to \Gamma(S, \Gen/\Afftrans^1) \to H^1(S, \Afftrans^1).\]
As in sheaf cohomology, the second map takes $\ol{\theta}$ to its torsor of lifts $\delta(\ol{\theta})$ in $\Gen_{S'/S}$: 
\[
  \delta(\ol{\theta})(U) = \{ f \in \Gen(U) : f + \Afftrans^1(U) = \ol{\theta}|_U \}.
\]
A section of the sheaf quotient 
\[\overline{\theta} \in \Gamma(S, \Gen/\Afftrans^1)\]
lifts to an affine equivalence class in the presheaf quotient $\theta \in \Gamma(S, \Gen)/\Gamma(S, \Afftrans^1)$ if and only if the induced $\Afftrans^1$-torsor is trivial. 

The exact sequence is analogous to Cartier divisors. If $X$ is an integral scheme with rational function field $K(X)$, the long exact sequence associated to
\[1 \to \OO_X^\ast \to K(X)^\ast \to K(X)^\ast/\OO_X^\ast \to 1\]
is analogous to the above.

\end{remark}

\begin{remark}

One can do the same with $\GG_m$, or any other group that acts freely. Compare twisted monogenerators $\PGen = \stquot{\Gen/\GG_m}$ with ordinary monogenerators $\Gen_{S'/S}$ up to $\GG_m$-equivalence to obtain a sequence
\[\Gamma(S, \Gen)/\Gamma(S, \GG_m) \to \Gamma(S, \Gen/\GG_m) \to H^1(S, \GG_m).\]
Freeness of the action is necessary to identify the  stack quotient with the ordinary sheaf quotient. 

\end{remark}

Sometimes, being $\GG_m$-twisted monogenic is the same as being $\Afftrans^1$-twisted monogenic:

\begin{proposition}\label{prop:affequivlinebund}
If $S = \Spec A$ is affine, all $\Afftrans^1$-torsors on $S$ are induced by $\GG_m$-torsors:
\[H^1(\Spec A, \GG_m) \simeq H^1(\Spec A, \Afftrans^1).\]
The corresponding twisted forms of $\Aff^1$ are the same, so we can furthermore identify $\GG_m$-twisted monogenerators with $\Afftrans^1$-twisted monogenerators. 
\end{proposition}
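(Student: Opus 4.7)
The plan is to exploit the semidirect product decomposition $\Afftrans^1 = \GG_a \rtimes \GG_m$ and deduce the bijection from the vanishing of the cohomology of $\GG_a$ on an affine scheme. Recall this realizes $\Afftrans^1$ as an extension fitting in the split short exact sequence of Zariski sheaves of groups
\[
1 \to \GG_a \to \Afftrans^1 \to \GG_m \to 1,
\]
where the section $\GG_m \hookrightarrow \Afftrans^1$ sends $u \mapsto (0,u)$.

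First I would pass to the associated exact sequence of pointed sets in nonabelian Zariski cohomology
\[
\GG_m(A) \to H^1(S, \GG_a) \to H^1(S, \Afftrans^1) \to H^1(S, \GG_m).
\]
Since $\GG_a$ is identified with the quasi-coherent sheaf $\OO_S$, Serre vanishing gives $H^1(\Spec A, \GG_a) = H^1(\Spec A, \OO_S) = 0$, so the map $H^1(S, \Afftrans^1) \to H^1(S, \GG_m)$ has trivial kernel. Since this is a map of pointed sets obtained from a sequence with non-abelian middle term, I would promote triviality of the kernel to injectivity by the standard twisting trick: after twisting an $\Afftrans^1$-torsor $P$ by itself, the inner form of $\GG_a$ is again $\GG_a$ (conjugation by $\Afftrans^1$ preserves $\GG_a$), so all fibers of the map still have trivial cohomology and hence are singletons. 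Surjectivity is immediate: the splitting $\GG_m \hookrightarrow \Afftrans^1$ induces a section $H^1(S, \GG_m) \to H^1(S, \Afftrans^1)$, sending a line bundle $L$ to the associated $\Aff^1$-bundle. This yields the bijection $H^1(\Spec A, \GG_m) \simeq H^1(\Spec A, \Afftrans^1)$.

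Next I would upgrade the torsor statement to the statement about twisted monogenerators. Because $\GG_m \subseteq \Afftrans^1$ acts on $\Aff^1$ by scaling while $\Afftrans^1$ acts by general affine transformations, the associated bundle construction is compatible: if $Q$ is a $\GG_m$-torsor with associated line bundle $L = Q \times^{\GG_m} \Aff^1$, then the extended $\Afftrans^1$-torsor $P = Q \times^{\GG_m} \Afftrans^1$ has associated bundle $P \times^{\Afftrans^1} \Aff^1 \cong L$ canonically. Conversely, given an $\Afftrans^1$-twisted monogenerator $\theta : S' \hookrightarrow V$ into an $\Aff^1$-bundle $V$, the bijection above provides a reduction of structure group, presenting $V$ as the underlying space of a line bundle $L$; the closed immersion $\theta$ is unaffected. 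Both constructions are visibly inverse on equivalence classes (they respect the action of $\GG_m(A) \subseteq \Afftrans^1(A)$), which completes the identification.

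I expect the main obstacle to be the care needed with nonabelian Zariski cohomology, since $\GG_a$ is not central in $\Afftrans^1$ and the usual long exact sequence does not extend past $H^1$. The twisting argument for injectivity, using that inner forms of $\GG_a$ remain $\GG_a$, is the delicate step; everything else reduces to the vanishing of $H^1(\Spec A, \OO_S)$ together with the splitting of $\Afftrans^1 \twoheadrightarrow \GG_m$.
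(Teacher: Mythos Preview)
Your proposal is correct and follows essentially the same approach as the paper: both use the short exact sequence $1 \to \GG_a \to \Afftrans^1 \to \GG_m \to 1$, invoke Serre vanishing for $H^1(\Spec A, \GG_a)$, and deduce the bijection from the induced cohomology sequence together with the splitting $\GG_m \hookrightarrow \Afftrans^1$. If anything, you are more careful than the paper, which is terse about the nonabelian injectivity step; your explicit twisting argument (noting that inner forms of $\GG_a$ under $\Afftrans^1$ remain $\GG_a$) fills a gap the paper leaves implicit.
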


\begin{proof}

The maps
\[
  \begin{minipage}{2.5cm}
  \begin{center}
  $\Aff^1 \to \Afftrans^1$\\
  $a \mapsto (x \mapsto x + a)$
  \end{center}
  \end{minipage}
  \quad \text{ and } \quad
  \begin{minipage}{2.8cm}
  \begin{center}
  $\Afftrans^1 \to \GG_m$ \\
  $(x \mapsto bx + a) \mapsto b$
  \end{center}
  \end{minipage}
\]
fit into a short exact sequence
\[
  0 \to \Aff^1 \to \Afftrans^1 \to \GG_m \to 1.
\]
The sheaf $\Afftrans^1$ is \emph{not} commutative. Cohomology \emph{sets} $H^i(S, \Afftrans^1)$ are nevertheless defined for $i = 0, 1, 2$. By Serre Vanishing \cite[01XB]{sta} we have $H^i(\Spec A, \Aff^1) = 0$ for $i \neq 0$, and therefore $\Gamma(\Afftrans^1) \to \Gamma(\GG_m)$ is surjective, yielding an identification in all nonzero degrees:
\[H^i(\Spec A, \text{Aff}^1) \simeq H^i(\Spec A, \GG_m), \quad \quad i = 1, 2. \]

The action $\GG_m \action \Aff^1$ is the restriction of that of $\Afftrans^1$, factoring
\[\GG_m \subseteq \Afftrans^1 \to \AAut(\Aff^1).\]
The corresponding twisted forms of $\Aff^1$ are the same. 
\end{proof}

\subsection{Consequences of $\Afftrans^1$-twisted monogenicity}
We conclude with several consequences of twisted monogenicity and our Theorem \ref{thm:twistedClassNumb1} that shows twisted monogenerators detect class number-one number rings.

The following theorem constrains the line bundles that may be used for twisted monogenicity. This result constrains the possible Steinitz classes of a twisted monogenic extension. This is an effective constraint in geometric situations: see Lemma \ref{lem:twisted_monogenic_curve_degrees}. The structure of the set of ideals corresponding to Steinitz classes of number rings is the subject of a variety of open questions. This has traditionally been the domain of class field theory; two notable papers are \cite{McCulloh} and \cite{Cobbe}. For $n>0$, write $d(n)=\gcd\big(\{ \frac{\ell-1}{2}: \ell \text{ prime, } \ell\mid n\}\big)$. Theorems 1 and 2 of \cite{McCulloh} imply that if $K$ contains a primitive $n$th root of unity, then the Steinitz classes of Galois extensions $L/K$ of degree $n$ are precisely the $d(n)^{\text{th}}$ powers in the class group of $K$.
Compare this to the following:

\begin{theorem} \label{thm:twisted_mono_triangular_power}
Suppose $S' \to S$ is $\GG_m$-twisted monogenic, with an embedding into a line bundle $E$. Let $\mathcal{E}$ be the sheaf of sections of $E$. Then
\[
  \det(\pi_\ast \OO_{S'})\coloneqq   \wedge^{top} \pi_\ast \OO_{S'} \simeq \mathcal{E}^{-\frac{n(n-1)}{2}}
\]
in $\Pic(S)$.

In particular, if an extension of number rings $\Ints_L/\Ints_K$ is twisted monogenic, then its Steinitz class is an $\frac{n(n-1)}{2}$th power in the class group.
\end{theorem}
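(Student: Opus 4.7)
The plan is to exploit the filtration on $\pi_*\OO_{S'}$ induced by the embedding into the line bundle. A $\GG_m$-twisted monogenerator is by the earlier remark the same datum as a closed embedding $S' \hookrightarrow E$ of $S$-schemes. Since $E = \underline{\Spec}_S \mathrm{Sym}^\bullet \mathcal{E}^\vee$, this is equivalent to a surjection of $\OO_S$-algebras
\[
  \mathrm{Sym}^\bullet_{\OO_S} \mathcal{E}^\vee = \bigoplus_{k \geq 0} \mathcal{E}^{-k} \twoheadrightarrow \pi_*\OO_{S'}.
\]
Define an increasing filtration $F^\bullet$ on $\pi_*\OO_{S'}$ by letting $F^k$ be the image of $\bigoplus_{j \leq k} \mathcal{E}^{-j}$, so that $\OO_S = F^0 \subseteq F^1 \subseteq \cdots \subseteq \pi_*\OO_{S'}$.

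Next I would identify the graded pieces. Working Zariski-locally on $S$, a trivialization of $\mathcal{E}$ turns the twisted monogenerator into an ordinary monogenerator $\theta$, so that $\pi_*\OO_{S'}$ is freely generated as an $\OO_S$-module by $1, \theta, \ldots, \theta^{n-1}$ and $F^k$ is spanned locally by $1, \theta, \ldots, \theta^k$. In particular $F^{n-1} = \pi_*\OO_{S'}$ (higher powers of $\theta$ are reduced using the local monic minimal polynomial), and each quotient $F^k/F^{k-1}$ is locally free of rank one. The induced surjection $\mathcal{E}^{-k} \twoheadrightarrow F^k/F^{k-1}$ between line bundles must then be an isomorphism for $k = 0, 1, \ldots, n-1$.

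Finally, since the determinant of a filtered locally free sheaf is the tensor product of the determinants of its graded pieces, one obtains
\[
  \det \pi_*\OO_{S'} \;\simeq\; \bigotimes_{k=0}^{n-1} \mathcal{E}^{-k} \;\simeq\; \mathcal{E}^{-n(n-1)/2}
\]
in $\Pic(S)$. For the corollary, when $S' \to S$ is an extension of number rings $\Ints_L/\Ints_K$, the Steinitz class is by definition the class of $\det \pi_*\OO_{S'}$ in $\Pic(\Spec \Ints_K) = \Cl(K)$, and the displayed isomorphism exhibits it as $\bigl([\mathcal{E}]^{-1}\bigr)^{n(n-1)/2}$, hence an $n(n-1)/2$-th power.

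The only substantive point is confirming that the natural surjection $\mathcal{E}^{-k} \twoheadrightarrow F^k/F^{k-1}$ is actually an isomorphism, and this is precisely where the monogenicity hypothesis (rather than merely generation) is needed: local triviality of $\mathcal{E}$ combined with the local monogenerator property ensures $1, \theta, \dots, \theta^{n-1}$ form an $\OO_S$-module basis, so that each graded piece is a genuine line bundle rather than a proper quotient. Everything else is bookkeeping with determinants of short exact sequences of locally free sheaves.
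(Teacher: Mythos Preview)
Your proof is correct and follows essentially the same approach as the paper: both use the surjection $\mathrm{Sym}^\bullet \mathcal{E}^\vee \twoheadrightarrow \pi_*\OO_{S'}$ and reduce locally to the basis $1,\theta,\ldots,\theta^{n-1}$. The only difference is cosmetic: the paper argues that the restriction of this surjection to the submodule $\bigoplus_{i=0}^{n-1}\mathcal{E}^{-i}$ is already an isomorphism (a direct-sum splitting), whereas you extract only the filtration with those graded pieces---either yields the same determinant.
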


\begin{proof}
Write $\Sym \ast \mathcal E \coloneqq   \bigoplus_d \Sym d \mathcal E$ for the symmetric algebra. Recall that $E \simeq \VV(\cal E^\vee) \coloneqq   \SSpec{S}{\Sym\ast (\mathcal{E}^\vee) }$.
We have a surjection of $\OO_S$-modules
\[
  \Sym \ast \mathcal{E}^\vee \twoheadrightarrow \pi_\ast \OO_{S'},
\]
which we claim factors through the projection of
$\OO_S$-modules $\Sym \ast \mathcal{E}^\vee \to \bigoplus^{n-1}_{i=0} \Sym {i} \mathcal E^\vee$. Such a factorization is a local question and local factorizations automatically glue because there is at most one. Locally, we may assume $S' \to S$ is induced by a ring homorphism $A \to B$ and $\mathcal{E}^\vee$ is trivialized. We have a factorization of $A$-modules
\[\begin{tikzcd}
\Sym \ast \mathcal{E}^\vee \cong A[t] \ar[rr, two heads] \ar[dr]        && B      \\
        &\bigoplus^{n-1}_{i=0} \Sym {i} \mathcal E^\vee \simeq \bigoplus^{n-1}_{i=0} A \cdot t^i \ar[ur, dashed]
\end{tikzcd}\]
due to the existence of a monic polynomial $m_\theta(t)$ of degree $n$ for the image $\theta$ of $t$ in $\OO_{S'}$ \cite[Lemma 2.11]{abhs_paper_1}. The $A$-modules $\bigoplus^{n-1}_{i=0} A\cdot t^i$ and $B$ are abstractly isomorphic, and any surjective endomorphism of a finitely generated module is an isomorphism \cite[Theorem 2.4]{matsumura}.

We conclude that globally
\[
  \pi_\ast \OO_{S'} \simeq \bigoplus_{i=0}^{n-1} \Sym i \mathcal E^\vee.
\]
Since $\mathcal{E}$ is invertible, $\Sym i \mathcal E^\vee = (\mathcal E^\vee)^{i}$. Taking the determinant,
\[
  \det(\pi_\ast \OO_{S'}) = \det\left(\bigoplus_{i = 0}^{n - 1} (\mathcal{E}^{\vee})^{i} \right) = \mathcal{E}^{-\sum_{i = 0}^{n - 1} i} = \mathcal{E}^{-\frac{n(n-1)}{2}}.
\]
\end{proof}

The literature abounds with finiteness results on equivalence classes of monogenerators, for example:

\begin{theorem}[{\cite[Theorem 5.4.4]{EvertseGyoryBook}}] \label{thm:finiteness_monogenicity} 
Let $A$ be an integrally closed integral domain of characteristic zero and finitely generated over $\mathbb{Z}$. Let $K$ be the quotient field of $A$, $\Omega$ a finite \'etale $K$-algebra with $\Omega \neq K$, 
and $B$ the integral closure of $A$ in $\Omega$. Then there are finitely many equivalence classes of monogenic generators of $B$ over $A$.
\end{theorem}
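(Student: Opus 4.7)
The plan is to reduce the statement to a classical finiteness result for $S$-unit equations, following the standard Evertse-Győry strategy. Affine equivalence $\theta \sim u\theta + v$ absorbs exactly two parameters of a monogenerator, so the invariants one wants to control are cross-ratios of the conjugates of $\theta$.

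First I would reduce to the case where $\Omega$ is a field. Since $\Omega$ is a finite \'etale $K$-algebra, we may write $\Omega \simeq \prod_{i=1}^r L_i$ with $L_i/K$ finite separable; correspondingly $B \simeq \prod_i B_i$ where $B_i$ is the integral closure of $A$ in $L_i$. A monogenerator $\theta = (\theta_1, \dots, \theta_r)$ determines each $\theta_i$, and Theorem \ref{thm:artin_monogenicity} (item \eqref{artinmono2}, applied fiberwise) combined with separability of the $L_i/K$ shows that, up to affine equivalence, the $\theta_i$ must be pairwise distinct modulo every residue field of $A$ that appears. So the main task is to bound each $\theta_i$ up to affine equivalence; that reduces us to $\Omega = L$ a single finite separable field extension of $K$.

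Next, set $n = [L:K]$ and let $\theta^{(1)},\dots,\theta^{(n)}$ be the conjugates of $\theta$ in a Galois closure $\widetilde{L}/K$, with integral closure $\widetilde{B}$ of $A$ in $\widetilde{L}$. The index form evaluated at $\theta$ equals $\prod_{i<j}(\theta^{(i)} - \theta^{(j)})$ up to sign, and the hypothesis that $\theta$ is a monogenerator forces this product to be a unit in $A$. Combined with the fact that each factor $\theta^{(i)}-\theta^{(j)}$ divides the discriminant of a fixed basis, one concludes that every $\theta^{(i)} - \theta^{(j)}$ is an $S$-unit in $\widetilde B$, where $S$ is a finite set of primes depending only on $\disc(B/A)$ and the chosen Galois closure (\emph{not} on $\theta$). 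This is the step where one uses that $A$ is integrally closed and finitely generated over $\mathbb Z$ so that a good theory of $S$-integers and $S$-units is available.

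Now for any triple of distinct indices $i,j,k$, the linear identity $(\theta^{(i)}-\theta^{(j)}) + (\theta^{(j)}-\theta^{(k)}) = (\theta^{(i)}-\theta^{(k)})$ divided by $\theta^{(i)}-\theta^{(k)}$ yields $x_{ijk} + y_{ijk} = 1$ with $x_{ijk},y_{ijk}\in \widetilde B[S^{-1}]^\ast$. The $S$-unit equation theorem for finitely generated domains over $\mathbb Z$ (Evertse-Győry, generalizing Mahler-Siegel-Lang) gives finitely many solutions. Hence the tuples of cross-ratios $\bigl((\theta^{(i)}-\theta^{(j)})/(\theta^{(1)}-\theta^{(2)})\bigr)_{i<j}$ take only finitely many values. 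The cross-ratios are precisely the affine-equivalence invariants of $\theta$ (the pair $(u,v)$ in $\theta\mapsto u\theta+v$ transforms each difference by the unit $u$, and the translation drops out), so at most finitely many affine equivalence classes of monogenerators can arise. The main obstacle, and the only nonformal ingredient, is the invocation of the $S$-unit equation theorem in the generality of integrally closed finitely generated $\mathbb Z$-algebras; once granted, the remaining bookkeeping (passing from cross-ratios to affine classes, and reassembling the factors of the \'etale algebra) is routine.
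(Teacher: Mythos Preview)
The paper does not prove this theorem: it is quoted verbatim from \cite[Theorem 5.4.4]{EvertseGyoryBook} with no argument supplied, and is used only as a black box in the proof of Corollary~\ref{thm:fintwistedmons}. Your sketch is essentially the standard Evertse--Gy\H{o}ry proof from that reference, so in that sense you are reproducing the intended argument rather than diverging from it.

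Two small corrections to your write-up. First, the index form with respect to an $A$-basis $e_1,\dots,e_n$ of $B$ is not literally $\prod_{i<j}(\theta^{(i)}-\theta^{(j)})$; rather $\disc(1,\theta,\dots,\theta^{n-1}) = \localindex(\theta)^2\cdot\disc(e_1,\dots,e_n)$, so the Vandermonde product equals $\localindex(\theta)$ times a fixed square root of the discriminant. The conclusion you want---that each $\theta^{(i)}-\theta^{(j)}$ is an $S$-unit for $S$ containing the primes dividing $\disc(B/A)$---still follows, and you state it correctly in the next sentence. Second, the detour through Theorem~\ref{thm:artin_monogenicity} to reduce to the field case is both unnecessary and not quite what that theorem says: one can work directly with the full set of $K$-algebra embeddings $\Omega\hookrightarrow\overline{K}$ (which is exactly the union of the conjugates over all factors $L_i$) and run the $S$-unit argument uniformly. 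With those adjustments the sketch is sound, and the passage from finitely many cross-ratio tuples to finitely many $\Afftrans^1(A)$-orbits (using that $u,v$ are forced into $K$ by Galois invariance and then into $A^\ast,A$ by integrality) is indeed routine.
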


We have an analogous finiteness result for equivalence classes of $\Afftrans^1$-twisted monogenerators:

\begin{corollary}\label{thm:fintwistedmons}
Let $A$, $K$, $\Omega$, $B$ be as in Theorem \ref{thm:finiteness_monogenicity}, with $S' \to S$ induced from $A \to B$. Assume $\Pic(A)$ is finitely generated. Then there are finitely many equivalence classes of $\Afftrans^1$-twisted monogenerators for $S' \to S$.
\end{corollary}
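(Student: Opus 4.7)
By Corollary \ref{cor:stquot_is_sheaf}, which applies because $A$ is integrally closed (hence normal) and $B \neq A$ (since $\Omega \neq K$), the set of equivalence classes of $\Afftrans^1$-twisted monogenerators is identified with $\Gamma(S, \Gen_{S'/S}/\Afftrans^1)$. I would analyze this set via the exact sequence of pointed sets from Remark \ref{rmk:affineequivvstwistedmon},
\[
\Gamma(S, \Gen_{S'/S})/\Gamma(S, \Afftrans^1) \longrightarrow \Gamma(S, \Gen_{S'/S}/\Afftrans^1) \xrightarrow{\delta} H^1(S, \Afftrans^1),
\]
thereby reducing the desired finiteness to three subtasks: (i) finiteness of the left-hand term, (ii) finiteness of the image of $\delta$, and (iii) finiteness of each nonempty fiber of $\delta$.

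Subtask (i) is precisely Theorem \ref{thm:finiteness_monogenicity}. For (ii), Proposition \ref{prop:affequivlinebund} identifies $H^1(S, \Afftrans^1) \simeq H^1(S, \GG_m) = \Pic(A)$, and Theorem \ref{thm:twisted_mono_triangular_power} forces the image of $\delta$ to lie in the subset
\[
\{[\mathcal{E}] \in \Pic(A) : [\mathcal{E}]^{n(n-1)/2} = [\det \pi_*\OO_{S'}]^{-1}\}.
\]
When nonempty, this is a torsor under the $n(n-1)/2$-torsion subgroup $\Pic(A)[n(n-1)/2]$, which is finite because $\Pic(A)$ is assumed finitely generated.

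The main obstacle is subtask (iii). Standard non-abelian cohomology identifies the fiber of $\delta$ over a class $[\mathcal{E}]$ in its image with the set of equivalence classes of closed immersions $\theta : S' \hookrightarrow \VV(\mathcal{E}^\vee)$ modulo the affine automorphism group $A^\ast \ltimes \Gamma(S, \mathcal{E})$ of $\VV(\mathcal{E}^\vee)$; equivalently, these are generating twisted sections $\tilde\theta \in \Gamma(S', \pi^*\mathcal{E})$ modulo the action $\tilde\theta \mapsto u\tilde\theta + \pi^*v$. I would prove this finiteness by a twisted analogue of Theorem \ref{thm:finiteness_monogenicity}, deduced from the original result via descent: pulling back along a suitable finite integral cover $A \to A'$ (still of the form required by Theorem \ref{thm:finiteness_monogenicity}) that trivializes $\mathcal{E}$ converts a twisted monogenerator into an ordinary one, to which the existing finiteness result applies; bounding the fibers of this base-change map on equivalence classes then yields the required finiteness upstairs. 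The delicate point, where most of the work lies, is constructing such a cover when $\mathcal{E}$ has infinite order in $\Pic(A)$; one handles this by fixing a reference class in the image of $\delta$ and exploiting that the remaining classes differ from it only by torsion in $\Pic(A)[n(n-1)/2]$, reducing to trivializing a torsion line bundle.
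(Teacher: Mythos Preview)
Your setup agrees with the paper: both use the sequence of Remark \ref{rmk:affineequivvstwistedmon}, and both bound the image of $\delta$ by the finite set of $\tfrac{n(n-1)}{2}$th roots of the Steinitz class in $\Pic(A)$ (Theorem \ref{thm:twisted_mono_triangular_power}). The divergence is in how you handle the fibers, and your subtask (iii) has a genuine gap.

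Your plan is to trivialize each relevant line bundle $\mathcal{E}$ by a finite integral extension $A \to A'$ still satisfying the hypotheses of Theorem \ref{thm:finiteness_monogenicity}. As you note, this is problematic when $\mathcal{E}$ has infinite order. Your proposed fix --- fixing a reference class $[\mathcal{E}_0]$ and observing that every other class differs from it by torsion --- does not close the gap: passing to a cover that kills the torsion difference $\mathcal{L}$ only identifies $\mathcal{E}$ with $\mathcal{E}_0$ after base change, but you have not shown finiteness for the fiber over $[\mathcal{E}_0]$ itself, and $\mathcal{E}_0$ may still have infinite order. There is no reason the reference class can be taken trivial (indeed the Steinitz class need not be trivial), so the reduction stalls.

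The paper sidesteps this entirely. Rather than a finite integral cover, it uses a finite \emph{Zariski} open cover $S = \bigcup_i U_i$ by affines that simultaneously trivializes all of the finitely many $\tfrac{n(n-1)}{2}$th roots of the Steinitz class --- such a cover always exists, regardless of order in $\Pic(A)$. Each localization $A[f_i^{-1}]$ again satisfies the hypotheses of Theorem \ref{thm:finiteness_monogenicity}. A diagram chase with the restriction maps then shows that the injective map $\rho : \Gamma(S, \Gen/\Afftrans^1) \hookrightarrow \prod_i \Gamma(U_i, \Gen/\Afftrans^1)$ lands in the image of $\prod_i \Gamma(U_i, \Gen)/\Gamma(U_i, \Afftrans^1)$, which is finite by Theorem \ref{thm:finiteness_monogenicity} applied on each $U_i$. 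This is cleaner than your approach and requires no analysis of the fibers of $\delta$ individually.
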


\begin{proof}

We essentially use the sequence
\[\Gamma(S, \Gen)/\Gamma(S, \Afftrans^1) \to \Gamma(S, \Gen/\Afftrans^1) \to H^1(S, \Afftrans^1)\]
of Remark \ref{rmk:affineequivvstwistedmon}. If this \emph{were} a short exact sequence of groups, the outer terms being finite would force the middle term to be; our proof is similar in spirit. 

Since $S$ is quasicompact and there are finitely many elements of the Picard group which are $\frac{n(n-1)}{2}$th-roots of the Steinitz class
\[\Pic(S) \coloneqq   H^1(S, \GG_m) = H^1(S, \Afftrans^1),\] 
we can find an affine open cover $S = \bigcup_i U_i$ by finitely many open sets of $S$ that simultaneously trivializes all $\frac{n(n-1)}{2}$th-roots of the Steinitz class on $S$. 

The above sequence of presheaves restricts to the $U_i$'s in a commutative diagram
\[\begin{tikzcd}
\Gamma(S, \Gen)/\Gamma(S, \Afftrans^1) \ar[r] \ar[d]      &\Gamma(S, \Gen/\Afftrans^1) \ar[r] \ar[d, hook, "\rho"]    &H^1(S, \Afftrans^1) \ar[d]        \\
\prod \Gamma(U_i, \Gen)/\Gamma(U_i, \Afftrans^1) \ar[r]    &\prod \Gamma(U_i, \Gen/\Afftrans^1) \ar[r]      &\prod H^1(U_i, \Afftrans^1).
\end{tikzcd}\]
The restriction $H^1(S, \Afftrans^1) \to \prod H^1(U_i, \Afftrans^1)$ is zero on the $\frac{n(n-1)}{2}$th-roots of the Steinitz class by construction of the $U_i$'s. The restriction $\rho : \Gamma(S, \Gen/\Afftrans^1) \hookrightarrow \prod \Gamma(U_i, \Gen/\Afftrans^1)$ is injective by the sheaf condition. A diagram chase reveals that the restriction $\rho(\overline{\theta})$ of any section $\overline{\theta} \in \Gamma(S, \Gen/\Afftrans^1)$ is in the image of $\prod \Gamma(U_i, \Gen)/\Gamma(U_i, \Afftrans^1)$. Theorem \ref{thm:finiteness_monogenicity} asserts that each set $\Gamma(U_i, \Gen)/\Gamma(U_i, \Afftrans^1)$ is finite. 
\end{proof}

\begin{lemma} \label{lem:deg2_is_twisted_mono}
Degree-two extensions are all $\Afftrans^1$-twisted monogenic. If $S$ is affine, they are also $\GG_m$-twisted monogenic. 
\end{lemma}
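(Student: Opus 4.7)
The plan is to show that every rank-two finite locally free algebra has a canonical line bundle attached to it, and that local splittings of this line bundle provide the requisite local monogenerators, with transition data living in $\Afftrans^1$.

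More precisely, set $\mathcal{E} = \pi_\ast \OO_{S'}$, a locally free $\OO_S$-module of rank $2$. The unit map $\OO_S \hookrightarrow \mathcal{E}$ is locally split (since locally $\mathcal{E}$ admits a basis of the form $\{1,e\}$), so the quotient $\mathcal{L} := \mathcal{E}/\OO_S$ is an invertible $\OO_S$-module. First I would pick a Zariski cover $\{U_i\}$ of $S$ trivializing both $\mathcal{L}$ and the extension $0 \to \OO_S \to \mathcal{E} \to \mathcal{L} \to 0$, choose a local generator $\bar\theta_i$ of $\mathcal{L}(U_i)$, and lift it to $\theta_i \in \mathcal{E}(U_i)=\Gamma(U_i, \pi_\ast \OO_{S'})$. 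Then $\{1,\theta_i\}$ is an $\OO_S(U_i)$-basis of $\mathcal{E}(U_i)$, and since $\mathcal{E}(U_i)$ has rank $2$, the subalgebra generated by $\theta_i$ already contains this basis; hence $\theta_i$ is a monogenerator of $S'\times_S U_i$ over $U_i$.

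Next I would compute the transition data. On each overlap $U_{ij}$ there is a unique unit $a_{ij}\in \OO_S(U_{ij})^\ast$ with $\bar\theta_i = a_{ij}\bar\theta_j$ (these are the transition functions of $\mathcal{L}$), and hence there is a unique $b_{ij}\in \OO_S(U_{ij})$ with
\[
\theta_i = a_{ij}\theta_j + b_{ij}.
\]
Substituting $\theta_j = a_{jk}\theta_k + b_{jk}$ into this equality and comparing with $\theta_i = a_{ik}\theta_k + b_{ik}$ yields the usual $\Afftrans^1$ cocycle identities $a_{ik}=a_{ij}a_{jk}$ and $b_{ik}=a_{ij}b_{jk}+b_{ij}$. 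Thus $\bigl(U_i,\theta_i,(a_{ij},b_{ij})\bigr)$ is an $\Afftrans^1$-twisted monogenerator in the sense of Definition~\ref{defn:twistedmon}.

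For the second assertion, when $S$ is affine Proposition~\ref{prop:affequivlinebund} identifies $\Afftrans^1$-twisted monogenerators with $\GG_m$-twisted monogenerators, so the result above immediately upgrades. The only real content is the existence of the invertible quotient $\mathcal{L}$ and the observation that lifts of local generators of $\mathcal{L}$ are automatically monogenerators in rank two; everything else is bookkeeping. The mild obstacle is verifying that the two-term relations among the $\theta_i$ are genuinely cocycles valued in $\Afftrans^1$, but this is forced by the rank-two rigidity, since once the images in $\mathcal{L}$ match up coherently, the difference in $\OO_S$ is uniquely determined.
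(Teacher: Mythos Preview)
Your proof is correct and follows essentially the same approach as the paper's: both arguments choose, locally on $S$, an element $\theta$ so that $\{1,\theta\}$ is a basis of $\pi_\ast\OO_{S'}$, observe that such a $\theta$ is automatically a monogenerator in rank two, and then check that any two such choices differ by an element of $\Afftrans^1$; the $\GG_m$ statement in the affine case is deduced from Proposition~\ref{prop:affequivlinebund} in both. Your version is slightly more structured in that you package the local choices via the invertible quotient $\mathcal{L}=\pi_\ast\OO_{S'}/\OO_S$ and verify the cocycle identity explicitly, whereas the paper simply writes $\theta_1=a+b\theta_2$, $\theta_2=c+d\theta_1$ and reads off $bd=1$, but the content is identical.
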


\begin{proof}
Localize and choose a basis containing 1 to write $\pi_\ast \OO_{S'} \simeq \OO_S \oplus \OO_S \theta_1$ for some $\theta_1 \in \Gamma(\OO_{S'})$. Given an element $\theta_2$ so that $\{1, \theta_2\}$ is also a basis, we may write 
\[\theta_1 = a + b \theta_2, \quad \quad \quad \theta_2 = c + d \theta_1, \quad \quad \quad a, b, c, d \in \OO_S. \]
Hence $bd=1$ are units, and the transition functions come from $\Afftrans^1 = \Aff^1 \rtimes \GG_m$. By choosing such generators on a cover of $S$, one obtains a twisted monogenerator. Proposition \ref{prop:affequivlinebund} further refines our affine bundle to a line bundle. 
\end{proof}

\begin{theorem}\label{thm:twistedClassNumb1}
A number ring $\Ints_K$ has class number one if and only if all twisted monogenic extensions of $\Ints_K$ are in fact monogenic. 
\end{theorem}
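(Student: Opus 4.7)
The plan is to prove the two directions separately, using that a $\GG_m$- or $\Afftrans^1$-twisted monogenerator for $\Ints_L/\Ints_K$ is equivalent to a closed embedding $\Spec \Ints_L \hookrightarrow E$ into a line bundle (respectively, an $\Afftrans^1$-bundle) $E$ on $\Spec \Ints_K$.

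For the forward direction, suppose $\Cl(\Ints_K) = 1$, so that $\Pic(\Spec \Ints_K) = 0$ and every line bundle on $\Spec \Ints_K$ is trivial. Since $\Spec \Ints_K$ is affine, Proposition \ref{prop:affequivlinebund} gives $H^1(\Spec \Ints_K, \Afftrans^1) \simeq H^1(\Spec \Ints_K, \GG_m) = 0$, so every $\Afftrans^1$-bundle is trivial as well. Triviality of $E$ then yields an isomorphism $E \simeq \Aff^1_{\Ints_K}$, converting any twisted monogenerator into an ordinary monogenerator.

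For the backward direction I would prove the contrapositive: assuming $\Cl(\Ints_K) \neq 1$, I must exhibit a twisted monogenic extension that is not monogenic. Fix a non-principal integral ideal $\mathfrak{a} \subseteq \Ints_K$ (coprime to $2\Ints_K$, after multiplying by a suitable principal ideal). The strategy is to construct a quadratic extension $L/K$ whose ring of integers $\Ints_L$, as an $\Ints_K$-module, is isomorphic to $\Ints_K \oplus \mathfrak{a}^{-1}$, so has nontrivial Steinitz class $[\mathfrak{a}]^{-1}$. By Lemma \ref{lem:deg2_is_twisted_mono}, such a quadratic extension is $\GG_m$-twisted monogenic over the affine base $\Spec \Ints_K$. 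But if it were monogenic, then $\Ints_L = \Ints_K[\theta] \simeq \Ints_K[x]/(f(x)) \simeq \Ints_K^2$ would be a free $\Ints_K$-module, contradicting the nontriviality of its Steinitz class.

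To build the quadratic extension, I would use weak approximation to select $\alpha \in \Ints_K$ with $(\alpha) = \mathfrak{a}^2 \mathfrak{c}$ for some nontrivial squarefree integral ideal $\mathfrak{c}$ coprime to both $\mathfrak{a}$ and $2\Ints_K$, and arrange that at each prime $\mathfrak{p} \mid \mathfrak{a}$ the image of $\alpha/\pi_\mathfrak{p}^{2 v_\mathfrak{p}(\mathfrak{a})}$ in the residue field $k(\mathfrak{p})$ is a non-square. Squarefreeness and nontriviality of $\mathfrak{c}$ together with non-principality of $\mathfrak{a}$ force $\alpha \notin K^{\times 2}$, so $L := K(\sqrt{\alpha})$ is a quadratic field extension, and the order $B := \Ints_K + \mathfrak{a}^{-1}\sqrt{\alpha} \subseteq L$ is visibly $\Ints_K \oplus \mathfrak{a}^{-1}$ as a module. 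The main technical obstacle is verifying that $B$ coincides with the full ring of integers $\Ints_L$; this is a prime-by-prime local check in which squarefreeness of $\mathfrak{c}$ handles the tamely ramified primes via a standard Eisenstein discriminant computation, the non-square condition forces inert local behavior at primes dividing $\mathfrak{a}$, and the coprimality of $\alpha$ to $2\Ints_K$ avoids the wildly ramified case entirely.
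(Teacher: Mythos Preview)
Your argument is correct and follows the same line as the paper's. The forward direction is identical (triviality of $\Pic$), and for the converse both you and the paper produce a quadratic extension with nontrivial Steinitz class, invoke Lemma~\ref{lem:deg2_is_twisted_mono} to get twisted monogenicity, and observe that monogenicity of a quadratic extension forces $\ZZ_L$ to be $\ZZ_K$-free.

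The only difference is packaging: the paper cites a theorem of Mann asserting that $K$ admits a quadratic extension without a relative integral basis if and only if $h(K)>1$, whereas you rebuild that result by hand. Your construction---take $\alpha$ with $(\alpha)=\mathfrak{a}^2\mathfrak{c}$, $\mathfrak{a}$ non-principal, $\mathfrak{c}$ squarefree---is in fact exactly the construction the paper attributes to Mann. One small simplification: your non-square residue condition at primes dividing $\mathfrak{a}$ is unnecessary. If the local unit $u=\alpha/\pi_\mathfrak{p}^{2v_\mathfrak{p}(\mathfrak{a})}$ happens to be a square, the prime splits and $B_\mathfrak{p}$ is still the full integral closure $\ZZ_{K,\mathfrak{p}}\times\ZZ_{K,\mathfrak{p}}$ (the relevant change-of-basis determinant is $-2v$, a unit away from $2$). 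Dropping that condition also removes the need for weak approximation; it suffices to take $\mathfrak{c}$ a prime in the class $[\mathfrak{a}]^{-2}$ avoiding $2\mathfrak{a}$.
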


\begin{proof}

If the class number of $K$ is one, then all line bundles on $\Spec \Ints_K$ are trivial and the equivalence is clear. Mann \cite{Mann} has shown that $K$ has quadratic extensions without an integral basis if and only if the class number of $K$ is not one: adjoin the square root of $\alpha$, where $(\alpha)=\gb^2\gc$ with $\gb$ non-principal and $\gc$ square-free. By Lemma \ref{lem:deg2_is_twisted_mono}, such an extension is necessarily $\GG_m$-twisted monogenic. As the monogenicity of quadratic extensions is equivalent to the existence of an integral basis, the result follows. 
\end{proof}

\begin{remark}
Theorem~\ref{thm:twistedClassNumb1} implies that the ring of integers of a number field is twisted monogenic over $\mathbb{Z}$ if and only if it is monogenic over $\mathbb{Z}$. Example~\ref{ex:2genicOverZ_paper2} thus provides an example of a number field which is not twisted monogenic. 
\end{remark}

Given that twisted monogenic extensions and monogenic extensions coincide over $\ZZ$, we should ask for an example where we have twisted monogenicity but not monogenicity. All degree 2 extensions of number rings are twisted monogenic as Lemma \ref{lem:deg2_is_twisted_mono} shows. Thus every quadratic extension without an integral basis is twisted monogenic but not monogenic, and \cite{Mann} provides a construction of such extensions. The aim of the following is the very explicit construction of a higher degree example of such an extension. Though we are ultimately unable to prove non-monogenicity in the following example, we hope it gives the reader a concrete sense of the concepts and methods employed in this section.

\begin{example}[Properly Twisted Monogenic, Not Quadratic] \label{ex:TwistedMonNotQuad}
Let $K=\QQ(\sqrt[3]{5\cdot 23})$ and let $\gp_3$, $\gp_5$, and $\gp_{23}$ be the unique primes of $K$ above 3, 5, and 23, respectively. 
One can compute $\mathfrak{p}_3 = (\rho_3\coloneqq 1970(\sqrt[3]{5\cdot23})^2 + 9580(\sqrt[3]{5\cdot23}) + 46587)$. 
Consider $\ZZ_L/\ZZ_K$, where $L=K(\sqrt[3]{23  \rho_3})$. On $D(\gp_{23})$, the local index form with respect to the local basis $\{1, \sqrt[3]{23   \rho_3}, (\sqrt[3]{23  \rho_3})^2\}$ is $b^3-23\rho_3c^3$. On $D(\gp_5)$, we have the local index form $B^3-5^2\rho_3C^3$ with respect to the local basis $\{1, \sqrt[3]{5^2  \rho_3}, (\sqrt[3]{5^2  \rho_3})^2\}$. We transition via $\sqrt[3]{23^2\cdot 5^2}/23$, which is not a global unit, so the extension $\Ints_L/\Ints_K$ is twisted monogenic. 

To see what is going on more explicitly, we investigate how the transitions affect the local index forms. We have
\[b^3-23\rho_3c^3=\frac{5^2}{23}B^3-\frac{5^4}{23^2}\cdot23\rho_3C^3=\frac{5^2}{23}B^3-\frac{5^4}{23}\rho_3C^3= \text{ a unit in }  \Ocal_{D(\gp_{23})}.\]
If $B$ and $C$ could be chosen to be $\ZZ_K$-integral so that local index form represented a unit of $\ZZ_K$, then $\sqrt[3]{5^2 \rho_3}$ would be a global monogenerator. However, $\gp_{5}$-adic valuations tell us $\sqrt[3]{5^2 \rho_3}$ is not a monogenerator. One can also apply Dedekind's index criterion to $x^3-5^2\rho_3$. Similarly, we have
\[B^3-5^2\rho_3C^3=\frac{23}{5^2}b^3-\frac{23^2}{5^4}\cdot5^2\rho_3c^3=\frac{23}{5^2}b^3-\frac{23^2}{5^2}\rho_3c^3= \text{ a unit in }  \Ocal_{D(\gp_{5})}.\]
If $b$ and $c$ could be chosen to be $\ZZ_K$-integral so that local index form represented a unit of $\ZZ_K$, then $\sqrt[3]{23 \rho_3}$ would be a global monogenerator. As above, the $\gp_{23}$-adic valuations tell us this cannot be the case. Again, we could also use polynomial-specific methods.

We have shown that $\ZZ_L/\ZZ_K$ is twisted monogenic, but it remains to show that the twisting is non-trivial. We need to show the ideal $\gp_5=(5,\sqrt[3]{5\cdot 23})$ is not principal. On $D(\gp_{23})$ it can be generated by $\sqrt[3]{5\cdot 23}$ and on $D(\gp_5)$ it can be generated by $5$. We transition between these two generators via $\sqrt[3]{5^2\cdot 23^2}/23$, exactly as above. Thus our twisted monogenerators correspond to a non-trivial ideal class.

A computer algebra system can compute a $K$-integral basis for $\Ints_L$:
\begin{align*}
&\bigg\{1, \left(-2\sqrt[3]{23\cdot 5}+\frac{5}{25}\left(\sqrt[3]{23\cdot 5}\right)^2\right)\sqrt[3]{23\rho_3}+\left(-3+\frac{3}{23}\sqrt[3]{23\cdot 5}\right)\left(\sqrt[3]{23\rho_3}\right)^2,\\
&\left(120589+5243\sqrt[3]{23\cdot 5} + \frac{5243}{23}\left(\sqrt[3]{23\cdot 5}\right)^2\right)\sqrt[3]{23\rho_3} \\ 
&\quad \quad \quad\quad \quad+\left(22850+\frac{57125}{23}\sqrt[3]{23\cdot 5}+1828\left(\sqrt[3]{23\cdot 5}\right)^2\right)\left(\sqrt[3]{23\rho_3}\right)^2 \bigg\},
\end{align*}
with index form: 
\begin{align*}
\localindex_{\ZZ_L/\ZZ_K}=&13796817 (\sqrt[3]{5\cdot 23})^2 b^3 - 1367479703949 (\sqrt[3]{5\cdot 23})^2 b^2 c \\
&+ 45179341009193328 (\sqrt[3]{5\cdot 23})^2 b c^2 + 67103709 \sqrt[3]{5\cdot 23} b^3 \\
& - 497537273719431009077 (\sqrt[3]{5\cdot 23})^2 c^3- 6650125342740 \sqrt[3]{5\cdot 23} b^2 c\\
&+ 219702478196413227 \sqrt[3]{5\cdot 23} b c^2 - 2419492830176044167763 \sqrt[3]{5\cdot 23} c^3 \\
&+326269891 b^3 - 32339923090800 b^2 c + 1068411032584717260 b c^2 \\
&-11765841517121285321908 c^3.
\end{align*}

Because $\ZZ_L/\ZZ_K$ is twisted monogenic, there are no common index divisors. Thus we will always find solutions to $\localindex_{\ZZ_L/\ZZ_K}$ when we reduce modulo a prime of $\ZZ_K$. We do not expect $\ZZ_L$ to be monogenic over $\ZZ_K$; however, showing that there are no values of $b,c\in\ZZ_K$ such that $\localindex_{\ZZ_L/\ZZ_K}(b,c)\in\ZZ_K^\ast$ appears to be rather difficult. 

A clever way to get around this issue would be to show that the different of $L/K$ was non-principal. This would preclude monogenicity by prohibiting an integral basis all together. Unfortunately, one can compute that the different is principal, so the extension does have a relative integral basis. 
\end{example}

\begin{remark}
One can perform the same construction of Example~\ref{ex:TwistedMonNotQuad} with radical cubic number rings other than $\QQ(\sqrt[3]{5 \cdot 23})$. Specifically, take any radical cubic where $(3)=\gp_3^3=(\alpha)^3$, $\ell$, and $q$ are distinct primes with $(\ell)=\gl^3$, $(q)=\gq^3$, and neither $\gl$ nor $\gq$ principal. The ideas behind this construction can be taken further by making appropriate modifications.
\end{remark}


\subsection{Twisting in general}\label{ssec:generaltwisted}

Throughout this section, fix notation as in Situation \ref{sit:gensetup_paper2} and work in the category $\Sch{S}$ of schemes over $S$ equipped with the \'etale topology. In particular, we allow $X$ to be any quasiprojective $S$-scheme.

Definition \ref{defn:twistedmon} readily generalizes. Replace $\GG_m$ by any \'etale sheaf of groups $G$ with a left action $G \action X$. A $G$-twisted monogenerator for $S' \to S$ (into $X$) is an \'etale cover $U_i \to S$, closed embeddings $\theta_i : S'_{U_i} \subseteq X_{U_i}$, and elements $g_{ij} \in G(U_{ij})$ such that
\[g_{ij}.\theta_j = \theta_i : S'_{U_{ij}} \to X_{U_{ij}}.\]
Say two $G$-twisted monogenerators $(\theta_i), (\eta_i)$ are \emph{equivalent} if, after passing to a common refinement of the associated covers, there is a global section $g \in G(S)$ so that $\theta_i = g|_{U_i} \cdot \eta_i$ for all $i$.
Equivalently, the $\theta_i$'s glue to a global closed embedding $S' \subseteq \hat X$ into a twisted form $\hat X$ of $X$ the same way the $\GG_m$-twisted monogenerators give embeddings into a line bundle. 

The twisted form $\hat X$ arises from transition functions in $G$, meaning there is a $G$-torsor $P$ such that $\hat X$ is the contracted product:
\[\hat X = X \wedge^G P \coloneqq   X \times P/(G, \Delta)\]
We have already seen the variant $G = \Afftrans^1$, $X = \Aff^1$. Other interesting cases include $G = \PGL_2 \action \PP^1$, $\GL_n \action \Aff^n$, an ellliptic curve $E$ acting on itself $E \action E$, etc. 

\begin{remark}

Usually, contracted products are defined for a left action $G \action P$ and a \emph{right} action $G \action X$ by quotienting by the antidiagonal action 
\[X \wedge^G P \coloneqq   X \times P/(-\Delta, G)\]
defined by 
\[G \action X \times P; \quad \quad \quad g.(x, p) \coloneqq   (x.g^{-1}, g.p).\]
We instead take two left actions and quotient by the diagonal action of $G$. The literature often turns left actions $\GG_m \action \Aff^1$ into right actions anyway, as in \cite[Remark 1.7]{breengerbes}. 

\end{remark}

Group sheaves $G$ beget stacks $BG = BG_S$ classifying $G$-torsors on $S$-schemes with universal $G$-torsor $S \to BG$.

Twisted forms $\hat X$ of $X$ are equivalent to torsors for $\AAut(X)$ (\cite[Theorem 4.5.2]{QPointsPoonen}), as follows: Given a twisted form $\hat X \to S$, we obtain the torsor $\IIsom(\hat X, X)$ of local isomorphisms. Given a $\AAut(X)$-torsor $P$, we define a twisted form via contracted product: 
\[\hat X_P \coloneqq   X \wedge^{\AAut(X)} P\]
The stack $B\AAut(X)$ is thereby a moduli space for twisted forms of $X$ with universal family $X \wedge_{B\AAut(X)}^{\AAut(X)} S = \stquot{X/\AAut(X)}$. An action $G \to \AAut(X)$ lets one turn a $G$-torsor $P$ into a twisted form 
\[\hat X_P \coloneqq   X \wedge^G P\] 
classified by the map $BG \to B\AAut(X)$.

The automorphism sheaf $\AAut(X)$ acts on the scheme $\Gen_X$ via postcomposition with the embeddings $S' \to X$, yielding a map of sheaves
\[
  \gamma: \AAut(X) \to \AAut(\Gen_X)
  \footnote{The map $\gamma$ need not be injective. Consider $S = \Spec k$ a geometric point, $S' = \bigsqcup^3 S$ the trivial $3$-sheeted cover, and $X = \bigsqcup^2 S$ only $2$-sheeted. There are no closed immersions $S' \subseteq X$, so $\Gen_X = \varnothing$ has global automorphisms $\Aut(\varnothing) \simeq \{ id \}$, but $\Aut(X) \simeq \ZZ/2\ZZ$ is nontrivial. Similar examples abound for non-monogenic $S' \to S$. }.
\]
Similarly, the automorphism sheaf $\AAut(S')$ acts on $\Gen_X$ on the right via precomposition:
\[
  \xi: \AAut(S')^{op} \to \AAut(\Gen_X).
\]

The induced map $H^1(S, \AAut(X)) \to H^1(S, \AAut(\Gen_X))$ sends a twisted form $\widehat{X}$ of $X$ to the twisted form $\Gen_{\widehat{X}}$ of $\Gen_X$ given by looking at closed embeddings into $\widehat{X}$. We package these twisted forms $\Gen_{\hat X}$ into a universal version: $\TMon$. 

\begin{definition}[$\TMon$]
Let $\TMon \to B\Aut(X)$ be the $B\Aut(X)$-stack whose $T$-points are given by 
\[\left\{\hspace{-.1 cm}\begin{tikzcd}
        &\TMon \ar[d]      \\
T \ar[r, "\widehat{X}", swap] \ar[ur, dashed]       &B \Aut(X)
\end{tikzcd}\hspace{-.1 cm}\right\} \hspace{-.05 cm}\coloneqq \hspace{-.05 cm}
\left\{\hspace{-.17 cm}\begin{tikzcd}
S' \times_S T \ar[dr] \ar[rr, "s", hook, dashed]       &       &\widehat{X} \ar[dl]       \\
        &T
\end{tikzcd} \middle|\, \, s \text{ is a closed immersion}\right\}.\]    
\end{definition}

The map $\TMon \to B \Aut(X)$ is representable by schemes, since pullbacks are twisted forms $\Gen_{\hat X}$ of $\Gen_X$ itself:
\[\begin{tikzcd}
\Gen_{\hat X} \ar[r] \ar[d] \pb        &S \ar[d, "\hat X"]       \\
\TMon \ar[r]        &B \AAut(X).
\end{tikzcd}\]

The universal torsor over $B\Aut(X)$ is $S$, but the universal twisted form is obtained by the contracted product with $X$ over $B \Aut(X)$: 
\[X \wedge^{\Aut(X)}_{B\Aut(X)} S \simeq \stquot{X/\Aut(X)}.\]
One can exhibit $\TMon$ as an open substack of the Weil Restriction of $\stquot{X/\Aut(X)} \to B\Aut(X)$ as in \cite[Proposition 2.3]{abhs_paper_1}. There is a universal closed embedding over $\TMon$ into the universal twisted form of $X$ as in the definition of $\Gen_{S'/S}$:
\[\begin{tikzcd}
\TMon \times_S S' \ar[rr, dashed, "u", hook] \ar[dr]       &       &\stquot{X/\Aut(X)}. \ar[dl]     \\
        &\TMon
\end{tikzcd}\]

The universal case is concise to describe but unwieldy because $\AAut(X)$ need not be finite, smooth, or well-behaved in any sense. We simplify by specifying our twisted form $\widehat{X} \to T$ to get a scheme $\TMon_{\widehat{X}} = T \times_{\widehat{X}, B\Aut(X)} \TMon$ or by specifying the structure group $G$. 

Fixing the structure group $G$ requires $\widehat{X} = X \wedge^G P$ for the specified sheaf of groups $G$ and some $G$-torsor $P$. These $G$-twisted forms are parameterized by the pullback
\[\begin{tikzcd}
\TMon^G  \ar[r] \ar[d] \pb        &\TMon \ar[d]      \\
BG \ar[r]      &B\Aut(X).
\end{tikzcd}\]
The fibers of $\TMon^G \to BG$ over maps $T \to BG$ are again twisted forms of $\Gen_X$. If $X = \Aff^k$ and $\widehat{X}$ is a $G$-twisted form, we may refer to the existence of sections of $\TMon_{\widehat{X}}$ by saying $S'/S$ is $G$-twisted $k$-genic, etc.

\begin{remark}
Trivializing $\hat X$ is not the same as trivializing the torsor $P$ that induces $\hat X$ unless the group $G$ is $\Aut(X)$ itself. For example, take the trivial action $G \action X$.
\end{remark}

\begin{theorem}\label{thm:twistedisquotient}
The stack of twisted monogenerators $\TMon$ is isomorphic to $\stquot{\Gen_X/\AAut(X)}$ over $B\AAut(X)$. More generally, for any sheaf of groups $G \action X$, we have an isomorphism $\TMon^G \simeq \stquot{\Gen_X/G}$ over $BG$. 
\end{theorem}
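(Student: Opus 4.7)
The plan is to construct mutually inverse equivalences of fibered categories between $\TMon^G$ and $\stquot{\Gen_X/G}$, both regarded over $BG$, and then obtain the statement for $\AAut(X)$ as the special case $G = \AAut(X)$. Unraveling: a $T$-point of $\stquot{\Gen_X/G}$ is a $G$-torsor $P \to T$ together with a $G$-equivariant $S$-morphism $\varphi : P \to \Gen_X$, where $G$ acts on $\Gen_X$ by postcomposition through $G \to \AAut(X) \to \AAut(\Gen_X)$. A $T$-point of $\TMon^G$ lying over the classifying map $T \to BG$ of a torsor $P$ is a closed immersion $s : S' \times_S T \hookrightarrow \widehat{X}$, where $\widehat{X} = X \wedge^G P$. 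Both projections to $BG$ are ``forget everything but $P$,'' so any equivalence preserving torsors automatically lives over $BG$.

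I would define a forward functor $\Phi : \stquot{\Gen_X/G} \to \TMon^G$ as follows. By the defining universal property of $\Gen_{X}$ applied to the $S$-scheme $P$, an $S$-morphism $\varphi : P \to \Gen_X$ is the same datum as a closed immersion $\widetilde{s} : S' \times_S P \hookrightarrow X \times_S P$ over $P$. A short check shows that $G$-equivariance of $\varphi$ translates exactly to $G$-equivariance of $\widetilde{s}$, where $G$ acts on $S' \times_S P$ trivially on the first factor and by the torsor action on the second, and on $X \times_S P$ via the diagonal of the $G \to \AAut(X)$ action and the torsor action on $P$. Thus $\widetilde{s}$ is an equivariant morphism above the $G$-torsor $P \to T$, and by fpqc descent for closed immersions \cite[Tag 02L4]{sta} it descends uniquely to a closed immersion $s : S' \times_S T \hookrightarrow (X \times_S P)/G = \widehat{X}$, which I take as $\Phi(P, \varphi)$.

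For the inverse $\Psi : \TMon^G \to \stquot{\Gen_X/G}$, given $(\widehat{X}, s)$, pulling back $s$ along the $G$-torsor $P \to T$ produces a closed immersion $S' \times_S P \hookrightarrow \widehat{X} \times_T P$, which by the canonical trivialization $\widehat{X} \times_T P \simeq X \times_S P$ recovers $\widetilde{s}$, and hence the desired $\varphi : P \to \Gen_X$; its $G$-equivariance is precisely the descent datum making $\widetilde{s}$ glue to $s$. Verifying that $\Phi$ and $\Psi$ are mutually inverse (on objects and morphisms) and 2-functorial is then a matter of unwinding: descent and its inverse uniquely determine one another, and the torsor is preserved on the nose in both directions, so the identification is compatible with projection to $BG$. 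The step I expect to be the main obstacle is the bookkeeping for the two natural $G$-actions appearing on $X \times_S P$ and the torsor action on $P$, and confirming that equivariance of $\varphi$ aligns with the descent datum used to produce $s$; once that is cleanly set up, the rest is a formal consequence of descent for closed immersions.
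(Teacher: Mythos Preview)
Your proposal is correct and follows essentially the same approach as the paper's proof: both arguments identify a $T$-point of $\TMon^G$ with a $G$-equivariant closed immersion $S'\times_S P \hookrightarrow X \times_S P$ over $P$ (the paper obtains this by pulling back the closed immersion along $X_T \times_T P \to \widehat{X}$ and recognizing the pullback as $P \times_T T' \simeq S' \times_S P$), and then translate this via the universal property of $\Gen_X$ into an equivariant map $P \to \Gen_X$. Your write-up is slightly more explicit about the descent step and the bookkeeping of the two $G$-actions, but the content is the same.
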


\begin{proof}

Address the second, more general assertion and let $T$ be an $S$-scheme. Write $X_T \coloneqq X \times_S T$, $T' \coloneqq  T \times_S S'$, etc. A $T$-point of $\TMon^G$ is a $G$-torsor $P \to T$ and a solid diagram 
\[\begin{tikzcd}
P' \ar[d, dashed] \ar[rr, dashed] \pb      &&X_T \times_T P \ar[d, "{/\Delta, G}"]        \\
T' \ar[rr] \ar[dr]      &\phantom{a}&X_T \wedge^G_T P,  \ar[dl]     \\
        &T
\end{tikzcd}\]
with $T' \to X_T \wedge_T^G P$ a closed immersion. Form $P'$ by pullback: $P'$ is a left $G$-torsor with an equivariant map to $X_T \times_T P$ with the diagonal action.
The map $P'\to X_T\times_T P$ entails a pair of equivariant maps $P' \to P$ and $P' \to X_T$. The map $P' \to P$ over $T$ forces $P' \simeq P \times_T T'$. These data form an equivariant map $P \to \Gen_X$ over $T$, or $T \to \stquot{\Gen_X/G}$. Reverse the process to finish the proof. 
\end{proof}

To see this theorem in practice, we have the following example.

\begin{definition}\label{def:affanddaff}
The group sheaf $\Afftrans^k \subseteq \AAut(\Aff^k)$ of \emph{affine transformations} is the set of functions
\[
  \vec{x} \mapsto M \vec{x} + \vec{b}
\]
where $M \in \GL_k$ and $\vec{b} \in \Aff^k$, under composition. Note $\Afftrans^k \cong \Aff^k \rtimes \GL_k$. 
\end{definition}

\begin{example}
Let $S'$ be the spectrum of a quadratic order such as $\ZZ[i]$, and take $S = \Spec \ZZ$. The space of $k$-generators is $\Gen_{k,S'/S} = \Aff^k \times (\Aff^k \setminus 0)$ according to \cite[Proposition 4.5]{abhs_paper_1}. Take the quotient by the groups of affine transformations:
\[[\Gen_{k,S'/S}/\Aff^k \rtimes \GG_m] = \PP^{k-1}, \quad \quad \quad [\Gen_{k,S'/S}/\Afftrans^k] = [\Aff^k/\GL_k] = [\PP^{k-1}/\PGL_k].\]

These quotients represent twisted monogenerators according to Theorem \ref{thm:twistedisquotient}. The corresponding $\PGL_k$-torsors were classically
identified with Azumaya algebras and Severi-Brauer varieties (see the exposition in \cite{severibrauerkollar}) or twisted forms of $\PP^{k-1}$. These yield classes in the Brauer group $H^2(\GG_m)$ via the connecting homomorphism from
\[1 \to \GG_m \to \GL_k \to \PGL_k \to 1.\]

The same holds locally for any degree-two extension $S' \to S$ with $S$ integral using \cite[Proposition 4.5]{abhs_paper_1}.  
\end{example}

If $G$ is an abelian variety over a number field $S = \Spec K$, let $P \to S$ be a $G$-torsor inducing a twisted form $\hat X$ of $X$. Given a twisted monogenerator $\theta : S' \subseteq \hat{X}$, one can try to promote $\theta$ to a global monogenerator by trivializing $P$ and thus $X$. 

Suppose one is given trivializations of $P$ over the completions $K_\nu$ at each place. Whether these glue to a global trivialization of $P$ over $K$ and thus a monogenerator $S' \subseteq X$ is governed by the Shafarevich-Tate group $\Sha(G/K)$. 

Given a $G$-twisted monogenerator with local trivializations, the Shafarevich-Tate group obstructs lifts of $\theta$ to a global monogenerator the same way classes of line bundles in $\Pic$ obstruct $\GG_m$-twisted monogenerators from being global monogenerators. Theorem \ref{thm:twistedClassNumb1} showed a converse---nontrivial elements of $\Pic$ imply twisted monogenerators that are not global monogenerators. 
\begin{question}
Is the same true for $\Sha$? Does every element of the Shafarevich-Tate group arise this way? 

\end{question}

The Shafarevich-Tate group approach is useless for $G = \GG_m$ or $\GL_n$ because of Hilbert's Theorem 90 \cite[03P8]{sta}: 
\[H^1_{et}(\Spec K, G) = H^1_{Zar}(\Spec K, G) = 0.\]
The same goes for any ``special'' group with \'etale and Zariski cohomology identified. The strategy may work better for $\PGL_n$ or elliptic curves $E$.

\begin{remark}

This section defined $G$-twisted monogenerators using covers in the \'etale topology, whereas Definition \ref{defn:twistedmon} used the Zariski topology. For $G = \GG_m$ or $\Afftrans^1$, either topology gives the same notion of twisted monogenerators. Observe that $\GG_m$ has the same Zariski and \'etale cohomology by Hilbert's Theorem 90. The same is true for $\Aff^1$ by \cite[03P2]{sta} and so also $\Afftrans^1 = \GG_m \ltimes \Aff^1$. 

\end{remark}


\section{Examples of the scheme of monogenerators}\label{sec:examples}

We conclude with several examples to further illustrate the interaction of the various forms of monogenicity considered in this paper. 
We will make frequent reference to computation of the index form using the techniques of the previous paper in this series. Some of these examples were already considered in the previous paper, but are revisited in order to add some commentary on their relationship to notions of local monogenicity.

\subsection{Orders in number rings}

\begin{example}[Dedekind's Non-Monogenic Cubic Field]\label{ex:Dedekind_paper2}
Let $\eta$ denote a root of the polynomial $X^3 - X^2 - 2X -8$ and consider the field extension $L\coloneqq  \QQ(\eta)$ over $K\coloneqq \QQ$. When Dedekind constructed this example \cite{Dedekind} it was the first example of a non-monogenic extension of number rings. Indeed two generators are necessary to generate $\Ints_L/\Ints_K$: take $\eta^2$ and $\frac{\eta + \eta^2}{2}$, for example. In fact, $\{1,\frac{\eta + \eta^2}{2},\eta^2\}$ is a $\ZZ$-basis for $\Ints_K$. 
The matrix of coefficients with respect to the basis $\{1,\frac{\eta + \eta^2}{2},\eta^2\}$ is

\[\begin{bmatrix}
1 & a & a^{2} + 6b^{2} + 16bc + 8 c^{2} \\
0 & b & 2 a b + 7 b^{2} + 24bc + 20c^{2} \\
0 & c & -2b^{2} + 2ac - 8bc - 7c^{2}
\end{bmatrix}.\]

Taking its determinant, the index form associated to this basis is \[
 -2b^3 - 15b^2c - 31bc^2 - 20c^3
.\]
Were the extension monogenic, we would be able to find $a,b,c \in \ZZ$ so
that the index form above is equal to $\pm 1$.

In fact, $\ZZ_{\QQ(\eta)}/\ZZ$ is not even locally monogenic. By Lemma \ref{prop:mono_over_points_index_form}, we may check by reducing at primes. Over the prime $2$ the index form reduces to
\[
  b^2c + bc^2,
\]
and iterating through the four possible values of $(b,c) \in (\ZZ/2\ZZ)^2$ shows that the index form always to reduces to 0. That is, $2$
is a common index divisor.

Dedekind showed that $\ZZ_{\QQ(\eta)}/\ZZ$ is non-monogenic, not by using an index form, but by deriving a contradiction from the $\ZZ_L$-factorization of the ideal 2, which splits into three primes. In our terms, $\Spec \ZZ_{\QQ(\eta)} \to \Spec \ZZ$ has three points over $\Spec \mathbb{F}_2$, all with residue field $\mathbb{F}_2$. Therefore, condition \eqref{artinmono2} of Theorem \ref{thm:artin_monogenicity} for monogenicity at the prime $(2)$ fails, so $S' \to S$ is not monogenic.
\end{example}

With base extension, one can eventually resolve the obstructions presented by common index divisors. The following example illustrates a non-maximal order where we have a slightly different obstruction to monogenicity.

\begin{example}[An order that fails to be monogenic over geometric points]\label{ex:NonMonOrderInMonogenicExt_paper2}

Consider the extension $\ZZ[\sqrt{2}, \sqrt{3}]$ of $\ZZ$. (This is not the maximal order of $\QQ(\sqrt{2},\sqrt{3})$.) We recall from \cite[Example 4.13]{abhs_paper_1} that the index form with respect to the basis $\{ 1, \sqrt{2}, \sqrt{3}, \sqrt{6} \}$ is
\[	
=-4(2b^2 - 3c^2)(b^2 - 3d^2)(c^2 - 2d^2).
\]

By Proposition \ref{thm:monogenic_over_geometric_points},
$S' \to S$ fails to be monogenic, even over geometric points, since the index form reduces to 0 in the fiber over 2. In terms of Theorem \ref{thm:local_artin_monogenicity}, $S' \to S$ fails to be monogenic since the fiber over $2$ consists of a single point with a two dimensional tangent space.
\end{example}

We can contrast the above examples with the following example where the obstruction to monogenicity is global as opposed to local.

\begin{example}[A Zariski-locally monogenic, but not twisted-monogenic extension]\label{ex:2genicOverZ_paper2}
Here we take a closer look at one member of the family in Example \ref{ex: NoCommonIndexNotMono}. Let $K = \QQ$, $L = K(\sqrt[3]{5^2\cdot7})$. The ring of integers $\Ints_L = \ZZ[\sqrt[3]{5^2\cdot 7}, \sqrt[3]{5\cdot 7^2}]$ is not monogenic over $\ZZ$. Let $\alpha= \sqrt[3]{5^2\cdot 7}$, $\beta = \sqrt[3]{5\cdot 7^2}$. We recall from \cite[Example 4.14]{abhs_paper_1} that $\{1, \alpha, \beta\}$ is a $\ZZ$-basis for $\ZZ_L$ and the associated index form is $5b^3 - 7c^3$. Thus, for a given choice of $a,b,c \in \ZZ$, the primes which divide the value $5b^3 - 7c^3$ are precisely the primes at which $a + b\alpha + c\beta$ will fail to
generate the extension.

The values obtained by this index form are $\{0,\pm5\}$ modulo $7$. Since 5 is a unit in $\ZZ/7\ZZ$, we do have local monogenerators over $D(7)$. Similarly, we have local monogenerators over $D(5)$. Together, $D(5)$ and $D(7)$ form an open cover, and we see that this extension is Zariski-locally monogenic. However, as we can see by reducing modulo 7, the index form cannot be equal to $\pm 1$. Therefore there are no global monogenerators and $\ZZ_L/\ZZ$ is not monogenic: in the language of \cite{AlpogeBhargavaShnidman}, $\ZZ_L/\ZZ$ has a local obstruction to monogenicity, despite being locally monogenic.

This is not a $\mathbb{G}_m$-twisted monogenic extension by Theorem \ref{thm:twistedClassNumb1}, because $h(\ZZ) = 1$ and this extension is not globally monogenic. See Example \ref{ex:TwistedMonNotQuad} for a properly $\GG_m$-twisted monogenic extension and a comparison to the example presented here.
\end{example}

\subsection{Maps of curves}

One benefit of our more geometric notion of monogenicity is it allows us readily ask questions about monogenicity in classical geometric situations with the same language that we use in the arithmetic context.
Our next examples concern the case that $S' \to S$ is a finite map of algebraic curves, which is
essentially never monogenic. On the other hand, we find explicit examples of $\GG_m$-twisted monogenic $S' \to S$. Theorem \ref{thm:twisted_mono_triangular_power} constrains the possible line bundles that we may use to show $\GG_m$-twisted monogenicity. We make this precise in the lemma below.

\begin{lemma} \label{lem:twisted_monogenic_curve_degrees}
Let $\pi : C \to D$ be a finite map of smooth projective curves of degree $n$ and let $g$ denote the genus.
\begin{enumerate}
  \item\label{monoCD} $\pi$ is only monogenic if it is the identity map;
  \item\label{twistedmonoCD} If $\pi$ is $\GG_m$-twisted monogenic, then $1 - g(C) - n(1 - g(D))$ is divisible by $\frac{1}{2}n(n-1)$ in $\ZZ$. Moreover, if $\pi$ factors through a closed embedding into a line bundle $E$ with sheaf of sections $\mathcal{E}$, then
  \[
    \deg(\mathcal{E}) = - \frac{1 - g(C) - n(1 - g(D))}{\frac{1}{2}n(n-1)}.
  \]
\end{enumerate}

\end{lemma}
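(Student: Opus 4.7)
For part \eqref{monoCD}, the plan is to exploit that a monogenerator for $\pi$ is simply a global function $\theta \in \Gamma(C, \OO_C)$ such that the map $(\theta, \pi) : C \to \Aff^1 \times D$ is a closed immersion. Assuming $C$ connected (otherwise argue componentwise), $C$ projective forces $\Gamma(C, \OO_C)$ to be the base field, so $\theta$ must be a constant. Then the image of $(\theta,\pi)$ is contained in the slice $\{\theta\} \times D \cong D$, so $(\theta,\pi)$ is a closed immersion if and only if $\pi$ is; since $\pi$ is finite of degree $n$, it is a closed immersion only when $n = 1$, i.e.\ $\pi$ is an isomorphism.

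For part \eqref{twistedmonoCD}, the strategy is to apply Theorem \ref{thm:twisted_mono_triangular_power} and then translate the resulting identity in $\Pic(D)$ into an identity of degrees. That theorem gives
\[
  \det(\pi_*\OO_C) \cong \mathcal{E}^{-\frac{n(n-1)}{2}}
\]
in $\Pic(D)$, so taking degrees yields
\[
  \deg\bigl(\det \pi_*\OO_C\bigr) \;=\; -\tfrac{n(n-1)}{2}\, \deg \mathcal{E}.
\]

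The remaining task is to compute the left-hand side in terms of the genera. The plan is to use $\deg(\det \pi_*\OO_C) = \deg(\pi_*\OO_C)$ together with the identity $\chi(C,\OO_C) = \chi(D, \pi_*\OO_C)$ (since $\pi$ is finite, hence affine) and Riemann--Roch on $D$ for the rank-$n$ bundle $\pi_*\OO_C$:
\[
  \chi(D, \pi_*\OO_C) \;=\; \deg(\pi_*\OO_C) + n\bigl(1 - g(D)\bigr).
\]
Combined with $\chi(C, \OO_C) = 1 - g(C)$, this gives
\[
  \deg(\pi_*\OO_C) \;=\; 1 - g(C) - n\bigl(1 - g(D)\bigr).
\]
Substituting into the previous identity and solving for $\deg \mathcal{E}$ produces the claimed formula, and the divisibility statement is then automatic from the fact that $\deg \mathcal{E} \in \ZZ$.

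None of the steps looks like a serious obstacle: part \eqref{monoCD} is essentially the observation that projective varieties have no non-constant maps to $\Aff^1$, and part \eqref{twistedmonoCD} is a mechanical application of Theorem \ref{thm:twisted_mono_triangular_power} plus Riemann--Roch. The only mild care required is ensuring the identity $\deg(\det \pi_*\OO_C) = \deg(\pi_*\OO_C)$ and the vanishing of higher pushforwards (so that $\chi(C,\OO_C) = \chi(D, \pi_*\OO_C)$), both of which are standard for finite morphisms.
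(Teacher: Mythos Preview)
Your proposal is correct and follows essentially the same approach as the paper's proof. For part \eqref{monoCD} both arguments use that global sections of $\OO_C$ on a proper variety are constants; for part \eqref{twistedmonoCD} both apply Theorem \ref{thm:twisted_mono_triangular_power} and then compute $\deg(\det \pi_*\OO_C)$ via Riemann--Roch --- the paper simply cites \cite[0AYQ]{sta} for the identity $\deg(\det \pi_*\OO_C) = 1 - g(C) - n(1 - g(D))$, which is exactly the Euler-characteristic computation you spell out.
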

\begin{proof}
To see \eqref{monoCD}, note that a map $f : C \to \AA^1_D$ is determined by a global section of $\OO_C$. Since $C$ is a proper variety, the global sections of $\OO_C$ are constant functions. It follows that a map $f : C \to \AA^1_D$ is constant on fibers of $\pi$. Therefore $f$ cannot be an immersion unless $\pi$ has degree 1, i.e., is the identity.

Suppose $\pi : C \to D$ is $\GG_m$-twisted monogenic with an embedding
into a line bundle $E$. By \cite[0AYQ]{sta} and Riemann-Roch,
\[
  \deg(\det(\pi_*\OO_C)) = 1 - g(C) - n(1 - g(D)).
\]
By Theorem \ref{thm:twisted_mono_triangular_power},
\[
  \det(\pi_*\OO_{S'}) \simeq \mathcal{E}^{-\frac{n(n-1)}{2}}
\]
where $\mathcal{E}$ is the sheaf of sections of $E$. Taking degrees
of both sides,
\[
  1 - g(C) - n(1 - g(D)) = -\deg(\mathcal{E})\cdot\frac{n(n-1)}{2}.
\]
This shows \eqref{twistedmonoCD}.
\end{proof}

First, we will investigate one of the most basic families of maps of curves.

\begin{example}[Maps $\PP^1 \to \PP^1$]
Let $k$ be an algebraically closed field and let $\pi : \mathbb{P}^1_k \to \mathbb{P}^1_k$ be a finite map of degree $n$. If $n = 1$, then $\pi$ is trivially monogenic.
When $n = 2$, Lemma \ref{lem:twisted_monogenic_curve_degrees} tells us that $\pi$ cannot be monogenic, while Lemma \ref{lem:deg2_is_twisted_mono} tells us that $\pi$ is $\Afftrans^1$-twisted monogenic.
Lemma \ref{lem:twisted_monogenic_curve_degrees} tells us that for degrees $n > 2$ the map $\pi$ is neither monogenic nor $\GG_m$-twisted monogenic, although Theorems \ref{thm:locally_mono_is_mono_over_points} and \ref{thm:artin_monogenicity} tell us that $\pi$ is Zariski-locally monogenic. 

Working with $\ZZ$ instead of an algebraically closed field, consider the map $\pi : S' = \PP^1_{\ZZ} \to S = \PP^1_{\ZZ}$ given by $[a : b] \mapsto [a^2 : b^2]$. We will show by direct computation that this map is $\GG_m$-twisted monogenic. Write $U = \Spec \ZZ[x]$ and $V = \Spec \ZZ[y]$ for the standard affine charts of the target $\PP^1$. The map $\pi$ is then given
on charts by
\begin{align*}
  \ZZ[x] \to &\ZZ[a] \\
    x  \mapsto & \, a^2
\end{align*}
and
\begin{align*}
  \ZZ[y] \to &\ZZ[b] \\
    y  \mapsto & \, b^2.
\end{align*}

Let us compute $\Gen_{1,S'/S}$. Over $U$, $\pi_* \OO_{\PP^1}$ has $\ZZ[x]$-basis $\{ 1, a \}$. Let $c_1, c_2$ be the coordinates of $\WR|_U = \Aff^2$, with universal map 
\[\ZZ[c_1, c_2, t] \to \ZZ[c_1, c_2, a], \quad \quad \quad t \mapsto c_1 + c_2 a.\]

The index form associated to this basis is
\[
  \localindex(c_1, c_2) = c_2.
\]
Similarly, $\pi_*\OO_{\PP^1}$ has $\ZZ[y]$-basis $\{1, b\}$, $\WR|_V$ analogous coordinates $d_1, d_2$, and the index form associated to this basis is
\[
  \localindex(d_1, d_2) = d_2.
\]
An element of $\ZZ[x]$ (resp. $\ZZ[y]$) is a unit if and only if it is $\pm 1$,
so
\begin{align*}
  \Gen_{S'/S}(U) &= \{ c_1 \pm a \mid c_1 \in \ZZ[x] \} \\
  \Gen_{S'/S}(V) &= \{ d_1 \pm b \mid d_1 \in \ZZ[y] \}.
\end{align*}
We can see directly that $\pi$ is not monogenic: the condition that a monogenerator $c_1 \pm a$ on $U$ glue with a monogenerator $d_1 \pm b$ on $V$ is that
\[
  (c_1 \pm a)|_{U \cap V} = (d_1 \pm b)|_{U \cap V}.
\]
But this is impossible to satisfy since $a|_{U \cap V} = b|_{U \cap V}^{-1}$.

Lemma \ref{lem:twisted_monogenic_curve_degrees} tells us that if $S' \to S$ is twisted monogenic, the line bundle into which $S'$ embeds must have degree 1. Let us therefore attempt to embed $S'$ into the line bundle with sheaf of sections $\OO_{\PP^1}(1)$. The sheaf $\OO_{\PP^1}(1)$ restricts to the trivial line bundle on both $U$ and $V$, and a section $f \in \OO_U$ glues to a section $g \in \OO_V$ if
\[
  y \cdot f|_{U \cap V} = g|_{U \cap V}.
\]
Embedding $S'$ into this line bundle is therefore equivalent to finding a monogenerator $c_1 \pm a$ on $U$, and a monogenerator $d_1 \pm b$ on $V$ such that
\[
  y \big((c_1 \pm a)|_{U \cap V}\big) = (d_1 \pm b)|_{U \cap V}.
\]
Bearing in mind that $y = b^2 = a^{-2}$ on $U \cap V$, we find a solution by taking positive signs, $c_1 = 0$, and $d_1 = 0$. Therefore $\pi: \PP^1_\ZZ \to \PP^1_\ZZ$ is twisted monogenic.

\end{example}

Lemma \ref{lem:twisted_monogenic_curve_degrees} tells us that we must pass to higher genus to find a $\GG_m$-twisted monogenic cover of $\PP^1$ of degree greater than $2$. Here is an example where the source is an elliptic curve.

\begin{example}[Twisted monogenic cover of degree 3]
Let $E$ be the Fermat elliptic curve $V(x^3 + y^3 - z^3) \subset \mathbb{P}^2_\ZZ$. Consider the projection from $[0 : 0 : 1]$, i.e.,
the map $\pi : E \to \PP^1$ defined by $[x : y : z] \mapsto [x : y]$. Write $U = \Spec \ZZ[x]$ and $V = \Spec \ZZ[y]$ for the standard affine charts
of $\PP^1$.
The map is given on charts by $\ZZ[x] \to \ZZ[x, z]/(x^3 + 1 - z^3)$
and $\ZZ[y] \to \ZZ[y,yz]/(1 + y^3 - (yz)^3)$. The gluing on overlaps is given
by $x \mapsto y^{-1}$ on $\PP^1$ and by $x \mapsto y^{-1}, z \mapsto z$ on $E$.

We now compute $\Gen_{E/\PP^1}$. Note that over $U$, $\OO_E$ has the $\ZZ[x]$-basis $1, z, z^2$. 
The index form
associated to $1, z, z^2$ is
\[
  \localindex(c_1, c_2, c_3) = c_2^3 - c_3^3(x^3 + 1).
\]

Over $V$, $\OO_E$ has the $\ZZ[x]$-basis $1, yz, y^2z^2$. The index form
associated to this basis is
\[
  \localindex(d_1, d_2, d_3) = d_2^3 - d_3^3(y^3 + 1).
\]

An element of $\ZZ[x]$ or $\ZZ[y]$ is a unit if and only if it is $\pm 1$. This implies that
\begin{align*}
  \Gen_{S'/S}(U) &= \{ c_1 \pm z \mid c_1 \in \ZZ[x] \} \\
  \Gen_{S'/S}(V) &= \{ d_1 \pm yz \mid d_1 \in \ZZ[y] \}.
\end{align*}
We see that there are no global sections of $\Gen_{S'/S}$, since coefficients of $z$
cannot match on overlaps.

However, if we twist so that we are considering embeddings of $E$ into $\OO_{\PP^1}(1)$,
then the condition for a monogenerator $c_1 \pm z$ on $U$
to glue with a monogenerator $d_1 \pm yz$ on $V$ is that
\[
  y\big((c_1 \pm z)|_{U \cap V}\big) = (d_1 \pm yz)|_{U \cap V}. 
\]
This is satisfied, for example by taking the positive sign for both generators and $c_1 = d_1 = 0$. Therefore $E \to \PP^1$ is twisted monogenic
with class $1 \in \Pic(\PP^1_{\ZZ})$.
\end{example}

\bibliographystyle{amsalpha}
\bibliography{zbib}

\end{document}